\newcommand{\bbR}{\mathbb{R}}
\newcommand{\bbA}{\mathbb{A}}
\newcommand{\bbB}{\mathbb{B}}
\newcommand{\inner}[3][]{\ensuremath{\left\langle #2, \; #3 \right\rangle_{#1}}}
\newcommand{\bilprod}[2]{\left\langle \left\langle \, #1, #2 \, \right\rangle \right\rangle}
\newcommand{\where}{\qquad \text{where} \qquad}
\newcommand*{\norm}[1]{\ensuremath{\left\|#1\right\|}}
\DeclareMathOperator*{\grad}{grad}
\DeclareMathOperator*{\Grad}{Grad}
\DeclareMathOperator*{\Div}{Div}
\renewcommand{\div}{\operatorname{div}}
\DeclareMathOperator{\Tr}{Tr}
\DeclareMathOperator*{\esssup}{ess\,sup}
\renewcommand\d[1]{\ensuremath{%
		\;\mathrm{d}#1\@ifnextchar\d{\!}{}}}
\def\onedot{$\mathsurround0pt\ldotp$}
\def\cddot{
	\mathbin{\vcenter{\baselineskip.67ex
			\hbox{\onedot}\hbox{\onedot}}%
}}
\newtheorem{assumption}{Assumption}
\newtheorem{conjecture}{Conjecture}
\newtheorem{remark}{Remark}
\newtheorem{proposition}{Proposition}
\newtheorem{definition}{Definition}
\newtheorem{proof}{Proof}
    \definecolor{urlcolor}{rgb}{0,.145,.698}
    \definecolor{linkcolor}{rgb}{.71,0.21,0.01}
    \definecolor{citecolor}{rgb}{.12,.54,.11}
    \definecolor{ansi-black}{HTML}{3E424D}
    \definecolor{ansi-black-intense}{HTML}{282C36}
    \definecolor{ansi-red}{HTML}{E75C58}
    \definecolor{ansi-red-intense}{HTML}{B22B31}
    \definecolor{ansi-green}{HTML}{00A250}
    \definecolor{ansi-green-intense}{HTML}{007427}
    \definecolor{ansi-yellow}{HTML}{DDB62B}
    \definecolor{ansi-yellow-intense}{HTML}{B27D12}
    \definecolor{ansi-blue}{HTML}{208FFB}
    \definecolor{ansi-blue-intense}{HTML}{0065CA}
    \definecolor{ansi-magenta}{HTML}{D160C4}
    \definecolor{ansi-magenta-intense}{HTML}{A03196}
    \definecolor{ansi-cyan}{HTML}{60C6C8}
    \definecolor{ansi-cyan-intense}{HTML}{258F8F}
    \definecolor{ansi-white}{HTML}{C5C1B4}
    \definecolor{ansi-white-intense}{HTML}{A1A6B2}
    \definecolor{ansi-default-inverse-fg}{HTML}{FFFFFF}
    \definecolor{ansi-default-inverse-bg}{HTML}{000000}
    \let\Oldtex\TeX
    \let\Oldlatex\LaTeX
    \renewcommand{\TeX}{\textrm{\Oldtex}}
    \renewcommand{\LaTeX}{\textrm{\Oldlatex}}
\def\PY@reset{\let\PY@it=\relax \let\PY@bf=\relax%
    \let\PY@ul=\relax \let\PY@tc=\relax%
    \let\PY@bc=\relax \let\PY@ff=\relax}
\def\PY@tok#1{\csname PY@tok@#1\endcsname}
\def\PY@toks#1+{\ifx\relax#1\empty\else%
    \PY@tok{#1}\expandafter\PY@toks\fi}
\def\PY@do#1{\PY@bc{\PY@tc{\PY@ul{%
    \PY@it{\PY@bf{\PY@ff{#1}}}}}}}
\def\PY#1#2{\PY@reset\PY@toks#1+\relax+\PY@do{#2}}
\def\csname PY@tok@w\endcsname{\def\PY@tc##1{\textcolor[rgb]{0.73,0.73,0.73}{##1}}}
\def\csname PY@tok@c\endcsname{\let\PY@it=\textit\def\PY@tc##1{\textcolor[rgb]{0.25,0.50,0.50}{##1}}}
\def\csname PY@tok@cp\endcsname{\def\PY@tc##1{\textcolor[rgb]{0.74,0.48,0.00}{##1}}}
\def\csname PY@tok@k\endcsname{\let\PY@bf=\textbf\def\PY@tc##1{\textcolor[rgb]{0.00,0.50,0.00}{##1}}}
\def\csname PY@tok@kp\endcsname{\def\PY@tc##1{\textcolor[rgb]{0.00,0.50,0.00}{##1}}}
\def\csname PY@tok@kt\endcsname{\def\PY@tc##1{\textcolor[rgb]{0.69,0.00,0.25}{##1}}}
\def\csname PY@tok@o\endcsname{\def\PY@tc##1{\textcolor[rgb]{0.40,0.40,0.40}{##1}}}
\def\csname PY@tok@ow\endcsname{\let\PY@bf=\textbf\def\PY@tc##1{\textcolor[rgb]{0.67,0.13,1.00}{##1}}}
\def\csname PY@tok@nb\endcsname{\def\PY@tc##1{\textcolor[rgb]{0.00,0.50,0.00}{##1}}}
\def\csname PY@tok@nf\endcsname{\def\PY@tc##1{\textcolor[rgb]{0.00,0.00,1.00}{##1}}}
\def\csname PY@tok@nc\endcsname{\let\PY@bf=\textbf\def\PY@tc##1{\textcolor[rgb]{0.00,0.00,1.00}{##1}}}
\def\csname PY@tok@nn\endcsname{\let\PY@bf=\textbf\def\PY@tc##1{\textcolor[rgb]{0.00,0.00,1.00}{##1}}}
\def\csname PY@tok@ne\endcsname{\let\PY@bf=\textbf\def\PY@tc##1{\textcolor[rgb]{0.82,0.25,0.23}{##1}}}
\def\csname PY@tok@nv\endcsname{\def\PY@tc##1{\textcolor[rgb]{0.10,0.09,0.49}{##1}}}
\def\csname PY@tok@no\endcsname{\def\PY@tc##1{\textcolor[rgb]{0.53,0.00,0.00}{##1}}}
\def\csname PY@tok@nl\endcsname{\def\PY@tc##1{\textcolor[rgb]{0.63,0.63,0.00}{##1}}}
\def\csname PY@tok@ni\endcsname{\let\PY@bf=\textbf\def\PY@tc##1{\textcolor[rgb]{0.60,0.60,0.60}{##1}}}
\def\csname PY@tok@na\endcsname{\def\PY@tc##1{\textcolor[rgb]{0.49,0.56,0.16}{##1}}}
\def\csname PY@tok@nt\endcsname{\let\PY@bf=\textbf\def\PY@tc##1{\textcolor[rgb]{0.00,0.50,0.00}{##1}}}
\def\csname PY@tok@nd\endcsname{\def\PY@tc##1{\textcolor[rgb]{0.67,0.13,1.00}{##1}}}
\def\csname PY@tok@s\endcsname{\def\PY@tc##1{\textcolor[rgb]{0.73,0.13,0.13}{##1}}}
\def\csname PY@tok@sd\endcsname{\let\PY@it=\textit\def\PY@tc##1{\textcolor[rgb]{0.73,0.13,0.13}{##1}}}
\def\csname PY@tok@si\endcsname{\let\PY@bf=\textbf\def\PY@tc##1{\textcolor[rgb]{0.73,0.40,0.53}{##1}}}
\def\csname PY@tok@se\endcsname{\let\PY@bf=\textbf\def\PY@tc##1{\textcolor[rgb]{0.73,0.40,0.13}{##1}}}
\def\csname PY@tok@sr\endcsname{\def\PY@tc##1{\textcolor[rgb]{0.73,0.40,0.53}{##1}}}
\def\csname PY@tok@ss\endcsname{\def\PY@tc##1{\textcolor[rgb]{0.10,0.09,0.49}{##1}}}
\def\csname PY@tok@sx\endcsname{\def\PY@tc##1{\textcolor[rgb]{0.00,0.50,0.00}{##1}}}
\def\csname PY@tok@m\endcsname{\def\PY@tc##1{\textcolor[rgb]{0.40,0.40,0.40}{##1}}}
\def\csname PY@tok@gh\endcsname{\let\PY@bf=\textbf\def\PY@tc##1{\textcolor[rgb]{0.00,0.00,0.50}{##1}}}
\def\csname PY@tok@gu\endcsname{\let\PY@bf=\textbf\def\PY@tc##1{\textcolor[rgb]{0.50,0.00,0.50}{##1}}}
\def\csname PY@tok@gd\endcsname{\def\PY@tc##1{\textcolor[rgb]{0.63,0.00,0.00}{##1}}}
\def\csname PY@tok@gi\endcsname{\def\PY@tc##1{\textcolor[rgb]{0.00,0.63,0.00}{##1}}}
\def\csname PY@tok@gr\endcsname{\def\PY@tc##1{\textcolor[rgb]{1.00,0.00,0.00}{##1}}}
\def\csname PY@tok@ge\endcsname{\let\PY@it=\textit}
\def\csname PY@tok@gs\endcsname{\let\PY@bf=\textbf}
\def\csname PY@tok@gp\endcsname{\let\PY@bf=\textbf\def\PY@tc##1{\textcolor[rgb]{0.00,0.00,0.50}{##1}}}
\def\csname PY@tok@go\endcsname{\def\PY@tc##1{\textcolor[rgb]{0.53,0.53,0.53}{##1}}}
\def\csname PY@tok@gt\endcsname{\def\PY@tc##1{\textcolor[rgb]{0.00,0.27,0.87}{##1}}}
\def\csname PY@tok@err\endcsname{\def\PY@bc##1{\setlength{\fboxsep}{0pt}\fcolorbox[rgb]{1.00,0.00,0.00}{1,1,1}{\strut ##1}}}
\def\csname PY@tok@kc\endcsname{\let\PY@bf=\textbf\def\PY@tc##1{\textcolor[rgb]{0.00,0.50,0.00}{##1}}}
\def\csname PY@tok@kd\endcsname{\let\PY@bf=\textbf\def\PY@tc##1{\textcolor[rgb]{0.00,0.50,0.00}{##1}}}
\def\csname PY@tok@kn\endcsname{\let\PY@bf=\textbf\def\PY@tc##1{\textcolor[rgb]{0.00,0.50,0.00}{##1}}}
\def\csname PY@tok@kr\endcsname{\let\PY@bf=\textbf\def\PY@tc##1{\textcolor[rgb]{0.00,0.50,0.00}{##1}}}
\def\csname PY@tok@bp\endcsname{\def\PY@tc##1{\textcolor[rgb]{0.00,0.50,0.00}{##1}}}
\def\csname PY@tok@fm\endcsname{\def\PY@tc##1{\textcolor[rgb]{0.00,0.00,1.00}{##1}}}
\def\csname PY@tok@vc\endcsname{\def\PY@tc##1{\textcolor[rgb]{0.10,0.09,0.49}{##1}}}
\def\csname PY@tok@vg\endcsname{\def\PY@tc##1{\textcolor[rgb]{0.10,0.09,0.49}{##1}}}
\def\csname PY@tok@vi\endcsname{\def\PY@tc##1{\textcolor[rgb]{0.10,0.09,0.49}{##1}}}
\def\csname PY@tok@vm\endcsname{\def\PY@tc##1{\textcolor[rgb]{0.10,0.09,0.49}{##1}}}
\def\csname PY@tok@sa\endcsname{\def\PY@tc##1{\textcolor[rgb]{0.73,0.13,0.13}{##1}}}
\def\csname PY@tok@sb\endcsname{\def\PY@tc##1{\textcolor[rgb]{0.73,0.13,0.13}{##1}}}
\def\csname PY@tok@sc\endcsname{\def\PY@tc##1{\textcolor[rgb]{0.73,0.13,0.13}{##1}}}
\def\csname PY@tok@dl\endcsname{\def\PY@tc##1{\textcolor[rgb]{0.73,0.13,0.13}{##1}}}
\def\csname PY@tok@s2\endcsname{\def\PY@tc##1{\textcolor[rgb]{0.73,0.13,0.13}{##1}}}
\def\csname PY@tok@sh\endcsname{\def\PY@tc##1{\textcolor[rgb]{0.73,0.13,0.13}{##1}}}
\def\csname PY@tok@s1\endcsname{\def\PY@tc##1{\textcolor[rgb]{0.73,0.13,0.13}{##1}}}
\def\csname PY@tok@mb\endcsname{\def\PY@tc##1{\textcolor[rgb]{0.40,0.40,0.40}{##1}}}
\def\csname PY@tok@mf\endcsname{\def\PY@tc##1{\textcolor[rgb]{0.40,0.40,0.40}{##1}}}
\def\csname PY@tok@mh\endcsname{\def\PY@tc##1{\textcolor[rgb]{0.40,0.40,0.40}{##1}}}
\def\csname PY@tok@mi\endcsname{\def\PY@tc##1{\textcolor[rgb]{0.40,0.40,0.40}{##1}}}
\def\csname PY@tok@il\endcsname{\def\PY@tc##1{\textcolor[rgb]{0.40,0.40,0.40}{##1}}}
\def\csname PY@tok@mo\endcsname{\def\PY@tc##1{\textcolor[rgb]{0.40,0.40,0.40}{##1}}}
\def\csname PY@tok@ch\endcsname{\let\PY@it=\textit\def\PY@tc##1{\textcolor[rgb]{0.25,0.50,0.50}{##1}}}
\def\csname PY@tok@cm\endcsname{\let\PY@it=\textit\def\PY@tc##1{\textcolor[rgb]{0.25,0.50,0.50}{##1}}}
\def\csname PY@tok@cpf\endcsname{\let\PY@it=\textit\def\PY@tc##1{\textcolor[rgb]{0.25,0.50,0.50}{##1}}}
\def\csname PY@tok@c1\endcsname{\let\PY@it=\textit\def\PY@tc##1{\textcolor[rgb]{0.25,0.50,0.50}{##1}}}
\def\csname PY@tok@cs\endcsname{\let\PY@it=\textit\def\PY@tc##1{\textcolor[rgb]{0.25,0.50,0.50}{##1}}}
        \newbox\Wrappedcontinuationbox 
        \newbox\Wrappedvisiblespacebox 
        \newcommand*\Wrappedvisiblespace {\textcolor{red}{\textvisiblespace}} 
        \newcommand*\Wrappedcontinuationsymbol {\textcolor{red}{\llap{\tiny$\m@th\hookrightarrow$}}} 
        \newcommand*\Wrappedcontinuationindent {3ex } 
        \newcommand*\Wrappedafterbreak {\kern\Wrappedcontinuationindent\copy\Wrappedcontinuationbox} 
        \newcommand*\Wrappedbreaksatspecials {%
            \def\PYGZus{\discretionary{\char`\_}{\Wrappedafterbreak}{\char`\_}}%
            \def\PYGZob{\discretionary{}{\Wrappedafterbreak\char`\{}{\char`\{}}%
            \def\PYGZcb{\discretionary{\char`\}}{\Wrappedafterbreak}{\char`\}}}%
            \def\PYGZca{\discretionary{\char`\^}{\Wrappedafterbreak}{\char`\^}}%
            \def\PYGZam{\discretionary{\char`\&}{\Wrappedafterbreak}{\char`\&}}%
            \def\PYGZlt{\discretionary{}{\Wrappedafterbreak\char`\<}{\char`\<}}%
            \def\PYGZgt{\discretionary{\char`\>}{\Wrappedafterbreak}{\char`\>}}%
            \def\PYGZsh{\discretionary{}{\Wrappedafterbreak\char`\#}{\char`\#}}%
            \def\PYGZpc{\discretionary{}{\Wrappedafterbreak\char`\%}{\char`\%}}%
            \def\PYGZdl{\discretionary{}{\Wrappedafterbreak\char`\$}{\char`\$}}%
            \def\PYGZhy{\discretionary{\char`\-}{\Wrappedafterbreak}{\char`\-}}%
            \def\PYGZsq{\discretionary{}{\Wrappedafterbreak\textquotesingle}{\textquotesingle}}%
            \def\PYGZdq{\discretionary{}{\Wrappedafterbreak\char`\"}{\char`\"}}%
            \def\PYGZti{\discretionary{\char`\~}{\Wrappedafterbreak}{\char`\~}}%
        } 
        \newcommand*\Wrappedbreaksatpunct {%
            \lccode`\~`\.\lowercase{\def~}{\discretionary{\hbox{\char`\.}}{\Wrappedafterbreak}{\hbox{\char`\.}}}%
            \lccode`\~`\,\lowercase{\def~}{\discretionary{\hbox{\char`\,}}{\Wrappedafterbreak}{\hbox{\char`\,}}}%
            \lccode`\~`\;\lowercase{\def~}{\discretionary{\hbox{\char`\;}}{\Wrappedafterbreak}{\hbox{\char`\;}}}%
            \lccode`\~`\:\lowercase{\def~}{\discretionary{\hbox{\char`\:}}{\Wrappedafterbreak}{\hbox{\char`\:}}}%
            \lccode`\~`\?\lowercase{\def~}{\discretionary{\hbox{\char`\?}}{\Wrappedafterbreak}{\hbox{\char`\?}}}%
            \lccode`\~`\!\lowercase{\def~}{\discretionary{\hbox{\char`\!}}{\Wrappedafterbreak}{\hbox{\char`\!}}}%
            \lccode`\~`\/\lowercase{\def~}{\discretionary{\hbox{\char`\/}}{\Wrappedafterbreak}{\hbox{\char`\/}}}%
            \catcode`\.\active
            \catcode`\,\active 
            \catcode`\;\active
            \catcode`\:\active
            \catcode`\?\active
            \catcode`\!\active
            \catcode`\/\active 
            \lccode`\~`\~ 	
        }
    \let\OriginalVerbatim=\Verbatim
    \renewcommand{\Verbatim}[1][1]{%
        \sbox\Wrappedcontinuationbox {\Wrappedcontinuationsymbol}%
        \sbox\Wrappedvisiblespacebox {\FV@SetupFont\Wrappedvisiblespace}%
        \def\FancyVerbFormatLine ##1{\hsize\linewidth
            \vtop{\raggedright\hyphenpenalty\z@\exhyphenpenalty\z@
                \doublehyphendemerits\z@\finalhyphendemerits\z@
                \strut ##1\strut}%
        }%
        \def\FV@Space {%
            \nobreak\hskip\z@ plus\fontdimen3\font minus\fontdimen4\font
            \discretionary{\copy\Wrappedvisiblespacebox}{\Wrappedafterbreak}
            {\kern\fontdimen2\font}%
        }%
        
        \Wrappedbreaksatspecials
        \OriginalVerbatim[#1,codes*=\Wrappedbreaksatpunct]%
    }
    \definecolor{incolor}{HTML}{303F9F}
    \definecolor{outcolor}{HTML}{D84315}
    \definecolor{cellborder}{HTML}{CFCFCF}
    \definecolor{cellbackground}{HTML}{F7F7F7}
	\title{\textbf{Numerical approximation of port-Hamiltonian systems for hyperbolic or parabolic PDEs with boundary control}\thanks{%
This work is supported by the project ANR-16-CE92-0028,
	entitled {\em Interconnected Infinite-Dimensional systems for Heterogeneous Media}, INFIDHEM, funded by the French National
	Research Agency (ANR) and the Deutsche Forschungsgemeinschaft (DFG). Further information is available at {\url{https://websites.isae-supaero.fr/infidhem/the-project.}}}}
	\author{\textsc{Brugnoli}, Andrea\footnote{Andrea.Brugnoli@isae-supaero.fr} \and \textsc{Haine}, Ghislain\footnote{Ghislain.Haine@isae-supaero.fr} \and \textsc{Serhani}, Anass\footnote{Anass.Serhani@isae-supaero.fr} \and \textsc{Vasseur}, Xavier\footnote{Xavier.Vasseur@isae-supaero.fr}}
	\date{ISAE-SUPAERO, Universit\'e de Toulouse, France.\\
	\vspace{2mm} {10 Avenue Edouard Belin, BP-54032, 31055 Toulouse Cedex 4.}}
\begin{document}

\maketitle

\begin{abstract}
We consider the design of structure-preserving discretization methods for the solution of systems of boundary controlled Partial Differential Equations (PDEs) thanks to the port-Hamiltonian formalism. We first provide a novel general structure of infinite-dimensional port-Hamiltonian systems (pHs) for which the Partitioned Finite Element Method (PFEM) straightforwardly applies. The proposed strategy is applied to abstract multidimensional linear hyperbolic and parabolic systems of PDEs. Then we show that instructional model problems based on the wave equation, Mindlin equation and heat equation fit within this unified framework. Secondly we introduce the ongoing project {\sc{SCRIMP}} (Simulation and ContRol of Interactions in Multi-Physics) developed for the numerical simulation of infinite-dimensional pHs. {\sc{SCRIMP}} notably relies on the {\sc{FEniCS}} open-source computing platform for the finite element spatial discretization. Finally, we illustrate how to solve the considered model problems within this framework by carefully explaining the methodology. As additional support, companion interactive {\sc{Jupyter}} notebooks are available.

{\it Keywords}: port-Hamiltonian systems; Partial differential equations; Boundary control;  Structure-preserving discretization; Finite Element Method

{\it Mathematical classification (2010)}: 65M60; 35L90; 35K90
\end{abstract}
	


\section{Introduction}

The efficient numerical simulation of complex multiphysics systems is ubiquitous in Computational Science and Engineering. Although a wide range of methods exists to tackle specific problems, they often lack of versatility and adaptability, especially when the modelling is of increasing complexity as in real-world applications.

Infinite-dimensional port-Hamiltonian systems (pHs) have been first introduced in \cite{vanderschaft2002} using the language of differential geometry. They provide a powerful tool to model complex multiphysics open systems (whether or not being linear) for control purpose. A wide range of physical systems has been written within this formalism, see {\it e.g.} \cite{LeGorrec2013a,VanderSchaft2018a,Vu2016}. This twenty-year-old framework \cite{pHsReview} enjoys nice properties, such as the relevant physical meaning of the variables, a useful underlying linear structure (namely Stokes-Dirac structure) which encodes the power balance satisfied by the Hamiltonian (often chosen as an energy), and last but not least: the interconnection of multiple pHs remains a pHs. This allows a ``modular'' modelling of complex multiphysics systems.

Since then, many researchers have developed numerical methods to discretize these systems in a structure-preserving manner, hence keeping the advantages of the infinite-dimensional pHs. Such methods aim at constructing an approximate finite-dimensional pHs at the discrete level. Our aim is to show the versatility of PFEM thanks to a new unified framework, and to introduce the ongoing project SCRIMP together with companion {\sc{Jupyter}} notebooks ~\cite{bhsv:20z}. Each particular example discussed here has been treated  previously \cite{brugnoli2019mindlin,cardoso2018pfem,SerMatHai19b,SerMatHai19c}. Here, the existence of an underlying common structure for many pHs is highlighted. Obtaining such a general scheme for infinite-dimensional pHs is of major importance for control purposes \cite{toledo2020automatica}, and for coupling atomic elements into a more complex system with the guarantee of well-preserved energy exchanges between subsystems \cite{Krug2020}. 

The first proposed structure-preserving scheme for pHs dates back to \cite{golo2004}, where the authors proposed a mixed finite element spatial discretization for hyperbolic systems of conservation laws. Pseudo-spectral methods relying on higher-order global polynomial approximations were studied in \cite{moulla2012}. Unfortunately this method seems to be limited to the one-dimensional case. A finite difference method with staggered grids was developed in \cite{trenchant2018} for two-dimensional domains, but complex geometries are then difficult to tackle. Weak formulations leading to Galerkin numerical approximations began to be explored in the past few years. In \cite{kotyczka2018} the prototypical example of hyperbolic systems of two conservation laws has been discretized by a weak formulation. However the construction of the necessary power-preserving mappings is not straightforward on arbitrary meshes. All these methods require {\it ad hoc} implementations, and are usually restricted to particular cases of pHs. Furthermore, since they do not rely on well-established and versatile numerical libraries, using such techniques remains confined within a small community of experts. We refer the reader for a more complete overview of structure-preserving discretization for pHs to \cite{pHsReview,KotHDR} and the references therein.

Thanks to \cite{cardoso2018pfem} it has become clear that there exists a deep relation between structure-preserving discretization of pHs and Mixed Finite Element Method (MFEM). Indeed velocity-stress formulations for the wave dynamics \cite{kirby2015} and elastodynamics problems are of Hamiltonian type and their mixed discretization preserves this structure for \emph{closed} systems. This leads to the intuition that a MFEM may be used to discretize the underlying geometric structure of pHs in a unified way, even for \emph{open} systems, translating the infinite dimensional Stokes-Dirac structure into a finite Dirac structure. The discretization strategy relies on the partitioned structure of the problem and for this reason goes under the name of Partitioned Finite Element method (PFEM). This method proves nice convergence properties, see {\it e.g.} \cite{HaiMatSer20} for a recent proof on the wave equation, that does not require the fulfilment of the usual \emph{inf--sup} condition for MFEM, generalizing the results cited in \cite[Remark~6]{Jol03}.

It has to be pointed out that the core idea of PFEM, i.e. performing an integration by parts on a \emph{partition} of the weak formulation of the system of equations, has already been proposed for \emph{closed} hyperbolic systems in \cite{Jol03}. Therein, the formulations are called either  \emph{primal--dual} or \emph{dual--primal}, depending on the chosen partition of the system.

The major difference between MFEM and PFEM relies on the choice of the test functions in the weak formulation, hence on the finite element form functions. Indeed, in PFEM, they never carry homogeneous conditions. In \textit{e.g.} \cite[Section~7.1]{BofBreFor13}, it is shown that for a Dirichlet control, test functions are taken in the kernel of the Dirichlet trace. As already mentioned, in~\cite{Jol03}, the proposed \emph{primal--dual} and \emph{dual--primal} discretizations are then suitable for the structure-preserving discretization of \emph{closed} systems. Nevertheless, by keeping these homogeneous conditions in the test functions, not only the application of Dirichlet control is difficult, but the definition of the Neumann observation, necessary for the discrete power balance, would be more complex. PFEM aims at easing the mimicking of the continuous power balance at the discrete level, by relaxing the test functions used in~\cite{Jol03}. To the best of our knowledge, this relaxation has not been investigated yet in the case of boundary controlled wave-like systems, and this probably comes from the fact that MFEM have been first developed for elliptic systems. Indeed, in this case, such a boundary condition is mandatory for well-posedness (especially to obtain the \emph{ellipticity in the kernel} condition~\cite[Eq.~(5.1.7)]{BofBreFor13}), while PFEM is made for evolution systems, and more especially for pHs. Furthermore, we are not aware of applications of the studied scheme, being called MFEM or PFEM, to the boundary controlled heat equation, beside the first attempt presented in~\cite{SerMatHai19b,SerMatHai19c}. %

In our opinion, the driving forces of PFEM are threefold: first, PFEM takes collocated boundary controls and observations into account in a simple manner; secondly, PFEM is structure-preserving, meaning in particular that the discrete power balance perfectly mimics the continuous one; thirdly, the implementation of PFEM only relies on existing finite element libraries, such as \textsc{FEniCS} \cite{AlnaesBlechta2015a}, selected in the ongoing project SCRIMP for its robustness and efficiency. Last but not least, the pHs point of view allows us to separate axioms of physics (such as conservation laws) from constitutive laws and equations of state (such as Hooke's law and ideal gas law). PFEM is based on this separation, providing the possibility to tackle parabolic or nonlinear systems, at the price of solving a finite-dimensional \emph{port-Hamiltonian Differential Algebraic Equation (pHDAE)}. In the particular case of linear hyperbolic PDE, as shown in Section~\ref{sec:generalFramework}, the constitutive laws can be easily ({\it i.e.} without matrix inversions) taken into account in order to recover an \emph{Ordinary Differential Equation (ODE)}.

PFEM could also be named \textit{e.g.} \emph{extended MFEM} or \emph{relaxed MFEM}. Since only evolution systems are considered (not necessarily of hyperbolic type, see \textit{e.g.} Section~\ref{sec:heat}), relaxed conditions for the selection of the test functions hold, hence for the finite elements as well. We choose to follow the terminology introduced in~\cite{cardoso2018pfem} and widely used since then. Furthermore, it emphasizes the pH formulation of the initial system to discretize. 

\paragraph*{Main contributions}

We first aim at presenting the strategy of the structure-preserving discretization PFEM, in a new unified abstract framework, allowing for an easy application to a wide class of boundary controlled partial differential equations. Then, in order to show the versatility of our approach, we successively apply PFEM to the boundary controlled wave equation, the boundary controlled Mindlin plate model, and the boundary controlled heat equation with a thermodynamically well-founded Hamiltonian (namely the \emph{internal energy}, instead of the quadratic functional commonly used). Taking advantage of the strong underlying structure, we finally describe a unified object-oriented implementation of these models {\it via} PFEM. Companion interactive {\sc{Jupyter}} notebooks \cite{bhsv:20z} are discussed to illustrate our methodology.

\paragraph*{Structure of the manuscript}

The manuscript is organized as follows. In Section~\ref{sec:generalFramework}  the abstract pHs framework is introduced, with a particular focus on both hyperbolic and parabolic linear systems of partial differential equations. In Section~\ref{sec:pfem} the general  structure-preserving discretization is presented, and then specialized on the two cases previously mentioned. In Section~\ref{sec:scrimp} the ongoing  environment SCRIMP is described in detail. In Section~\ref{sec:simus} the three companion interactive {\sc{Jupyter}} notebooks \cite{bhsv:20z} are thoroughly explained. Conclusions and perspectives are finally drawn in Section~\ref{sec:conclusions}.

\section{Definition of the general framework}
\label{sec:generalFramework}
{In this section, we introduce an abstract class of pHs and their underlying geometric structure: the Dirac structure for the finite dimensional case and the Stokes-Dirac structure for the infinite dimensional case. For the infinite-dimensional case it is shown how hyperbolic and parabolic systems easily fit into this framework.}

\subsection{Finite dimensional port-Hamiltonian systems}

\paragraph*{State representation}

Let us begin with a classical definition of a pHs in finite dimension. Consider the time-invariant dynamical system \cite{SchJel14}:
\begin{equation}
\label{eq:finitePH}
\begin{cases}
\displaystyle \diff{ \mathbf{x} }{t} &= \left( \mathbf{J}(\mathbf{x}) - \mathbf{R}(\mathbf{x}) \right) \nabla H(\mathbf{x}) + \mathbf{B}(\mathbf{x})\mathbf{u}, \\
\mathbf{y} &= \mathbf{B}(\mathbf{x})^\top ~ \nabla H(\mathbf{x}),
\end{cases}
\end{equation}
where $H(\mathbf{x}) : X \simeq \mathbb{R}^n \rightarrow \mathbb{R}$, the Hamiltonian, is a real-valued function of the vector of \emph{energy variables} $\mathbf{x}$, bounded from below. Matrix-valued functions $\mathbf{J}(\mathbf{x})$ (the \emph{structure operator}) and $\mathbf{R}(\mathbf{x})$ (the \emph{dissipative} or \emph{resistive operator}) are skew-symmetric and symmetric positive semi-definite respectively. The control $\mathbf{u} \in \mathbb{R}^m$ is applied thanks to the matrix-valued control function $\mathbf{B}(\mathbf{x})$ of size $m \times n$. Variable $\mathbf{y} \in \bbR^m$ is the power conjugated output to the input.

Such a system is called a \emph{port-Hamiltonian system}, as it arises from the Hamiltonian modelling of a physical system and it interacts with the environment {\it via} the input $\mathbf{u}$ and the output $\mathbf{y}$, included in the formulation. The vector $\nabla H(\mathbf{x})$ is made of the \emph{co-energy variables}.

Due to the structural properties of $\mathbf{J}(\mathbf{x})$ and $\mathbf{R}(\mathbf{x})$, the port-Hamiltonian system enjoys the nice following \emph{power balance}:
\begin{equation}
\label{eq:finitePH-PowerBalance}
\diff{H}{t} = - \left( \nabla H(\mathbf{x}) \right)^\top ~ \mathbf{R}(\mathbf{x}) \nabla H(\mathbf{x}) + \mathbf{u}^\top ~ \mathbf{y} ~ \le ~ \mathbf{u}^\top ~ \mathbf{y},
\end{equation}
meaning that $\mathbf{R}(\mathbf{x})$ accounts for dissipation, and that the input--output product corresponds to the power supplied to (or took from) the system, through the control $\mathbf{u}$.

\paragraph*{Flow--effort representation}
Consider two finite dimensional vector spaces ${E}={F}\simeq\mathbb{R}^n$. The elements of ${F}$ are called \emph{flows}, while the elements of ${E}$ are called \emph{efforts}. Those are port variables and their combination gives the power flowing inside the system. The space ${B} = {F} \times {E}$ is called the bond space of power variables. Therefore, identifying ${E}$ as the dual of ${F}$, the power is defined as $\left\langle \mathbf{e}, \mathbf{f} \right\rangle := \mathbf{e}(\mathbf{f})$.
\begin{definition}[\cite{courant1990}, Def. 1.1.1]\label{def:dirstr}
Given the finite-dimensional space ${F}$ and its dual ${E}$ with respect to the inner product $\left\langle \cdot , \cdot \right\rangle_{{F}} : {F} \times {F} \rightarrow \mathbb{R}$, consider the symmetric bilinear form:
$$
\bilprod{(\mathbf{f}_1, \mathbf{e}_1)}{(\mathbf{f}_2, \mathbf{e}_2)} := {\inner{\mathbf{e}_1}{\mathbf{f}_2}} + {\inner{\mathbf{e}_2}{\mathbf{f}_1}}, \where (\mathbf{f}_i, \mathbf{e}_i) \in {B}, \; i = 1, 2.
$$
A Dirac structure on ${B} := {F} \times {E}$ is a subspace ${D} \subset {B}$, which is maximally isotropic under $\left\langle \left\langle \cdot, \cdot \right\rangle \right\rangle$.	Equivalently, a Dirac structure on ${B} := {F} \times {E}$ is a subspace ${D} \subset {B}$ which equals its orthogonal companion with respect to $\left\langle \left\langle \cdot, \cdot \right\rangle \right\rangle$, {\it i.e.} ${D} ={D}^{[\perp]}$, where:
$$
{D}^{[\perp]} := \left\{ (\mathbf{f}, \mathbf{e}) \in {B} ~ \mid ~ \bilprod{(\mathbf{f}, \mathbf{e})}{(\widetilde{\mathbf{f}}, \widetilde{\mathbf{e}})} = 0, ~ \forall ~ (\widetilde{\mathbf{f}}, \widetilde{\mathbf{e}}) \in {D} \right\}.
$$
\end{definition}

The connection between the concept of Dirac structure and pHs in its canonical form~\eqref{eq:finitePH} is achieved by considering the following \emph{ports}:
\begin{itemize}
\item
the \emph{storage ports} $(\mathbf{f}_{\mathbf{x}}, \mathbf{e}_{\mathbf{x}}) := \left( \diff{\mathbf{x}}{t}, \nabla H(\mathbf{x}) \right) \in \mathbb{R}^n \times \mathbb{R}^n$, made of the \emph{storage  flow} $\mathbf{f}_{\mathbf{x}}$ (time-derivative of the energy variables) and \emph{storage  effort} $\mathbf{e}_{\mathbf{x}}$ (the co-energy variables);
\item
the \emph{resistive (or dissipative) ports} $(\mathbf{f}_{\mathbf{R}}, \mathbf{e}_{\mathbf{R}}) \in \mathbb{R}^k \times \mathbb{R}^k$, made of the \emph{resistive (or dissipative) flow} $\mathbf{f}_{\mathbf{R}}$ and \emph{resisitive (or dissipative) effort} $\mathbf{e}_{\mathbf{R}}$;
\item
the \emph{interconnection ports} $(\mathbf{f}_{\mathbf{u}}, \mathbf{e}_{\mathbf{u}}) := ( -\mathbf{y}, \mathbf{u} ) \in \mathbb{R}^m \times \mathbb{R}^m$, made of the \emph{interconnection flow} $\mathbf{f}_{\mathbf{u}}$ and \emph{interconnection effort} $\mathbf{e}_{\mathbf{u}}$.
\end{itemize}

Assuming that the matrix $\mathbf{R}(\mathbf{x})$ has constant rank, from classical matrix factorizations there exist matrices $\mathbf{G}$ (not necessarily square, of size $k \times n$) and $\mathbf{K}$ symmetric positive semi-definite of size $k \times k$ such that $\mathbf{R} = \mathbf{G}^\top ~ \mathbf{K} \mathbf{G}$. These notations at hand, the  pHs~\eqref{eq:finitePH} rewrites:
\begin{equation}
\label{eq:finitePH-Dirac}
\begin{pmatrix}
\mathbf{f}_{\mathbf{x}}(\mathbf{x}) \\
\mathbf{f}_{\mathbf{R}}(\mathbf{x}) \\
\mathbf{f}_{\mathbf{u}}(\mathbf{x})
\end{pmatrix}
=
\begin{bmatrix}
\mathbf{J}(\mathbf{x}) 			& -\mathbf{G}(\mathbf{x})^\top	& \mathbf{B}(\mathbf{x}) \\
\mathbf{G}(\mathbf{x})			& \mathbf{0}				& \mathbf{0} \\
- \mathbf{B}(\mathbf{x})^\top	& \mathbf{0}				& \mathbf{0} \\
\end{bmatrix}
\begin{pmatrix}
\mathbf{e}_{\mathbf{x}}(\mathbf{x}) \\
\mathbf{e}_{\mathbf{R}}(\mathbf{x}) \\
\mathbf{e}_{\mathbf{u}}(\mathbf{x})
\end{pmatrix},
\end{equation}
together with the \emph{(dissipative or resistive) constitutive relation}:
\begin{equation}
\label{eq:finitePH-Constitutive}
\mathbf{e}_{\mathbf{R}}(\mathbf{x}) = \mathbf{K}(\mathbf{x}) \mathbf{f}_{\mathbf{R}}(\mathbf{x}).
\end{equation}

It is clear that the \emph{extended structure operator} $\mathbf{J}_e$ appearing in~\eqref{eq:finitePH-Dirac} is skew-symmetric of size $(n+k+m) \times (n+k+m)$. Its graph is a Dirac structure with respect to the Euclidean inner product, as a kernel representation, see~\cite{SchJel14}. Hence, it comes:
$$
\inner[\mathbb{R}^n]{\mathbf{e}_{\mathbf{x}}(\mathbf{x})}{\mathbf{f}_{\mathbf{x}}(\mathbf{x})} 
+ \inner[\mathbb{R}^k]{\mathbf{e}_{\mathbf{R}}(\mathbf{x})}{\mathbf{f}_{\mathbf{R}}(\mathbf{x})} 
+ \inner[\mathbb{R}^m]{\mathbf{e}_{\mathbf{u}}(\mathbf{x})}{\mathbf{f}_{\mathbf{u}}(\mathbf{x})}
= 0.
$$
Noting that $\diff{H}{t} = \inner[\mathbb{R}^n]{\mathbf{e}_{\mathbf{x}}(\mathbf{x})}{\mathbf{f}_{\mathbf{x}}(\mathbf{x})} $ (by definition of the storage port) leads to:
$$
\diff{H}{t} = - \inner[\mathbb{R}^k]{\mathbf{e}_{\mathbf{R}}(\mathbf{x})}{\mathbf{f}_{\mathbf{R}}(\mathbf{x})} 
- \inner[\mathbb{R}^m]{\mathbf{e}_{\mathbf{u}}(\mathbf{x})}{\mathbf{f}_{\mathbf{u}}(\mathbf{x})},
$$
and thanks to the symmetry of $\mathbf{R}$,~\eqref{eq:finitePH-Constitutive} gives:
$$
\diff{H}{t} = - \inner[\mathbb{R}^m]{\mathbf{f}_{\mathbf{R}}(\mathbf{x})}{\mathbf{K}( \mathbf{x}) \mathbf{f}_{\mathbf{R}}(\mathbf{x})} 
- \inner[\mathbb{R}^m]{\mathbf{e}_{\mathbf{u}}(\mathbf{x})}{\mathbf{f}_{\mathbf{u}}(\mathbf{x})}.
$$
Finally, from~\eqref{eq:finitePH-Dirac} and the definition of the storage and interconnection ports, the power balance~\eqref{eq:finitePH-PowerBalance} is recovered.

The relation between~\eqref{eq:finitePH} and~\eqref{eq:finitePH-Dirac}--\eqref{eq:finitePH-Constitutive} can be understood as follows: the power balance~\eqref{eq:finitePH-PowerBalance} is \emph{encoded} in the Dirac structure (obtained from the extended structure operator $\mathbf{J}_e$) together with the resistive constitutive relation.

\begin{remark}\label{rem:MassMatrices}
The canonical Euclidean inner product has been used here, but other inner products are allowed to take into account \emph{mass matrices} (symmetric positive definite) on the left-hand side of~\eqref{eq:finitePH},~\eqref{eq:finitePH-Dirac}, and~\eqref{eq:finitePH-Constitutive}. This is crucial after the spatial discretization procedure. This corresponds to a \emph{kernel representation} of Dirac structure \cite{SchJel14}.
\end{remark}

System \ref{eq:finitePH} is a pHs in canonical form. Recently, finite-dimensional differential algebraic port-Hamiltonian systems (pHDAE) have been introduced both for linear \cite{beattie2018linear} and nonlinear systems \cite{morandin2019}. This enriched description shares not only all the crucial features of ordinary pHs, but also easily accounts for algebraic constraints, time-dependent transformations and explicit dependence on time in the Hamiltonian. The application of the proposed discretization method naturally leads  to pHDAEs. Indeed, a constitutive relation between $\mathbf{f}_{\mathbf{x}} := \diff{\mathbf{x}}{t}$ and $\mathbf{e}_{\mathbf{x}} := \nabla H(\mathbf{x})$ is needed to be well-defined. But PFEM takes into account constitutive relations apart from~\eqref{eq:finitePH-Dirac} as constraints. However, as shown later in Sections~\ref{sec:wave} and~\ref{sec:min}, the method simplifies in the case, for instance, of linear hyperbolic systems.

\subsection{Infinite-dimensional port-Hamiltonian systems}

In this section an infinite-dimensional generalization of pHs is presented. For sake of readability,  the (Stokes-)Dirac structure is first defined, and secondly, infinite-dimensional pHs are then described in both hyperbolic and parabolic cases. A more general framework can be designed, but this goes beyond the aim of this present work.

\paragraph*{Structure operator}

As to avoid functional difficulties, the analogue of the extended structure operator will not be written as in~\eqref{eq:finitePH-Dirac}. {More precisely, the control operator will not be included in an extended structure unbounded operator, but given apart. The Stokes-Dirac will be then obtained thanks to a structure operator related to the boundary control operator through an abstract Green formula.} However, like in finite dimension, the aim is to establish a link between flow and effort variables. Most importantly, the underlying Stokes-Dirac structure must encode the power balance of the dynamical system under study.

Consider a Lipschitz domain $\Omega \subset \mathbb{R}^d, \; d \in \{1,2,3\}$, and the relation:
\begin{equation}\label{eq:stdir}
 \begin{pmatrix}
 \bm{f}_1 \\ \bm{f}_2
 \end{pmatrix} = 
 \begin{bmatrix}
 \bm{0} & -\mathcal{L}^* \\
 \mathcal{L} & \bm{0} \\
 \end{bmatrix}
 \begin{pmatrix}
 \bm{e}_1 \\ \bm{e}_2
 \end{pmatrix}, \quad
 \begin{aligned}
 \bm{e}_1 \in H^{\mathcal{L}} : \{\bm{e}_1 \in L^2(\Omega, \bbA) ~ \mid ~ \mathcal{L} \bm{e}_1 \in L^2(\Omega, \mathbb{B}) \},\\ 
 \bm{e}_2 \in H^{-\mathcal{L}^*} : \{\bm{e}_2 \in L^2(\Omega, \bbB) ~ \mid ~ -\mathcal{L}^* \bm{e}_2 \in L^2(\Omega, \mathbb{A}) \}.\\ 
 \end{aligned}{}
\end{equation}
By $L^2(\Omega, \mathbb{X})$ we denote the space of square integrable $\mathbb{X}$-valued functions.
Symbol $\mathbb{A}, \mathbb{B}$ denote either the space of scalars $\bbR$, vectors $\bbR^d$, symmetric tensors $\bbR^{d \times d}_{\text{sym}} =: \mathbb{S}$ or a Cartesian product of those, depending on the particular example. The operator $\mathcal{L}: H^{\mathcal{L}} \rightarrow L^2(\Omega, \mathbb{B})$ is a generic differential, and therefore linear but unbounded, operator. 
The notation $\mathcal{L}^*: H^{-\mathcal{L}^*} \rightarrow L^2(\Omega, \mathbb{A})$ denotes the formal adjoint of $\mathcal{L}$, defined by the relation:
\begin{equation}\label{eq:formalAdj}
\inner[L^2(\Omega, \bbB)]{\mathcal{L}\bm{e}_1}{\bm{e}_2} = \inner[L^2(\Omega, \bbA)]{\bm{e}_1}{\mathcal{L}^*\bm{e}_2}, \qquad \bm{e}_1 \in C_0^\infty(\Omega, \bbA), ~ \bm{e}_2 \in C_0^\infty(\Omega, \bbB).
\end{equation}
Of course, for~\eqref{eq:stdir} to be well-defined, constitutive relations are needed. Only physical laws will be taken into account when constructing the above relation on concrete examples. As pointed out in the introduction, PFEM aims at both preserving this relation and discretizing constitutive laws to close the system.

\begin{remark}
One can be confused by the lack of evolution in time in~\eqref{eq:stdir}. However, this emphasizes an important paradigm in the proposed point of view: this relation translates the \emph{time-independent} geometric structure of the pHs as an equation with differential operator (ill-posed on its own), while constitutive relations will bring back the time dependency of the problem. In particular, some flows must be the time derivative of the energy variables.
\end{remark} 

Throughout the paper, $\inner[X]{\cdot}{\cdot}$ denotes the inner product of the Hilbert space $X$. Definition \eqref{eq:formalAdj} is analogous to Definition 5.80 in \cite{rogers2004pde}. In Section \ref{sec:wave}, the operator $\mathcal{L}$ is the gradient, denoted by $\grad$, and its formal adjoint is the divergence, denoted by $\div$, from the so-called Green's formula (integration by parts). In Section \ref{sec:min}, the operator $\mathcal{L}$ contains both $\grad$ and $\Grad$. This latter corresponds to the symmetric part of the gradient and represents the deformation tensor in continuum mechanics:
\begin{equation*}
\Grad(\bm{e}) := \frac{1}{2} \left(\nabla \bm{e} + \nabla^\top \bm{e} \right), \qquad \bm{e} \in C^\infty(\bbR^d).
\end{equation*}
The formal adjoint of $\Grad$ is the tensor divergence $\Div$. For a tensor field $\bm{E}: \Omega \rightarrow \mathbb{M}:=\bbR^{d\times d}$, with components $e_{ij}$, the divergence is a vector, defined columnwise as:
\begin{equation*}
\Div(\bm E) := \left( \sum_{i = 1}^d \partial_{x_i} e_{ij} \right)_{j = 1, \dots, d}.
\end{equation*}
Finally, in Section~\ref{sec:heat}, $\mathcal{L}$ is made of the gradient and the identity operator.

\paragraph*{Stokes-Dirac structure}

Definition~\ref{def:dirstr} still remains valid in infinite dimension. Nevertheless, as stated above, the structure operator in~\eqref{eq:stdir} is not extended to include the control operator. Hence an additional assumption has to be made for $\begin{bmatrix} \bm{0} & -\mathcal{L}^* \\ \mathcal{L} & \bm{0} \\ \end{bmatrix}$ to define a Dirac structure in relation with a pHs coming from boundary control of partial differential equations. In other words, a Stokes-Dirac structure requires the specification of boundary variables in order to express a general power conservation property for \textit{open} physical systems. This assumption is based on the so-called Stokes' theorem (also known as the divergence theorem, Gauss's theorem or Ostrogradsky's theorem) and its corollaries, as the Green's formula.

\begin{assumption}[Abstract Green's formula]
The operator $\mathcal{L}$ is assumed to satisfy the abstract Green's formula:
\begin{equation}\label{eq:intbypar}
 \inner[L^2(\Omega, \bbB)]{\mathcal{L}\bm{e}_1}{\bm{e}_2} - \inner[L^2(\Omega, \bbA)]{\bm{e}_1}{\mathcal{L}^*\bm{e}_2} 
 = \inner[V_\partial,(V_\partial)']{\Gamma_0 \bm{e}_1 }{\Gamma_\perp \bm{e}_2 }, 
 \quad \forall ~ \bm{e}_1 \in H^{\mathcal{L}}, ~ \bm{e}_2 \in H^{-\mathcal{L}^*},
\end{equation}
where the right-hand side is the duality bracket at the boundary, on a well-suited boundary functional space $V_\partial$ for some trace operators $\Gamma_0, \; \Gamma_\perp$. From now on, this duality bracket will be denoted by $\inner[\partial\Omega]{\cdot}{\cdot}$ with a slight abuse of notation. 
\end{assumption}

\begin{remark}
This abstract formula is well-known in the boundary control systems theory, see {\it e.g.} \cite[Chapter~10]{TucWei09}.
\end{remark}

\begin{remark}
In practice, equation~\eqref{eq:intbypar} dictates the \emph{causalities}, {\it i.e.} the possible choices for the \emph{boundary control} $\bm{u}_\partial$ and the \emph{boundary observation} $\bm{y}_\partial$, {\it via} the equality $\inner[\partial\Omega]{\Gamma_0 \bm{e}_1 }{\Gamma_\perp \bm{e}_2 } = \inner[\partial\Omega]{\bm{u}_\partial }{\bm{y}_\partial }$ (with a slight abuse of notation for the right-hand side to make sense). Of course, the admissible causalities are also related to the well-posedness of the system under study, and in particular to the definitions of the boundary functional spaces.
\end{remark}

For sake of simplicity, a focus on the two following causalities will be made. Let the boundary variables associated to system \eqref{eq:stdir} be defined by:
\begin{equation}\label{eq:causalityWaveMin}
\bm{e}_\partial = \Gamma_\perp \bm{e}_2 \in (V_\partial)', \qquad \bm{f}_\partial = -\Gamma_0 \bm{e}_1 \in V_\partial,
\end{equation}
or the other way:
\begin{equation}\label{eq:causalityHeat}
\bm{e}_\partial = \Gamma_0 \bm{e}_1 \in V_\partial, \qquad \bm{f}_\partial = -\Gamma_\perp \bm{e}_2 \in (V_\partial)'.
\end{equation}
In light of~\eqref{eq:intbypar}, systems:
\begin{equation}\label{eq:stdir_bound}
 \begin{aligned}
 \begin{pmatrix}
 \bm{f}_1 \\ \bm{f}_2
 \end{pmatrix} &= 
 \begin{bmatrix}
 \bm{0} & -\mathcal{L}^* \\
 \mathcal{L} & \bm{0} \\
 \end{bmatrix}
 \begin{pmatrix}
 \bm{e}_1 \\ \bm{e}_2
 \end{pmatrix}, \qquad
 \begin{pmatrix}
 \bm{e}_\partial \\ \bm{f}_\partial
 \end{pmatrix} &= 
 \begin{bmatrix}
 \bm{0} & \Gamma_\perp \\
 -\Gamma_0 & \bm{0} \\
 \end{bmatrix}
 \begin{pmatrix}
 \bm{e}_1 \\ \bm{e}_2
 \end{pmatrix},
 \end{aligned}
\end{equation}
and:
\begin{equation}\label{eq:stdir_bound_bis}
 \begin{aligned}
 \begin{pmatrix}
 \bm{f}_1 \\ \bm{f}_2
 \end{pmatrix} &= 
 \begin{bmatrix}
 \bm{0} & -\mathcal{L}^* \\
 \mathcal{L} & \bm{0} \\
 \end{bmatrix}
 \begin{pmatrix}
 \bm{e}_1 \\ \bm{e}_2
 \end{pmatrix}, \qquad
 \begin{pmatrix}
 \bm{e}_\partial \\ \bm{f}_\partial
 \end{pmatrix} &= 
 \begin{bmatrix}
 \Gamma_0 & \bm{0} \\
 \bm{0} & -\Gamma_\perp \\
 \end{bmatrix}
 \begin{pmatrix}
 \bm{e}_1 \\ \bm{e}_2
 \end{pmatrix},
 \end{aligned}
\end{equation}
define Stokes-Dirac structures with respect to the bilinear pairing:
\begin{multline*}
 \bilprod{(\bm{f}_1^1, \bm{f}_2^1, \bm{f}_\partial^1, \bm{e}_1^1, \bm{e}_2^1, \bm{e}_\partial^1)}{(\bm{f}_1^2, \bm{f}_2^2, \bm{f}_\partial^2, \bm{e}_1^2, \bm{e}_2^2, \bm{e}_\partial^2)} \\
 = \inner[L^2(\Omega, \bbA)]{\bm{f}_1^1}{\bm{e}_1^2} + \inner[L^2(\Omega, \bbB)]{\bm{f}_2^1}{\bm{e}_2^2} 
 + \inner[L^2(\Omega, \bbA)]{\bm{f}_1^2}{\bm{e}_1^1} + \inner[L^2(\Omega, \bbB)]{\bm{f}_2^2}{\bm{e}_2^1} \\
 + \inner[\partial\Omega]{\bm{f}_\partial^1}{\bm{e}_\partial^2} + \inner[\partial\Omega]{\bm{f}_\partial^2}{\bm{e}_\partial^1}.
\end{multline*}
Obviously, for systems~\eqref{eq:stdir_bound} and~\eqref{eq:stdir_bound_bis} to be well-defined, constitutive relations are needed.

In the remaining part of this section, only~\eqref{eq:stdir_bound} will be treated in details. The other canonical causality \eqref{eq:stdir_bound_bis}, figuring in Section~\ref{sec:heat}, straightforwardly follows with the same strategy.

\paragraph{Hyperbolic systems}
In the hyperbolic case, both flows represent the dynamics of the independent energy variables $\bm{\alpha}_1, \; \bm{\alpha}_2$. The Hamiltonian is a generic functional of these variables $H = H(\bm{\alpha}_1, \bm{\alpha}_2)$. The co-energy variables are by definition the \textit{variational derivatives} (see {\it e.g.}~\cite{SchMas02}) of $H$ with respect to the energy variables:
\begin{equation}
 \bm{f}_1 = \partial_t\bm{\alpha}_1, \quad 
 \bm{e}_1 :=\delta_{\bm{\alpha}_1} H, \qquad 
 \bm{f}_2 = \partial_t\bm{\alpha}_2, \quad
 \bm{e}_2 :=\delta_{\bm{\alpha}_2} H.
\end{equation}
Then system \eqref{eq:stdir_bound} possesses the equivalent state representation:
\begin{equation}\label{eq:pHhyp}
 \begin{aligned}
 \begin{pmatrix}
 \partial_t \bm{\alpha}_1 \\ \partial_t \bm{\alpha}_2
 \end{pmatrix} &= 
 \begin{bmatrix}
 \bm{0} & -\mathcal{L}^* \\
 \mathcal{L} & \bm{0} \\
 \end{bmatrix}
 \begin{pmatrix}
 \delta_{\bm{\alpha}_1}{H} \\ \delta_{\bm{\alpha}_2}{H}
 \end{pmatrix}, &\qquad
 \begin{pmatrix}
 \bm{u}_\partial \\ \bm{y}_\partial
 \end{pmatrix} &= 
 \begin{bmatrix}
 \bm{0} & \Gamma_\perp \\
 \Gamma_0 & \bm{0} \\
 \end{bmatrix}
 \begin{pmatrix}
 \delta_{\bm{\alpha}_1}{H} \\ \delta_{\bm{\alpha}_2}{H}
 \end{pmatrix}.
 \end{aligned}
\end{equation}
It holds $\bm{u}_\partial = \bm{e}_\partial, \; \bm{y}_\partial=- \bm{f}_\partial$. The power balance is naturally embedded in the Stokes-Dirac structure defined by~\eqref{eq:stdir_bound}:
\begin{equation}\label{eq:powbal_hyp}
    \diff{}{t}H(\bm{\alpha}_1, \bm{\alpha}_2) = \inner[\partial\Omega]{\bm{y}_\partial}{\bm{u}_\partial}.
\end{equation}

\paragraph{Linear hyperbolic systems}
The system is linear when the Hamiltonian has the form:
$$
 {H}(\bm{\alpha}_1, \bm{\alpha}_2) = \frac{1}{2} \inner[L^2(\Omega, \bbA)]{\bm{\alpha}_1}{\mathcal{Q}_1 \bm{\alpha}_1} + \frac{1}{2} \inner[L^2(\Omega, \bbB)]{\bm{\alpha}_2}{\mathcal{Q}_2 \bm{\alpha}_2},
$$
where $\mathcal{Q}_1$, $\mathcal{Q}_2$ are positive symmetric operators, bounded from below and above:
$$
	m_1 \bm{I}_{\bbA} \le\mathcal{Q}_1 \le M_1 \bm{I}_{\bbA}, \qquad m_2 \bm{I}_{\bbB} \le \mathcal{Q}_2 \le M_2 \bm{I}_{\bbB}, \qquad m_1>0, ~ m_2>0, ~ M_1>0, ~ M_2>0,
$$
with $\bm{I}_\bbA$ and $\bm{I}_\bbB$ the identity operators in $L^2(\Omega,\bbA)$ and $L^2(\Omega,\bbB)$ respectively. In this case, the co-energy variables are given by:
\begin{equation}\label{eq:e_lin}
	\bm{e}_1 := \delta_{\alpha_1} H = \mathcal{Q}_1 \bm{\alpha}_1, \qquad \bm{e}_2 := \delta_{\alpha_2} H = \mathcal{Q}_2 \bm{\alpha}_2.
\end{equation}
Since $\mathcal{Q}_1, \, \mathcal{Q}_2$ are positive and bounded from below and above, it is possible to invert them to obtain:
\begin{equation}\label{eq:alpha_lin}
\bm{\alpha}_1 = \mathcal{Q}_1^{-1}\bm{e}_1 =: \mathcal{M}_1\bm{e}_1, \qquad \bm{\alpha}_2 = \mathcal{Q}_2^{-1} \bm{e}_2 =: \mathcal{M}_2 \bm{e}_2,
\end{equation}
giving rise to the \emph{co-energy formulation}. The Hamiltonian is rewritten as:
\begin{equation}\label{eq:H_coenergy}
H(\bm{e}_{1},\bm{e}_{2}) = \frac{1}{2} \inner[L^2(\Omega, \mathbb{A})]{\bm{e}_{1}}{\mathcal{M}_1\bm{e}_{1}} + \frac{1}{2} \inner[L^2(\Omega, \mathbb{B})]{\bm{e}_{2}}{\mathcal{M}_2\bm{e}_{2}},
\end{equation}
and a linear hyperbolic pHs~\eqref{eq:stdir_bound} can be expressed as:
\begin{equation}\label{eq:pHlinhyp}
 \begin{aligned}
 \begin{bmatrix}
	\mathcal{M}_1 & \bm{0} \\
	\bm{0} & \mathcal{M}_2 \\
	\end{bmatrix}
	\begin{pmatrix}
	\partial_t \bm{e}_1 \\ \partial_t \bm{e}_2
	\end{pmatrix} &= \begin{bmatrix}
	\bm{0} & - \mathcal{L}^* \\
	\mathcal{L} & \bm{0} \\
	\end{bmatrix}\begin{pmatrix}
	\bm{e}_1 \\ \bm{e}_2
	\end{pmatrix}, \qquad
 \begin{pmatrix}
 \bm{u}_\partial \\ \bm{y}_\partial
 \end{pmatrix} &= 
 \begin{bmatrix}
 \bm{0} & \Gamma_\perp \\
 \Gamma_0 & \bm{0} \\
 \end{bmatrix}
 \begin{pmatrix}
	\bm{e}_1 \\ \bm{e}_2
	\end{pmatrix}.
 \end{aligned}
\end{equation}
In this particular case, the constitutive relations needed for system~\eqref{eq:stdir_bound} to be well-defined are given by~\eqref{eq:e_lin}, and then directly included in~\eqref{eq:pHlinhyp}. In Sections~\ref{sec:wave} and~\ref{sec:min}, it will be shown that PFEM leads directly to a finite-dimensional pHs of the form~\eqref{eq:finitePH} with $\mathbf{R(\mathbf{x})} = 0$. This simplification considerably facilitates the solution in time, as~\eqref{eq:finitePH} is an Ordinary Differential Equation (ODE).

\paragraph{Parabolic systems}
In this case, the first flow $\bm{f}_1$ still represents a dynamics $\partial_t \bm{\alpha}_1$ of the energy variable $\bm{\alpha}_1$. The Hamiltonian then reads $H = H(\bm{\alpha}_1)$, and its variational derivative gives the co-energy variable $\bm{e}_1=\delta_{\bm{\alpha}_1} H$.

The second flow $\bm{f}_2$ represents an extra flow related to the effort variable $\bm{e}_2$ appearing in the dynamics of the energy variable $\bm{\alpha}_1$. The relation is given implicitly by a mapping $\mathcal{G}$ as $\mathcal{G}(\bm{f}_2,\bm{e}_2) = 0$. Then, pHs~\eqref{eq:stdir_bound} of parabolic type is expressed as:
\begin{equation}\label{eq:pHpar}
 \begin{aligned}
 \begin{pmatrix}
 \partial_t \bm{\alpha}_1 \\ \bm{f}_2
 \end{pmatrix} &= 
 \begin{bmatrix}
 \bm{0} & -\mathcal{L}^* \\
 \mathcal{L} & \bm{0} \\
 \end{bmatrix}
 \begin{pmatrix}
 \delta_{\bm{\alpha}_1}{H} \\ \bm{e}_2 
 \end{pmatrix}, \qquad
 \begin{pmatrix}
 \bm{u}_\partial \\ \bm{y}_\partial
 \end{pmatrix} &= 
 \begin{bmatrix}
 \bm{0} & \Gamma_\perp \\
 \Gamma_0 & \bm{0} \\
 \end{bmatrix}
 \begin{pmatrix}
 \delta_{\bm{\alpha}_1}{H} \\ \bm{e}_2
 \end{pmatrix}, \qquad
 \mathcal{G}(\bm{f}_2,\bm{e}_2) = 0.
 \end{aligned}
\end{equation}
In Section~\ref{sec:heat}, an example of a parabolic-type pHs~\eqref{eq:stdir_bound_bis} is studied. It will be shown that the PFEM structure-preserving discretization of such a system naturally leads to a finite-dimensional pHDAE. Again, the power balance is naturally embedded in the Stokes-Dirac structure defined by~\eqref{eq:stdir_bound}:
\begin{equation}\label{eq:powbal_par}
    \diff{}{t}H(\bm{\alpha}_1) = -\inner[L^2(\Omega)]{\bm{f}_2}{\bm{e}_2} + \inner[\partial\Omega]{\bm{y}_\partial}{\bm{u}_\partial}.
\end{equation}
In practice, this becomes explicit with the constitutive relation $\mathcal{G}(\bm{f}_2,\bm{e}_2) = 0$ as it will be seen in Section~\ref{sec:heat} (and more generally in~\cite{SerMatHai19b,SerMatHai19c}). Note that this latter relation has to be accurately discretized to ensure that the discretized power balance mimics the continuous one.

\begin{remark}
By adding resistive port(s), dissipation(s) can easily be taken into account (both internal or at the boundary), as done in the finite-dimensional case {\it via} $\mathbf{R}(\mathbf{x})$ playing the role of a output feedback gain matrix. In this case, the system becomes a \emph{parabolic system}, the \emph{dissipative constitutive relation} being represented by $\mathcal{G}$. See~\cite{SerMatHai19d,SerMatHai19a} for a detailed discussion about structure-preserving discretization of dissipative systems.
\end{remark}

\section{The Partitioned Finite Element Method (PFEM)}\label{sec:pfem}

We are now in a position to introduce a general methodology to discretize infinite-dimensional pHs in a structure-preserving manner. The main contribution in this section is the application of PFEM to a general abstract class of pHs, unifying the previously published results. This generality is notably of particular interest for the development of a well-structured software for the numerical simulations of physics-based models. The power balances~\eqref{eq:powbal_hyp}, \eqref{eq:powbal_par} are deeply linked to a linear underlying Stokes-Dirac. The main idea of PFEM is to mimic this structure, in order to obtain a discretized copy of these power balances as~\eqref{eq:finitePH-PowerBalance}.
This systematically translates the Stokes-Dirac structure into a finite-dimensional Dirac structure.   The compatible discretization, with respect to this Dirac structure, of the constitutive relations allows to mimic the continuous power-balance. This method goes under the name Partitioned Finite Element Method (PFEM), and was originally presented in \cite{cardoso2018pfem}. The procedure is a natural extension of {MFEM} to pHs and boils down to these three simple steps:
\begin{enumerate}
	\item System \eqref{eq:stdir} is written in weak form; 
	\item The integration by parts~\eqref{eq:intbypar} is carried out on a partition of the system \eqref{eq:stdir} to make the appropriate boundary control appear;
	\item A Galerkin method is employed to obtain a finite-dimensional system. For the approximation basis, the finite element method is used here but spectral methods can be chosen as well.
\end{enumerate}

This strategy of structured discretization in order to mimic the continuous power balance at the discrete level has been addressed for closed abstract linear hyperbolic systems in \cite{Jol03}. This pioneering work already proposed the key point of PFEM: the integration by parts on a partition of the weak formulation of the system. The author called the obtained systems \emph{primal--dual} or \emph{dual--primal} formulation, depending on which line is integrated by parts. In the port-Hamiltonian formalism, systems are opened with control and observation. It appears that \cite{Jol03} admits PFEM as a generalization for structure-preserving space discretization. The choice of a control in the pHs community is called a \emph{causality}, and \emph{primal--dual} or \emph{dual--primal} correspond in this work to the \emph{canonical} causalities~\eqref{eq:stdir_bound} and~\eqref{eq:stdir_bound_bis} respectively.

\subsection{General strategy}

Consider smooth test functions $\bm{v}_1$ and $\bm{v}_2$ and the weak form of \eqref{eq:stdir}:
\begin{equation}\label{eq:weak_dyn}
\begin{aligned}
\inner[L^2(\Omega, \bbA)]{\bm{v}_1}{\bm{f}_1} &= \inner[L^2(\Omega, \bbA)]{\bm{v}_1}{-\mathcal{L}^*\bm{e}_2}, \\
\inner[L^2(\Omega, \bbB)]{\bm{v}_2}{\bm{f}_2} &= \inner[L^2(\Omega, \bbB)]{\bm{v}_2}{\mathcal{L}\bm{e}_1}.
\end{aligned}
\end{equation}
Next the integration by parts is performed either on the first or on the second line (the system is \emph{partitioned}), depending on the causality.

\paragraph{Integration by parts of the term \texorpdfstring{$\inner[L^2(\Omega, \bbA)]{\bm{v}_1}{-\mathcal{L}^*\bm{e}_2}$}{}}
In this case case, using \eqref{eq:intbypar}, it is obtained:
$$
-\inner[L^2(\Omega, \bbA)]{\bm{v}_1}{\mathcal{L}^*\bm{e}_2} = -\inner[L^2(\Omega, \bbB)]{\mathcal{L}\bm{v}_1}{\bm{e}_2} + \inner[\partial \Omega]{\Gamma_0 \bm{v}_1}{\Gamma_\perp \bm{e}_2}.
$$
The boundary variable $\bm{e}_\partial := \Gamma_\perp \bm{e}_2$ in~\eqref{eq:stdir_bound} explicitly appears. Then the equation defining the corresponding $\bm{f}_\partial := \Gamma_0 \bm{e}_1$ is put into weak form to obtain the final system for all smooth test functions $\bm{v}_1$, $\bm{v}_2$, and $\bm{v}_\partial$:
\begin{equation}\label{eq:weak_intJ1}
\begin{aligned}
\inner[L^2(\Omega, \bbA)]{\bm{v}_1}{\bm{f}_1} &= -\inner[L^2(\Omega, \bbB)]{\mathcal{L}\bm{v}_1}{\bm{e}_2} + \inner[\partial \Omega]{\Gamma_0 \bm{v}_1}{ \bm{e}_\partial}, \\
\inner[L^2(\Omega, \bbB)]{\bm{v}_2}{\bm{f}_2} &= \inner[L^2(\Omega, \bbB)]{\bm{v}_2}{\mathcal{L}\bm{e}_1}, \\
\inner[\partial \Omega]{\bm{v}_\partial}{\bm{f}_\partial} &= -\inner[\partial \Omega]{\bm{v}_\partial}{\Gamma_0\bm{e}_1}.
\end{aligned}
\end{equation}
Now, a Galerkin discretization is introduced. Test, energy and co-energy functions with the same subscript are discretized using the same basis, for all $t\ge0$:
\begin{equation}\label{eq:approx_vfeb}
\begin{aligned}
\bm{\square}_1(t,\bm{x}) &\approx \bm{\square}_1^d(t,\bm{x}) := \sum_{i=1}^{N_1} \bm{\varphi}_1^i(\bm{x}) \square_1^i(t), &\qquad \forall \bm{x} \in \Omega, \\
\bm{\square}_2(t,\bm{x}) &\approx \bm{\square}_2^d(t,\bm{x}) := \sum_{i=1}^{N_2} \bm{\varphi}_2^i(\bm{x}) \square_2^i(t), &\qquad \forall \bm{x} \in \Omega, \\
\bm{\square}_\partial(t,\bm{s}) &\approx \bm{\square}_\partial^d(t,\bm{s}) := \sum_{i=1}^{N_\partial} \bm{\psi}_\partial^i(\bm{s}) \square_\partial^i(t), &\qquad \forall \bm{s} \in \partial\Omega,
\end{aligned}
\end{equation}
where $\square$ stands for $v$, $f$, and $e$ and $\bm{\varphi}_1^i \in H^{\mathcal{L}}$, $\bm{\varphi}_2^i \in L^2(\Omega,\mathbb{B})$, and $\bm{\psi}_\partial^i \in L^2(\partial\Omega,\mathbb{R}^m)$.

\begin{remark}
In general, a discretization in the same basis of either $(\mathbf{f}_1, \mathbf{e}_1)$ and $(\mathbf{f}_2, \mathbf{e}_2)$ or $(\mathbf{f}_1, \mathbf{f}_2)$ and $(\mathbf{e}_1, \mathbf{e}_2)$ (as done in \cite{kotyczka2018}) must be performed. The former is our choice since it directly leads to square mass matrices, while the latter may be more appropriate when dealing for instance with Maxwell's equations for electromagnetics, see~\cite{PayMatHai20} and references therein for details on the difficulties that may then occur.
\end{remark} 

Then plugging the approximations into \eqref{eq:weak_intJ1}, it is computed:
\begin{equation}\label{eq:stdir_findim_1}
\begin{bmatrix}
\mathbf{M}_1 & \mathbf{0} & \mathbf{0} \\
\mathbf{0} & \mathbf{M}_2 & \mathbf{0} \\
\mathbf{0} & \mathbf{0} & \mathbf{M}_\partial
\end{bmatrix}
\begin{pmatrix}
{\mathbf{f}}_{1} \\
{\mathbf{f}}_{2} \\
{\mathbf{f}_\partial}
\end{pmatrix}
= \begin{bmatrix}
\mathbf{0} & - \mathbf{D}_{\mathcal{L}}^\top &  \mathbf{B}_\perp\\
\mathbf{D}_{\mathcal{L}} & \mathbf{0} & \mathbf{0} \\
- \mathbf{B}_\perp^\top & \mathbf{0} & \mathbf{0}
\end{bmatrix} 
\begin{pmatrix}
\mathbf{e}_{1} \\
\mathbf{e}_{2} \\
\mathbf{e}_\partial
\end{pmatrix},
\end{equation}
where vectors $\mathbf{f}_{1}$, $\mathbf{f}_{2}$, $\mathbf{e}_{1}$, $\mathbf{e}_{2}$, $\mathbf{f}_\partial$, and $\mathbf{e}_\partial$ are given by the column-wise concatenation of the respective degrees of freedom of $\bm{f}_{1}^d$, $\bm{f}_{2}^d$, $\bm{e}_{1}^d$, $\bm{e}_{2}^d$, $\bm{f}_\partial^d$, and $\bm{e}_\partial^d$, and where the matrices are defined as follows:
\begin{equation}
\begin{aligned}
M_1^{ij} &= \inner[L^2(\Omega, \mathbb{A})]{\bm{\varphi}_1^i}{\bm{\varphi}_1^j}, \\
M_2^{mn} &= \inner[L^2(\Omega, \mathbb{B})]{\bm{\varphi}_2^m}{\bm{\varphi}_2^n}, \\
M_\partial^{lk} &= \inner[\partial \Omega]{\bm{\psi}_\partial^l}{\bm{\psi}_\partial^k},
\end{aligned}
\quad
\begin{aligned}
{D}_{\mathcal{L}}^{mi} &= \inner[L^2(\Omega, \mathbb{B})]{\bm{\varphi}_2^m}{\mathcal{L}\bm{\varphi}_1^i}, \\
{B}_\perp^{ik} &= \inner[\partial \Omega]{\Gamma_0\bm{\varphi}_1^i}{ \bm{\psi}_\partial^k},
\end{aligned}
\end{equation}
where $1 \le i, j \le N_1$, $1 \le m, n \le N_2$, and $1 \le l, k \le N_\partial$. System~\eqref{eq:stdir_findim_1} is a kernel representation of a Dirac structure as in~\eqref{eq:finitePH-Dirac} (see Remark~\ref{rem:MassMatrices}).

\begin{remark}
Note that matrices $\mathbf{D}_{\mathcal{L}}$ and $\mathbf{B}_\perp$ are not square.
\end{remark}

The discrete Hamiltonian is naturally defined as the continuous one evaluated in the discrete energy variables. As done in Section~\ref{sec:generalFramework}, it is easier to distinguish the linear hyperbolic from the parabolic case.

\subparagraph{Hyperbolic case}

In this setting, the flows $\bm{f}_i$, $i=1, 2$, are given by the time derivative of the energy variables $\bm{\alpha}_i$. Hence, the discretization of these energy variables is given by:
$$
\bm{\alpha}_i^d(t,\bm{x}) = \bm{\alpha}_i^d(0,\bm{x}) + \int_0^t \bm{f}_i^d(s,\bm{x}) {\rm~d} s, \qquad i = 1, 2.
$$
The discrete Hamiltonian is then defined by $H^d(\underline{\alpha}_1,\underline{\alpha}_2) := H(\bm{\alpha}_1^d, \bm{\alpha}_2^d)$, where $\underline{\alpha}_1$ and $\underline{\alpha}_2$ are the column-wise concatenation of the time varying coefficients of $\bm{\alpha}_1^d$ and $\bm{\alpha}_2^d$ in their respective basis.

\begin{definition}
The discretization of the constitutive relations is said to be \emph{compatible} if and only if:
$$
\nabla H^d = \begin{pmatrix} \nabla_{\underline{\alpha}_1} H^d \\ \nabla_{\underline{\alpha}_2} H^d \end{pmatrix} 
= \begin{bmatrix}
\mathbf{M}_1 & \bm{0} \\ \bm{0} & \mathbf{M}_2
\end{bmatrix}
\begin{pmatrix} \mathbf{e}_1 \\ \mathbf{e}_2 \end{pmatrix}.
$$
\end{definition}

\begin{proposition}\label{prop:PBhyperbolic}
If the discretisation of the constitutive relations is compatible, the discrete power balance reads at the discrete level:
$$
\diff{}{t} H^d = - ( \mathbf{e}_\partial )^\top ~ \mathbf{M}_\partial \mathbf{f}_\partial,
$$
which perfectly mimics the continuous identity.
\end{proposition}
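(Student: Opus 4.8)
The plan is to differentiate the discrete Hamiltonian along trajectories of the discrete system \eqref{eq:stdir_findim_1} and then feed in the two structural ingredients at hand: the skew-symmetry of the discrete interconnection matrix (inherited from the Stokes--Dirac structure through the integration by parts \eqref{eq:intbypar}), and the compatibility hypothesis relating $\nabla H^d$ to the mass matrices. First I would use that in the hyperbolic case the interior flows are the time derivatives of the coefficient vectors of the energy variables, $\mathbf{f}_i = \diff{}{t}\underline{\alpha}_i$ for $i=1,2$. The chain rule then gives $\diff{}{t} H^d = (\nabla_{\underline{\alpha}_1} H^d)^\top \mathbf{f}_1 + (\nabla_{\underline{\alpha}_2} H^d)^\top \mathbf{f}_2$.

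Next I would invoke the compatibility of the discretization, which replaces each $\nabla_{\underline{\alpha}_i} H^d$ by $\mathbf{M}_i \mathbf{e}_i$. Because the mass matrices are symmetric, this yields $\diff{}{t} H^d = \mathbf{e}_1^\top \mathbf{M}_1 \mathbf{f}_1 + \mathbf{e}_2^\top \mathbf{M}_2 \mathbf{f}_2$. I would then substitute the first two block rows of \eqref{eq:stdir_findim_1}, namely $\mathbf{M}_1 \mathbf{f}_1 = -\mathbf{D}_{\mathcal{L}}^\top \mathbf{e}_2 + \mathbf{B}_\perp \mathbf{e}_\partial$ and $\mathbf{M}_2 \mathbf{f}_2 = \mathbf{D}_{\mathcal{L}} \mathbf{e}_1$. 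The two interior terms that arise are $-\mathbf{e}_1^\top \mathbf{D}_{\mathcal{L}}^\top \mathbf{e}_2$ and $\mathbf{e}_2^\top \mathbf{D}_{\mathcal{L}} \mathbf{e}_1$; being scalars they equal their own transposes, hence they cancel. This cancellation is the crux of the argument: it is precisely the discrete counterpart of the skew-symmetry encoded in the Stokes--Dirac structure, which is what makes \eqref{eq:stdir_findim_1} a kernel representation of a finite-dimensional Dirac structure. What remains is $\diff{}{t} H^d = \mathbf{e}_1^\top \mathbf{B}_\perp \mathbf{e}_\partial$.

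Finally I would use the third block row, $\mathbf{M}_\partial \mathbf{f}_\partial = -\mathbf{B}_\perp^\top \mathbf{e}_1$, which upon transposing and using the symmetry of $\mathbf{M}_\partial$ reads $\mathbf{e}_1^\top \mathbf{B}_\perp = -\mathbf{f}_\partial^\top \mathbf{M}_\partial$. Substituting gives $\diff{}{t} H^d = -\mathbf{f}_\partial^\top \mathbf{M}_\partial \mathbf{e}_\partial = -(\mathbf{e}_\partial)^\top \mathbf{M}_\partial \mathbf{f}_\partial$, which is the claimed identity. As a consistency check, recalling $\bm{u}_\partial = \bm{e}_\partial$ and $\bm{y}_\partial = -\bm{f}_\partial$, this is exactly the discrete analogue of the continuous power balance $\inner[\partial\Omega]{\bm{y}_\partial}{\bm{u}_\partial}$ in \eqref{eq:powbal_hyp}.

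The computation is essentially routine and I do not expect a genuine obstacle; the work is bookkeeping with transposes. The only points demanding care are keeping track of the symmetry of all three mass matrices (which is guaranteed by their definition as Gram matrices, e.g. $M_\partial^{lk} = \inner[\partial\Omega]{\bm{\psi}_\partial^l}{\bm{\psi}_\partial^k}$) and recognizing that the scalar-transpose trick is what forces the interior terms to annihilate. The conceptual message, as stressed in the text, is that the discrete skew-symmetric interconnection matrix encodes the power balance, so that only the boundary port survives the differentiation of $H^d$.
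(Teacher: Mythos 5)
Your proof is correct and follows essentially the same route as the paper's: chain rule, compatibility to replace $\nabla_{\underline{\alpha}_i} H^d$ by $\mathbf{M}_i \mathbf{e}_i$, symmetry of the mass matrices, and then the Dirac structure of \eqref{eq:stdir_findim_1} to reduce everything to the boundary term. The only difference is that you explicitly unpack the step the paper compresses into ``the Dirac structure has been used,'' namely the cancellation of the two interior terms via the scalar-transpose trick and the substitution of the third block row; this is a faithful elaboration, not a different argument.
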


\begin{proof}
A straightforward computation gives:
$$
\begin{array}{rl}
\diff{}{t} H^d &= ( \nabla_{\underline{\alpha}_1} H^d )^\top \diff{}{t} \underline{\alpha}_1 + ( \nabla_{\underline{\alpha}_2} H^d )^\top \diff{}{t} \underline{\alpha}_2, \\
&= ( \mathbf{M}_1 \mathbf{e}_1 )^\top ~ \mathbf{f}_1 + ( \mathbf{M}_2 \mathbf{e}_2 )^\top ~ \mathbf{f}_2, \\
&= ( \mathbf{e}_1 )^\top ~ \mathbf{M}_1 \mathbf{f}_1 + ( \mathbf{e}_2 )^\top ~ \mathbf{M}_2 \mathbf{f}_2, \\
&= - ( \mathbf{e}_\partial )^\top ~ \mathbf{M}_\partial \mathbf{f}_\partial,
\end{array}
$$
where the symmetry of the mass matrices and the Dirac structure have been used.
\end{proof}

\begin{remark}
In the special case of linear hyperbolic systems, it has been seen that the \emph{co-energy formulation} allows to take the constitutive relations into account directly in the differential equations. Applying PFEM to~\eqref{eq:pHlinhyp} then leads to an ODE, and the constitutive relations are then automatically discretized in a compatible manner.
\end{remark}

\subparagraph{Parabolic case}

In this setting, only the flow $\bm{f}_1$ is the time derivative of the energy variable $\bm{\alpha}_1$. This energy variable is discretized as in the hyperbolic case. The discrete Hamiltonian is then defined by $H^d(\underline{\alpha}_1) := H(\bm{\alpha}_1^d)$.

\begin{definition}
The discretization of the constitutive relation is said to be \emph{compatible} if and only if:
$
\nabla H^d = \mathbf{M}_1 \mathbf{e}_1.
$
\end{definition}

\begin{proposition}\label{prop:PBparabolic}
If the discretisation of the constitutive relations is compatible, the discrete power balance reads at the discrete level:
$$
\diff{}{t} H^d = - \mathbf{e}_2^\top ~ \mathbf{M}_2 \mathbf{f}_2 - \mathbf{e}_\partial^\top ~ \mathbf{M}_\partial \mathbf{f}_\partial,
$$
which perfectly mimics the continuous identity.
\end{proposition}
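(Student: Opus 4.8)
The plan is to follow the same line of reasoning as in the proof of Proposition~\ref{prop:PBhyperbolic}, the only structural difference being that in the parabolic setting the discrete Hamiltonian depends on the single energy variable $\bm{\alpha}_1$, so the chain rule produces only one term. First I would differentiate $H^d(\underline{\alpha}_1)$ with respect to time, obtaining
$$
\diff{}{t} H^d = (\nabla H^d)^\top \diff{}{t} \underline{\alpha}_1,
$$
and then use that $\bm{f}_1 = \partial_t \bm{\alpha}_1$ in this setting, so that the time derivative of the coefficient vector $\underline{\alpha}_1$ is exactly $\mathbf{f}_1$.

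Next I would invoke the compatibility hypothesis $\nabla H^d = \mathbf{M}_1 \mathbf{e}_1$ together with the symmetry of $\mathbf{M}_1$, which rewrites the expression above as $\mathbf{e}_1^\top \mathbf{M}_1 \mathbf{f}_1$. The heart of the argument is then the discrete Dirac structure encoded in the finite-dimensional system~\eqref{eq:stdir_findim_1}: its interconnection matrix $\mathbf{J}_e$ is skew-symmetric, so writing that system as $\mathbf{M}\mathbf{f} = \mathbf{J}_e \mathbf{e}$ and left-multiplying by $\mathbf{e}^\top$ gives $\mathbf{e}^\top \mathbf{M} \mathbf{f} = \mathbf{e}^\top \mathbf{J}_e \mathbf{e} = 0$, that is,
$$
\mathbf{e}_1^\top \mathbf{M}_1 \mathbf{f}_1 + \mathbf{e}_2^\top \mathbf{M}_2 \mathbf{f}_2 + \mathbf{e}_\partial^\top \mathbf{M}_\partial \mathbf{f}_\partial = 0.
$$
Solving this for $\mathbf{e}_1^\top \mathbf{M}_1 \mathbf{f}_1$ and substituting into the previous step yields the claimed balance.

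I do not anticipate any genuine obstacle, since the computation is purely algebraic once the discrete Dirac structure is in hand. The only point that distinguishes this case from the hyperbolic one, and which deserves emphasis, is that here $\mathbf{f}_2$ is \emph{not} the time derivative of an energy variable; consequently the term $-\mathbf{e}_2^\top \mathbf{M}_2 \mathbf{f}_2$ is not absorbed into $\diff{}{t} H^d$ but survives on the right-hand side. This is exactly what makes the discrete identity mimic the continuous power balance~\eqref{eq:powbal_par}, in which the analogous volume term $-\inner[L^2(\Omega)]{\bm{f}_2}{\bm{e}_2}$ appears alongside the boundary pairing.
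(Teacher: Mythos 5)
Your proposal is correct and is precisely the argument the paper intends: the paper's own proof of this proposition simply states that it "can be derived similarly as in the hyperbolic case," and your chain rule, compatibility, and skew-symmetry-of-the-Dirac-structure steps reproduce that hyperbolic computation with the single energy variable $\bm{\alpha}_1$. Your closing remark about $\mathbf{f}_2$ not being a time derivative, so that $-\mathbf{e}_2^\top \mathbf{M}_2 \mathbf{f}_2$ survives on the right-hand side, correctly identifies the only substantive difference from Proposition~\ref{prop:PBhyperbolic}.
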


\begin{proof}
The proof can be derived similarly as in the hyperbolic case.
\end{proof}

\begin{remark}
Of course, an accurate discretization of the implicit constitutive relation $\mathcal G(\bm{f}_2,\bm{e}_2) = 0$ is also required to conclude. This will be illustrated in Section~\ref{sec:heat}.
\end{remark}

\paragraph{Integration by parts of the term \texorpdfstring{$\inner[L^2(\Omega, \bbB)]{\bm{v}_2}{\mathcal{L}\bm{e}_1}$}{}}
Using~\eqref{eq:intbypar}, it comes:
$$
\inner[L^2(\Omega, \bbB)]{\bm{v}_2}{\mathcal{L}\bm{e}_1} = \inner[L^2(\Omega, \bbA)]{\mathcal{L}^*\bm{v}_2}{\bm{e}_1} + \inner[\partial \Omega]{\Gamma_\perp \bm{v}_2}{\Gamma_0 \bm{e}_1}.
$$
Now the boundary variable $\bm{e}_\partial := \Gamma_0 \bm{e}_1$ explicitly appears, {\it i.e.} the causality considered in~\eqref{eq:stdir_bound_bis}. The weak formulation then reads:
\begin{equation}\label{eq:weak_intJ2}
\begin{aligned}
\inner[L^2(\Omega, \bbA)]{\bm{v}_1}{\bm{f}_1} &= -\inner[L^2(\Omega, \bbA)]{\bm{v}_1}{\mathcal{L}^*\bm{e}_2}, \\
\inner[L^2(\Omega, \bbB)]{\bm{v}_2}{\bm{f}_2} &= \inner[L^2(\Omega, \bbA)]{\mathcal{L}^*\bm{v}_2}{\bm{e}_1} - \inner[\partial \Omega]{\Gamma_\perp \bm{v}_2}{\bm{e}_\partial}, \\
\inner[\partial \Omega]{\bm{v}_\partial}{\bm{f}_\partial} &= \inner[\partial \Omega]{\bm{v}_\partial}{ \Gamma_\perp\bm{e}_2}.
\end{aligned}
\end{equation}
Plugging the approximations~\eqref{eq:approx_vfeb} into~\eqref{eq:weak_intJ2}, this time with $\bm{\varphi}_1^i \in L^2(\Omega,\mathbb{A})$, $\bm{\varphi}_2^i \in H^{-\mathcal{L}^*}$, and $\bm{\psi}_\partial^i \in L^2(\partial\Omega,\mathbb{R}^m)$, gives the following kernel representation of a finite-dimensional Dirac structure:
\begin{equation}\label{eq:stdir_findim_2}
\begin{bmatrix}
\mathbf{M}_1 & \mathbf{0} & \mathbf{0} \\
\mathbf{0} & \mathbf{M}_2 & \mathbf{0} \\
\mathbf{0} & \mathbf{0} & \mathbf{M}_\partial \\
\end{bmatrix}
\begin{pmatrix}
{\mathbf{f}}_{1} \\
{\mathbf{f}}_{2} \\
{\mathbf{f}_\partial} \\
\end{pmatrix}
= \begin{bmatrix}
\mathbf{0} & \mathbf{D}_{-\mathcal{L}^*} & \mathbf{0}\\
-\mathbf{D}_{-\mathcal{L}^*}^\top & \mathbf{0} & \mathbf{B}_0 \\
\mathbf{0} & -\mathbf{B}_0^\top & \mathbf{0}
\end{bmatrix} 
\begin{pmatrix}
\mathbf{e}_{1} \\
\mathbf{e}_{2} \\
\mathbf{e}_\partial \\
\end{pmatrix},
\end{equation}
where the matrices $\mathbf{D}_{-\mathcal{L}^*}$ and $\mathbf{B}_0$ are defined by:
\begin{equation}
{D}_{-\mathcal{L}^*}^{im} = \inner[L^2(\Omega, \mathbb{A})]{\bm{\varphi}_1^i}{\mathcal{-L^*}\bm{\varphi}_2^m}, \quad B_0^{mk} = \inner[\partial\Omega]{\Gamma_\perp\bm{\varphi}_2^m}{\bm{\psi}_{\partial}^k}.
\end{equation}
The power balances proven above still hold true with this causality, where the role played by $\bm{e}_\partial$ and $\bm{f}_\partial$ have been switched. \\

In the sequel, this methodology is applied to the wave equation, the Mindlin-Reissner plate model and the heat equation. These models have been chosen to demonstrate the versatility of our methodology. The wave equation is the prototype of linear hyperbolic systems, and the first example treated by PFEM \cite{cardoso2018pfem}. The Mindlin model combines wave dynamics and plane elastodynamics, and requires the introduction of tensor-valued variables. Finally, the heat equation is the prototype of parabolic systems, and leads to a pHs with intrinsic algebraic constraint, namely, to a pHDAE.

\subsection{The wave equation}\label{sec:wave}

The wave equation is a well-known model, used as the first example of linear hyperbolic systems in many lecture notes and books. This work is no exception to the rule. However, to account for more realistic physics, let us consider the heterogeneous and anisotropic multidimensional wave equation. The equation reads (see~\cite{KurZwa15}):
\begin{equation}
\label{eq:clWave}
\rho \diffp[2]{w}{t} = \mathrm{div}(\bm{\mathcal{T}} \grad(w)) + f, \quad (\bm{x}, t) \in \Omega \times [0, t_f], \qquad \Omega \subset \mathbb{R}^N,
\end{equation}
where $\rho$ is the mass density (bounded from above and below), $\bm{\mathcal{T}}$ is the \emph{tensorial} Young modulus (symmetric and positive definite) and $w$ is the deflection from the equilibrium. The field $f$ accounts for distributed force, such as gravity.

Let us denote $\alpha_p := \rho \diffp{w}{t}$ the linear momentum and $\bm{\alpha}_q := \grad(w)$ the strain, as energy variables. Hence the Hamiltonian is given as the total energy (summing kinetic and potential energies) by:
\begin{equation}
 \label{eq:HamiltonianWave}
 H = \frac{1}{2} \int_\Omega \left\{ \frac{\alpha_p^2}{\rho} + \bm{\alpha}_q \cdot (\bm{\mathcal{T}} \bm{\alpha}_q) \right\} \rm{d} \Omega.
\end{equation}
The co-energy variables are by definition the variational derivatives of $H$ with respect to the energy variables, {\it i.e.}:
\begin{equation}
 \label{eq:constitutiveWave}
e_p := \delta_{\alpha_p} H = \frac{\alpha_p}{\rho} = \diffp{w}{t}, \qquad \bm{e}_q := \delta_{\bm{\alpha}_q} H = \bm{\mathcal{T}} \bm{\alpha}_q = \bm{\mathcal{T}} \grad(w),
\end{equation}
the velocity and stress.
With these notations, equation~\eqref{eq:clWave} rewrites:
\begin{equation}
\label{eq:pHsWave}
\diffp{}{t}
\begin{pmatrix}
\alpha_p \\ \bm{\alpha}_q
\end{pmatrix}
=
\begin{bmatrix}
0 & \div \\ \grad & 0
\end{bmatrix}
\begin{pmatrix}
e_p \\
\bm{e}_q
\end{pmatrix},
\end{equation}
together with the constitutive relations given in~\eqref{eq:constitutiveWave}.

Let us denote:
$$
\bm{e}_1 := e_p, \quad \bm{e}_2 := \bm{e}_q, \quad \mathcal{M}_1 := \rho, \quad \mathcal{M}_2 := \bm{\mathcal{T}}^{-1}, \quad \mathcal{L} := \grad, \quad \Gamma_\perp := \gamma_\perp = \gamma_0 \cdot \bm{n}, \quad \Gamma_0 := \gamma_0,
$$
where $\gamma_0$ is the Dirichlet trace operator. Then the Hamiltonian~\eqref{eq:HamiltonianWave} rewrites as~\eqref{eq:H_coenergy}.  The wave equation~\eqref{eq:clWave} with Neumann boundary control $\bm{u}_\partial = \gamma_\perp (\bm{e}_q)$ is given by~\eqref{eq:pHlinhyp}.

The application of PFEM directly gives:
\begin{equation}
\label{eq:PFEMWave}
\begin{array}{l}
\begin{bmatrix}
\mathbf{M}_\rho & 0 \\ 0 & \mathbf{M}_{\bm{\mathcal{T}}^{-1}}
\end{bmatrix}
\displaystyle\diff{}{t}
\begin{pmatrix}
\mathbf{e}_1 \\ \mathbf{e}_2
\end{pmatrix}
=
\begin{bmatrix}
0 & -\mathbf{D}_{\grad}^\top \\ \mathbf{D}_{\grad} & 0
\end{bmatrix}
\begin{pmatrix}
\mathbf{e}_1 \\
\mathbf{e}_2
\end{pmatrix}
+
\begin{bmatrix}
\mathbf{B}_\perp \\ 0
\end{bmatrix} \mathbf{u}_\partial \vspace{3pt}\\
\mathbf{M}_\partial ~ \mathbf{y}_\partial
=
\begin{bmatrix}
\mathbf{B}_\perp^\top & 0
\end{bmatrix}
\begin{pmatrix}
\mathbf{e}_1 \\
\mathbf{e}_2
\end{pmatrix},
\end{array}
\end{equation}
where:
$$
M_\rho^{ij} := \inner[L^2(\Omega,\mathbb{R})]{\bm{\varphi}_1^i}{\rho ~ \bm{\varphi}_1^j}, \quad 
M_{\bm{\mathcal{T}}^{-1}}^{kl} := \inner[L^2(\Omega,\mathbb{R}^N)]{\bm{\varphi}_2^k}{{\bm{\mathcal{T}}^{-1}} \bm{\varphi}_2^l},
$$
are the discretizations of the operators $\mathcal{M}_1$ and $\mathcal{M}_2$ respectively.

\subsection{The Mindlin plate model}\label{sec:min}
The Mindlin model is a generalization to the 2D case of the Timoshenko beam model and is expressed by a system of two coupled PDEs (see \cite{timoshenko1959theory}):
\begin{equation}
\label{eq:clMin}
\begin{cases}
\displaystyle \rho b \diffp[2]{w}{t} &= \mathrm{div}(\bm{q}) + f, \quad (\bm{x}, t) \in \Omega \times [0, t_f], \qquad \Omega \subset \mathbb{R}^2, \vspace{3pt}\\
\displaystyle \frac{\rho b^3}{12} \diffp[2]{\bm \theta}{t} &= \bm{q} + \mathrm{Div}(\bm M) + \bm{\tau}, \\
\end{cases}
\end{equation}
where $\rho$ is the mass density, $b$ the plate thickness, $w$ the vertical displacement, $\bm \theta = (\theta_x, \theta_y)^\top$ collects the deflection of the cross section along axes $x$ and $y$ respectively. The fields $f, \bm{\tau}$ represent distributed forces and torques. Variables $\bm{M}, \bm{q}$ represent the momenta tensor and the shear stress. Hooke's law relates those to the curvature tensor and shear deformation vector:
\begin{equation*}
\begin{aligned}
\bm{M} &:= \bm{\mathcal{D}}_b \bm{K} \in \mathbb{S}, \\ \bm{q} &:= {D}_s \bm{\gamma},
\end{aligned} \qquad
\begin{aligned}
\bm{K} &:= \mathrm{Grad}(\bm{\theta}) \in \mathbb{S}, \\ \bm{\gamma} &:= \mathrm{grad}(w) - \bm{\theta}.
\end{aligned}
\end{equation*}
${D}_s:=\frac{E_Y b K_{\text{sh}}}{2(1+\nu)}$ is the shear rigidity coefficient, where $E_Y$ is the Young modulus, $\nu$ is the Poisson modulus, $K_{\text{sh}}$ is the shear correction factor. Tensor $\bm{\mathcal{D}}_b$ is the bending stiffness:
\begin{equation}
\label{eq:bend_rig_tensor}
\bm{\mathcal{D}}_b (\cdot) = \frac{E_Y b^3}{12 (1 - \nu^2)}[(1-\nu)(\cdot) + \nu \Tr(\cdot)].
\end{equation}
An appropriate selection of the energy variables is the following \cite{brugnoli2019mindlin,brug:20}:
\begin{equation}
\alpha_w= \rho b\partial_{t} w, \qquad
\bm{\alpha}_{\theta} := \frac{\rho b^3}{12}\partial_{t} \bm{\theta}, \qquad
\bm{A}_{\kappa} := \bm{K}, \qquad
\bm{\alpha}_{\gamma} := \bm{\gamma}. 
\end{equation}
The Hamiltonian $H$ (total energy) is expressed in terms of energy variables as:
\begin{equation}
H = \frac{1}{2} \int_{\Omega} \left\{ \frac{1}{\rho b} \alpha_w^2 + \frac{12}{\rho b^3} \norm{\bm{\alpha}_\theta}^2 + \bm{A}_\kappa \cddot (\bm{\mathcal{D}}_b \bm{A}_\kappa) + D_s \norm{\bm{\alpha}_\gamma}^2\right\} \Omega,
\end{equation} 
where $\bm{A} \cddot \bm{B}$ denotes the tensor contraction. The co-energy variables are defined as follows:
\begin{equation}
e_w := \delta_{\alpha_w} H = \partial_{t} w, \qquad
\bm{e}_{\theta} := \delta_{\bm\alpha_\theta} H =\partial_{t} \bm{\theta}, \qquad
\bm{E}_{\kappa} := \delta_{\bm{A}_\kappa} H =\bm{M}, \qquad
\bm{e}_{\gamma} := \delta_{\bm\alpha_\gamma} H =\bm{q}. 
\end{equation}
System \eqref{eq:clMin} is then expressed in port-Hamiltonian form as \cite{brugnoli2019mindlin} (forces and torques have been omitted for simplicity):
\begin{equation}\label{eq:phlinsys_Min1}
\diffp{}{t}
\begin{pmatrix}
\alpha_w \\
\bm{\alpha}_\theta \\
\bm{A}_\kappa \\
\bm{\alpha}_{\gamma} \\
\end{pmatrix} = 
\begin{bmatrix}
	0 & 0 & 0 & \div \\
	\bm{0} & \bm{0} & \Div & \bm{I}_{2 \times 2}\\
	\bm{0} & \Grad & \bm{0} & \bm{0}\\
	\grad & -\bm{I}_{2 \times 2} & \bm{0} & \bm{0} \\
	\end{bmatrix}
\begin{pmatrix}
e_w \\
\bm{e}_{\theta} \\
\bm{E}_{\kappa} \\
\bm{e}_{\gamma} \\
\end{pmatrix}.
\end{equation}
By applying the divergence theorem, the energy rate is expressed as the duality product of the boundary variables:
\begin{equation}
\begin{aligned}
\diff{H}{t} &= \inner[L^2(\Omega, \bbR)]{\partial_t \alpha_w}{e_w} + \inner[L^2(\Omega, \bbR^2)]{\partial_t \bm\alpha_\theta}{\bm{e}_\theta} +  \inner[L^2(\Omega, \mathbb{S})]{\partial_t \bm{A}_\kappa}{\bm{E}_\kappa} +  \inner[L^2(\Omega, \bbR^2)]{\partial_t \bm\alpha_\gamma}{\bm{e}_\gamma}, \\
&= \inner[\partial\Omega]{\gamma_0 e_w}{\gamma_\perp \bm{e}_\gamma} + \inner[\partial\Omega]{\bm\gamma_0 \bm{e}_\theta}{\bm\gamma_\perp \bm{E}_\kappa}
= \inner[\partial\Omega]{\bm{y}_{\partial, 1}}{\bm{u}_{\partial, 1}} + \inner[\partial\Omega]{\bm{y}_{\partial, 2}}{\bm{u}_{\partial, 2}},
\end{aligned}
\end{equation}
where:
\begin{equation*}
\begin{aligned}
    \bm{u}_{\partial, 1} &= \gamma_\perp \bm{e}_\gamma, \\
    \bm{u}_{\partial, 2} &= \bm\gamma_\perp \bm{E}_\kappa,
\end{aligned} \qquad 
\begin{aligned}
    \bm{y}_{\partial, 1} &= \gamma_0 e_w, \\
    \bm{y}_{\partial, 2} &= \bm\gamma_0 \bm{e}_\theta.
\end{aligned}
\end{equation*}
The traces $\bm\gamma_0 \bm{u} = \bm{u}\vert_{\partial\Omega}, \; \bm\gamma_\perp \bm{U} = \bm{U} \cdot \bm{n}\vert_{\partial\Omega}$ correspond to the Dirichlet trace for $\bbR^d$ vectors and to the normal trace for $\bbR^{d\times d}$ tensors. The mass operators are given by:
\begin{equation}
\mathcal{M}_1 = \begin{bmatrix}
\rho b & 0 \\
\bm{0} & \frac{\rho b^3}{12}
\end{bmatrix}, \qquad 
\mathcal{M}_2 = \begin{bmatrix}
\bm{\mathcal{D}}_b^{-1} & \bm{0} \\
\bm{0} & D_s^{-1}
\end{bmatrix}.
\end{equation} 
The $\mathcal{L}$, $\Gamma_0$, and $\Gamma_\perp$ operators are:
\begin{equation}
\mathcal{L} = \begin{bmatrix}
\bm{0} & \Grad\\
\grad & -\bm{I}_{2 \times 2}\\
\end{bmatrix}, \qquad
\Gamma_0 = \begin{bmatrix}
\gamma_0 & 0 \\ 
0 & \bm\gamma_0 \\
\end{bmatrix}, \qquad 
\Gamma_\perp = \begin{bmatrix}
0 & \gamma_{\perp} \\
\bm\gamma_{\perp} & 0 \\
\end{bmatrix}.
\end{equation}
Introducing the approximations for the test and co-energy variables:
\begin{equation}{}
\triangle_w = \sum_{i = 1}^{N_w} \phi_w^i \triangle_w^i, \qquad
\bm{\triangle}_\theta = \sum_{i = 1}^{N_\theta} \bm\phi_\theta^i \triangle_\theta^i, \qquad
\text{\Large$\bm{\triangle}$}_\kappa = \sum_{i = 1}^{N_\kappa} \bm\Phi_\kappa^i \triangle_\kappa^i, \qquad
\bm{\triangle}_\gamma = \sum_{i = 1}^{N_\gamma} \bm\phi_\gamma^i \triangle_\gamma^i, 
\end{equation} 
where $\triangle = \{v, \, e\}$, and for the boundary controls:
\begin{equation}
{\square}_{\partial, 1} = \sum_{i = 1}^{N_{\partial, 1}} \psi_{\partial, 1}^i \square_{\partial, 1}^i, \qquad \bm{\square}_{\partial, 2} = \sum_{i = 1}^{N_{\partial, 2}} \bm\psi_{\partial, 2}^i \square_{\partial, 2}^i, \qquad \square = \{v, \, u, \, y\}, \quad \psi_{\partial, 1}^i \in \bbR, \; \bm\psi_{\partial, 2}^i \in \bbR^2,
\end{equation}{}
PFEM can be applied to obtain:
\begin{equation}\label{eq:findim_min}
\begin{aligned}
\mathrm{Diag}\begin{bmatrix}
\mathbf{M}_{\rho h}\\
\mathbf{M}_{I_\theta}\\
\mathbf{M}_{\bm{\mathcal{D}}_b^{-1}}\\
\mathbf{M}_{{D}_s^{-1}} \\
\end{bmatrix}
\begin{pmatrix}
\dot{\mathbf{e}}_{w} \\
\dot{\mathbf{e}}_{\theta} \\
\dot{\mathbf{e}}_{\kappa} \\
\dot{\mathbf{e}}_{\gamma} \\
\end{pmatrix}
&= \begin{bmatrix}
\mathbf{0} & \mathbf{0} & \mathbf{0} & -\mathbf{D}_{\grad}^\top \\
\mathbf{0} & \mathbf{0} & -\mathbf{D}_{\Grad}^\top & -\mathbf{D}_{0}^\top \\
\mathbf{0} & \mathbf{D}_{\Grad} & \mathbf{0} & \mathbf{0}\\
\mathbf{D}_{\grad} & \mathbf{D}_{0} & \mathbf{0} & \mathbf{0}\\
\end{bmatrix} 
\begin{pmatrix}
{\mathbf{e}}_{w} \\
{\mathbf{e}}_{\theta} \\
{\mathbf{e}}_{\kappa} \\
{\mathbf{e}}_{\gamma} \\
\end{pmatrix} + 
\begin{bmatrix}
\mathbf{B}_{\perp, w} & \mathbf{0} \\
\mathbf{0} & \mathbf{B}_{\perp, \theta}\\
\mathbf{0} & \mathbf{0} \\
\mathbf{0} & \mathbf{0} \\
\end{bmatrix}
\begin{pmatrix}
\mathbf{u}_{\partial, 1} \\
\mathbf{u}_{\partial, 2} \\
\end{pmatrix},
\\
\mathrm{Diag}\begin{bmatrix}
\mathbf{M}_{\partial, 1} \\
\mathbf{M}_{\partial, 2} \\
\end{bmatrix}{}
\begin{pmatrix}
\mathbf{y}_{\partial, 1} \\
\mathbf{y}_{\partial, 2} \\
\end{pmatrix} &= \begin{bmatrix}
\mathbf{B}_{\perp, w}^\top & \mathbf{0} & \mathbf{0} & \mathbf{0} \\
\mathbf{0} & \mathbf{B}_{\perp, \theta}^\top & \mathbf{0} & \mathbf{0}\\ 
\end{bmatrix}
\begin{pmatrix}
{\mathbf{e}}_{w} \\
{\mathbf{e}}_{\theta} \\
{\mathbf{e}}_{\kappa} \\
{\mathbf{e}}_{\gamma} \\
\end{pmatrix}.
\end{aligned}
\end{equation}
The notation $\mathrm{Diag}$ denotes a block diagonal matrix. The mass matrices $\mathbf{M}_{\rho h}, \; \mathbf{M}_{I_\theta}, \; \mathbf{M}_{\bm{\mathcal{C}}_b}, \; \mathbf{M}_{{C}_s}$ are computed as:
\begin{equation}
\begin{aligned}
M_{\rho h}^{ij} &= \inner[L^2(\Omega)]{\phi_w^i}{\rho h \phi_w^j}, \\
M_{I_\theta}^{mn} &= \inner[L^2(\Omega, \mathbb{R}^{2})]{\phi_\kappa^m}{I_\theta \phi_\kappa^n},
\end{aligned} \qquad
\begin{aligned}
\quad M_{\bm{\mathcal{D}}_b^{-1}}^{pq} &= \inner[L^2(\Omega, \mathbb{R}^{2\times 2}_{\text{sym}})]{\bm\Phi_\kappa^p}{\bm{\mathcal{D}}_b^{-1} \bm\Phi_\kappa^q}, \\
M_{D_s^{-1}}^{rs} &= \inner[L^2(\Omega, \mathbb{R}^{2})]{\bm\phi_\gamma^r}{D_s^{-1} \bm\phi_\gamma^s}, 
\end{aligned}
\end{equation}
where $i, j \in \{1, N_w\}, \; m, n \in \{1, N_\theta\}, \, p, q \in \{1, N_\kappa\}, \; r, s \in \{1, N_\gamma\}$. Matrices $\mathbf{D}_{\grad}, \; \mathbf{D}_{\Grad}, \; \mathbf{D}_{0}$ assume the form:
\begin{equation}
\begin{aligned}
D_{\grad}^{rj} &= \inner[L^2(\Omega, \mathbb{R}^2)]{\bm\phi_\gamma^r}{\grad \phi_w^j}, \\ D_{\Grad}^{pn} &= \inner[L^2(\Omega, \mathbb{R}^{2\times 2}_{\text{sym}})]{\bm\Phi_\kappa^p}{\Grad \bm\phi_\theta^n},
\end{aligned} \quad
D_{0}^{rn} = -\inner[L^2(\Omega, \mathbb{R}^2)]{\bm\phi_\gamma^r}{\bm\phi_\theta^n}.
\end{equation}
Matrices $\mathbf{B}_w, \; \mathbf{B}_{\bm\theta}, \; \mathbf{M}_{\partial, 1}, \; \mathbf{M}_{\partial, 2}$ are computed as:
\begin{equation}
\begin{aligned}{}
\mathbf{B}_{\perp, w}^{ig} &= \inner[\partial\Omega]{\gamma_0 \phi_w^i}{\psi_{\partial, 1}^g}, \\
\mathbf{B}_{\perp, \theta}^{mg} &= \inner[\partial\Omega]{\bm\gamma_0 \bm\phi_\theta^m}{\bm\psi_{\partial, 2}^g}, \\
\end{aligned} \qquad
\begin{aligned}{}
M_{\partial, 1}^{fg} &= \inner[L^2(\partial \Omega, \mathbb{R}^m)]{\psi_{\partial, 1}^f}{\psi_{\partial, 1}^g}, \\
M_\partial^{lk} &= \inner[L^2(\partial \Omega, \mathbb{R}^m)]{\bm\psi_{\partial, 2}^l}{\bm\psi_{\partial, 2}^k}, \\ 
 \end{aligned} \qquad
\begin{aligned}{}
f, g &\in \{1, N_{\partial, 1}\}, \\
l, k &\in \{1, N_{\partial, 2}\}
\end{aligned}
\end{equation}
The discrete Hamiltonian is then computed as:
\begin{equation}
    H_d = \frac{1}{2}\mathbf{e}_w^\top \mathbf{M}_{\rho h}{\mathbf{e}}_{w} + \frac{1}{2}{\mathbf{e}}_{\theta}^\top \mathbf{M}_{I_\theta} {\mathbf{e}}_{\theta} + \frac{1}{2} {\mathbf{e}}_{\kappa}^\top \mathbf{M}_{\bm{\mathcal{D}}_b^{-1}} \mathbf{e}_{\kappa} + \frac{1}{2} {\mathbf{e}}_{\gamma}^\top \mathbf{M}_{{D}_s^{-1}} {\mathbf{e}}_{\gamma}.
\end{equation}
From system \eqref{eq:findim_min} the discrete energy rate is readily obtained:
\begin{equation}
    \diff{H_d}{t} = \mathbf{y}_{\partial, 1}^\top \mathbf{M}_{\partial, 1} \mathbf{u}_{\partial, 1} + \mathbf{y}_{\partial, 2}^\top \mathbf{M}_{\partial, 2} \mathbf{u}_{\partial, 2}.
\end{equation}
The discrete energy rate then mimics its infinite dimensional counterpart.

\begin{remark}
 Equivalently a purely mixed formulation can be obtained by integrating by parts the third and fourth lines of \eqref{eq:phlinsys_Min1}. In this case, the system of equations gathers together a plane elasticity problem \cite{arnold2014elastodynamics} and a wave equation in mixed form. Conforming finite elements for the plane elasticity system on simplicial meshes have been constructed in \cite{arnold2002mixed}. The simpler PEERS elements based on a weak symmetry formulation have been proposed in \cite{arnold1984peers}. The PEERS elements have been used in \cite{veiga2013} to construct a stable locking-free mixed formulation for the static Mindlin problem.
\end{remark}

\subsection{The heat equation}\label{sec:heat}
The heat equation is the simplest example of parabolic system. Instead of rewriting the well-known PDE under a pHs, a direct pHs modelling is presented, as done in~\cite{SerMatHai19b,SerMatHai19c}. The model is constructed in order to keep apart thermodynamical principles from equations of state. Indeed, the pHs formalism allows to modify the latter, by keeping the structure of the former.

Let $\Omega \subset \mathbb{R}^N$ be a bounded open connected set. Assume that this domain models a rigid body: its volume does not change over time and no chemical reaction is to be found. Let us denote: $\rho$ the mass density, $u$ the internal energy density, $\bm{J}_Q$ the heat flux, $T$ the local temperature, $\beta := \dfrac{1}{T}$ the reciprocal temperature, $s$ the entropy density, $\bm{J}_S := \beta \bm{J}_Q$ the entropy flux, $C_V := \left(\diff{u}{T}\right)_V$ the isochoric heat capacity.

The first law of thermodynamic reads:
\begin{equation}\label{eq:energy}
\rho \diffp{u}{t} = - \div ( \bm{J}_Q ).
\end{equation}
Under the hypothesis of an inert rigid solid, Gibbs formula reads ${\rm d} u = T {\rm d} s$, giving:
\begin{equation}\label{eq:Gibbs}
\diffp{u}{t} = T \diffp{s}{t}.
\end{equation}
Defining $\sigma := \grad \left( \beta \right) \bm{J}_Q$, and seeing $u$ as a function of the entropy density $s$, Gibbs formula~\eqref{eq:Gibbs} gives:
\begin{equation}\label{eq:entropy}
\rho \diffp{s}{t} = - \div ( \bm{J}_S ) + \sigma.
\end{equation}
Then $\sigma$ is the irreversible entropy production.

In this work, the following constitutive equations of state will be assumed:
\begin{itemize}
\item
The rigid body is at room temperature: the Dulong-Petit model is supposed to be satisfied, {\it i.e.} $u = C_V T$, with time-invariant $C_V$;
\item
The thermal conduction is given by Fourier's law, with a symmetric positive tensor $\bm{\lambda}$: $\bm{J}_Q = - \bm{\lambda} \grad(T)$.
\end{itemize}
Thanks to~\eqref{eq:energy} and the equations of state, we easily recover the classical PDE for the temperature $T$: $\rho C_V \diffp{T}{t} = \div( \bm{\lambda} \grad(T) )$. 

The ``$L^2$-energy'' $\left( \int_\Omega \rho C_V T^2 \rm{d} \Omega \right)^\frac{1}{2}$ is commonly used as Hamiltonian for the heat equation. However, it lacks of a thermodynamical meaning. The internal energy would be more accurate for this physical problem, even though it will rise some difficulties. Nevertheless, the pHs formalism allows dealing with it, and PFEM proves to be powerful enough to discretize the system in a structure-preserving manner even for this choice of Hamiltonian.

Let the internal energy be seen as a functional of the local entropy as energy variable: $\alpha_s := \rho s$, then:
$$
H := \int_\Omega \rho u(\alpha_s) \rm{d} \Omega.
$$
The co-energy variable is given by $e_s := \delta_{\alpha_s} H = \frac{{\rm d} \rho u}{{\rm d} \rho s} = T$, the local temperature. Denoting $\bm{e}_S := \bm{J}_S$, one can introduce a new flow variable $\bm{f}_S$ such that:
$$
\begin{pmatrix}
\diffp{\alpha_s}{t} \\ \bm{f}_S
\end{pmatrix}
=
\begin{bmatrix}
0 & -\div \\ -\grad & 0
\end{bmatrix}
\begin{pmatrix}
e_s \\
\bm{e}_S
\end{pmatrix}
+
\begin{pmatrix}
\sigma \\ 0
\end{pmatrix}.
$$
Obviously, $\bm{f}_S = -\grad(e_s)$. In order to get a formally skew-symmetric operator, let us also introduce an \emph{entropy} port $(f_\sigma, e_\sigma)$, such that $e_\sigma = - \sigma$. Then:
\begin{equation}
\label{eq:pHsHeat}
\begin{pmatrix}
\diffp{\alpha_s}{t} \\ \bm{f}_S \\ f_\sigma
\end{pmatrix}
=
\begin{bmatrix}
0 & -\div & -1 \\ -\grad & 0 & 0 \\ 1 & 0 & 0
\end{bmatrix}
\begin{pmatrix}
e_s \\
\bm{e}_S \\
e_\sigma
\end{pmatrix}.
\end{equation}


\begin{remark}\label{rem:Fourier}
As surprising as it can be, in this setting, Fourier's law appears to be stated in a \emph{nonlinear} way: $e_s \bm{e}_S - \bm{\lambda} \bm{f}_S = 0$. This comes from the necessity to express the constitutive relations in function of the flows and efforts appearing in the equation defining the Stokes-Dirac structure.
\end{remark}

\begin{remark}\label{rem:ConstRelEntropy}
Two variables have been added to obtain~\eqref{eq:pHsHeat}, but only one equation naturally appears: $f_\sigma = e_s$. Thus, another equation is needed to close the system: $\mathcal{G}(\bm{f}_2,\bm{e}_2) = 0$. Here, it is given by the definition of the irreversible entropy production $\sigma := \grad(\beta) \bm{J}_Q$, rewritten in the flows and efforts variables. This leads to the \emph{nonlinear} constitutive relation: $\bm{f}_S \bm{e}_S + f_\sigma e_\sigma = 0$.
\end{remark}

\begin{remark}
Usually the system energy is taken to be $\left( \int_\Omega \rho C_V T^2 \rm{d} \Omega \right)^\frac{1}{2}$, that gives rise to the well-known linear diffusive system. At first glance, our approach may be surprising since it leads to a lossless nonlinear differential-algebraic system. There is indeed a major advantage in doing so, in view of the modelling and discretization of complex systems by interconnection of several pHs. For instance, if one wants to interconnect both thermal and mechanical processes, for the energy exchanges to be consistent, physics must be coherent.
When dissipation occurs, through \textit{e.g.} friction or viscosity, the kinetic energy is converted into internal energy. Hence, for a physically meaningful system, the internal energy is indeed the one to consider.
\end{remark}

Let us define:
$$
\bm{f}_1 := \diffp{\alpha_s}{t}, \quad \bm{f}_2 := \begin{pmatrix} \bm{f}_S \\ f_\sigma \end{pmatrix}, \quad \bm{e}_1 := e_s, \quad \bm{e}_2 := \begin{pmatrix} \bm{e}_S \\ e_\sigma \end{pmatrix},
$$
and:
$$
\mathcal{L} := \begin{pmatrix} -\grad \\ 1 \end{pmatrix}, \quad \Gamma_\perp := \begin{pmatrix} -\gamma_\perp & {0} \end{pmatrix}, \quad \Gamma_0 := \gamma_0.
$$
Then, the heat equation~\eqref{eq:pHsHeat} with boundary control $\bm{e}_\partial := \bm{u}_\partial = \Gamma_0 \bm{e}_1 = \gamma_0(e_s)$ and boundary observation $\bm{f}_\partial := -\bm{y}_\partial = \Gamma_\perp \bm{e}_2 = \gamma_\perp(\bm{e}_S)$ rewrites under the form~\eqref{eq:stdir_bound_bis}. Thus PFEM will be applied with an \emph{integration by parts} on the second line in this strategy, leading to~\eqref{eq:stdir_findim_2}, which rewrites with the current variables:
\begin{equation}\label{eq:PFEMheat}
\begin{bmatrix}
\mathbf{M}_s & \bm{0} & \bm{0} & \bm{0} \\
\bm{0} & \mathbf{M}_S & \bm{0} & \bm{0} \\
\bm{0} & \bm{0} & \mathbf{M}_\sigma & \bm{0} \\
\bm{0} & \bm{0} & \bm{0} & \mathbf{M}_\partial
\end{bmatrix}
\begin{pmatrix}
\diff{}{t}\underline{\alpha}_s \\ \mathbf{f}_S \\ \mathbf{f}_\sigma \\ -\mathbf{y}_\partial
\end{pmatrix}
=
\begin{bmatrix}
\bm{0} & \mathbf{D}_{-\div} & -\mathbf{M}_\sigma & \bm{0} \\
-\mathbf{D}_{-\div}^\top & \bm{0} & \bm{0} & \mathbf{B}_0 \\
\mathbf{M}_\sigma & \bm{0} & \bm{0} & \bm{0} \\
\bm{0} & -\mathbf{B}_0^\top & \bm{0} & \bm{0}
\end{bmatrix}
\begin{pmatrix}
\mathbf{e}_s \\ \mathbf{e}_S \\ \mathbf{e}_\sigma \\ \mathbf{u}_\partial
\end{pmatrix},
\end{equation}
where:
$$
\begin{array}{rcl c rcl}
M_s^{ij} &:=& \inner[L^2(\Omega,\mathbb{R})]{\bm{\varphi}_1^i}{\bm{\varphi}_1^j}, 
& M_S^{kl} &:=& \inner[L^2(\Omega,\mathbb{R}^N)]{\bm{\varphi}_S^k}{\bm{\varphi}_S^l}, \\
M_\sigma^{{\widetilde k}{\widetilde l}} &:=& \inner[L^2(\Omega,\mathbb{R}^N)]{\bm{\varphi}_\sigma^{\widetilde k}}{\bm{\varphi}_\sigma^{\widetilde l}}, 
& B_0^{km} &:=& -\inner[L^2(\partial\Omega, \mathbb R)]{\Gamma_\perp\bm{\varphi}_S^k}{\bm{\psi}_{\partial}^m},\\
\end{array}
$$
with $\bm{\varphi}_2^k := \begin{pmatrix}
\bm{\varphi}_S^k \\ \bm{0}
\end{pmatrix}$ if $1 \le k \le N_S$, and $\bm{\varphi}_2^k := \begin{pmatrix}
\bm{0} \\ \bm{\varphi}_\sigma^{k-N_S}
\end{pmatrix}$ if $N_S + 1 \le k \le N_S + N_\sigma$.

To be compatible, the discretizations of the constitutive relations are given as follows:
\begin{itemize}
\item
Dulong-Petit model reads:
$
\mathbf{M}_s \underline{\alpha}_s = \mathbf{M}_{\rho C_V} \mathbf{e}_s,
$
with $M_{\rho C_V}^{ij} := \inner[L^2(\Omega,\mathbb{R})]{\bm{\varphi}_1^i}{\rho ~ C_V ~ \bm{\varphi}_1^j}$;
\item
Following Remark~\ref{rem:Fourier}, Fourier's law reads:
$
\mathbf{\Lambda} ~ \mathbf{f}_S = \mathbf{M}_{e_s} \mathbf{e}_S,
$

with $\bm{\Lambda}^{ij} := \inner[L^2(\Omega,\mathbb{R}^N)]{\bm{\varphi}_S^i}{\bm{\lambda} ~ \bm{\varphi}_S^j}$ and $M_{e_s}^{ij} := \inner[L^2(\Omega,\mathbb{R}^N)]{\bm{\varphi}_S^i}{e_s ~ \bm{\varphi}_S^j}$;
\item
The constitutive law coming from the introduction of the irreversible entropy production, as explained in Remark~\ref{rem:ConstRelEntropy}, is taken into account by:
\begin{equation}\label{eq:discreteEntropyProductionCR}
(\mathbf{e}_S)^\top ~ \mathbf{M}_S \mathbf{f}_S + (\mathbf{e}_\sigma)^\top ~ \mathbf{M}_\sigma \mathbf{f}_\sigma = 0.
\end{equation}
\end{itemize}

\begin{remark}
In Fourier's law, the mass matrix $\mathbf{M}_{e_s}$ depends on the co-energy variable $e_s$. This will rise difficulties for the numerical solution in time.
\end{remark}

To conclude, the structure-preserving property can be appreciated in the following result.
\begin{proposition}
Let $H^d(\underline{\alpha}_s) := H(\alpha_s^d)$ be the discrete Hamiltonian, where $\alpha_s^d$ is the discretization of the energy variable in the basis $\bm{\varphi}_1$. It holds:
$$
\diff{}{t}H^d = \mathbf{u}_\partial^\top ~ \mathbf{M}_\partial \mathbf{y}_\partial,
$$
that is the first law of thermodynamics at the discrete level.
\end{proposition}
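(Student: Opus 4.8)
The plan is to follow the template of Proposition~\ref{prop:PBparabolic}, specialised to the discrete Dirac structure~\eqref{eq:PFEMheat}, the only new ingredients being the extra entropy port $(\mathbf{f}_\sigma,\mathbf{e}_\sigma)$ and the fact that the Hamiltonian is the nonlinear internal energy rather than a quadratic form. The mechanism is the usual one: the skew-symmetry built into the structure matrix of~\eqref{eq:PFEMheat} transports the internal power terms onto the boundary, while the discrete entropy-production relation~\eqref{eq:discreteEntropyProductionCR} cancels the internal dissipation, so that only the boundary supply survives; this is exactly the lossless first law.

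First I would differentiate $H^d(\underline{\alpha}_s)=H(\alpha_s^d)$, whose time dependence is carried solely by $\underline{\alpha}_s$, to get $\diff{}{t}H^d = (\nabla H^d)^\top \diff{}{t}\underline{\alpha}_s$. Compatibility of the discretised Dulong--Petit law means precisely $\nabla H^d = \mathbf{M}_s\mathbf{e}_s$, so by symmetry of $\mathbf{M}_s$,
\[
\diff{}{t}H^d = \mathbf{e}_s^\top \mathbf{M}_s \diff{}{t}\underline{\alpha}_s .
\]
Substituting the first block-row of~\eqref{eq:PFEMheat}, $\mathbf{M}_s \diff{}{t}\underline{\alpha}_s = \mathbf{D}_{-\div}\mathbf{e}_S - \mathbf{M}_\sigma\mathbf{e}_\sigma$, turns this into $\diff{}{t}H^d = \mathbf{e}_s^\top \mathbf{D}_{-\div}\mathbf{e}_S - \mathbf{e}_s^\top \mathbf{M}_\sigma\mathbf{e}_\sigma$.

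Next I would rewrite both terms with the remaining rows, using the skew layout of the structure matrix. The third row gives $\mathbf{M}_\sigma\mathbf{f}_\sigma = \mathbf{M}_\sigma\mathbf{e}_s$, hence $\mathbf{f}_\sigma=\mathbf{e}_s$ (as $\mathbf{M}_\sigma$ is invertible) and $\mathbf{e}_s^\top\mathbf{M}_\sigma\mathbf{e}_\sigma = \mathbf{e}_\sigma^\top\mathbf{M}_\sigma\mathbf{f}_\sigma$; the second row gives $\mathbf{M}_S\mathbf{f}_S = -\mathbf{D}_{-\div}^\top\mathbf{e}_s + \mathbf{B}_0\mathbf{u}_\partial$, so that $\mathbf{e}_s^\top\mathbf{D}_{-\div}\mathbf{e}_S = \mathbf{e}_S^\top\mathbf{D}_{-\div}^\top\mathbf{e}_s = -\mathbf{e}_S^\top\mathbf{M}_S\mathbf{f}_S + \mathbf{e}_S^\top\mathbf{B}_0\mathbf{u}_\partial$. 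Collecting terms,
\[
\diff{}{t}H^d = -\left(\mathbf{e}_S^\top\mathbf{M}_S\mathbf{f}_S + \mathbf{e}_\sigma^\top\mathbf{M}_\sigma\mathbf{f}_\sigma\right) + \mathbf{e}_S^\top\mathbf{B}_0\mathbf{u}_\partial .
\]
The parenthesis vanishes by the discrete entropy-production relation~\eqref{eq:discreteEntropyProductionCR}, leaving $\diff{}{t}H^d = \mathbf{e}_S^\top\mathbf{B}_0\mathbf{u}_\partial$. Finally the fourth row reads $\mathbf{M}_\partial\mathbf{y}_\partial = \mathbf{B}_0^\top\mathbf{e}_S$, so $\mathbf{e}_S^\top\mathbf{B}_0 = \mathbf{y}_\partial^\top\mathbf{M}_\partial$ by symmetry of $\mathbf{M}_\partial$, and since the quantity is scalar, $\diff{}{t}H^d = \mathbf{y}_\partial^\top\mathbf{M}_\partial\mathbf{u}_\partial = \mathbf{u}_\partial^\top\mathbf{M}_\partial\mathbf{y}_\partial$, as claimed.

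The step demanding the most care is the opening one, $\nabla H^d = \mathbf{M}_s\mathbf{e}_s$. Because $H=\int_\Omega \rho u(\alpha_s)\,\mathrm{d}\Omega$ is genuinely nonlinear, the chain-rule differentiation of $H^d$ does not collapse to a mass-matrix product for free; it does so exactly because the discretised Dulong--Petit law $\mathbf{M}_s\underline{\alpha}_s = \mathbf{M}_{\rho C_V}\mathbf{e}_s$ was chosen \emph{compatible} in the sense of the parabolic definition, and I would verify this identity separately. It is worth emphasising that Fourier's law $\mathbf{\Lambda}\,\mathbf{f}_S = \mathbf{M}_{e_s}\mathbf{e}_S$ is never invoked in the balance above: it is needed only to close the system into a solvable pHDAE, whereas the cancellation of the internal power rests entirely on~\eqref{eq:discreteEntropyProductionCR}.
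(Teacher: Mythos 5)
Your proposal is correct and follows essentially the same route as the paper: the paper first invokes Proposition~\ref{prop:PBparabolic} to get $\diff{}{t}H^d = -\mathbf{e}_2^\top\mathbf{M}_2\mathbf{f}_2 - \mathbf{e}_\partial^\top\mathbf{M}_\partial\mathbf{f}_\partial$ and then cancels the internal term via~\eqref{eq:discreteEntropyProductionCR}, whereas you inline that proposition by working directly through the block rows of~\eqref{eq:PFEMheat} — the mechanism (compatibility giving $\nabla H^d=\mathbf{M}_s\mathbf{e}_s$, skew-symmetry of the structure matrix, then the discrete entropy-production relation) is identical. Your closing observations about the compatibility of the Dulong--Petit discretization and the non-role of Fourier's law match the paper's remarks as well.
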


\begin{proof}
Thanks to the compatible discretization of the Dulong-Petit model, Proposition~\ref{prop:PBparabolic} gives:
$$
\diff{}{t} H^d = - \mathbf{e}_2^\top ~ \mathbf{M}_2 \mathbf{f}_2 - \mathbf{e}_\partial^\top ~ \mathbf{M}_\partial \mathbf{f}_\partial.
$$
By definition of $\mathbf{f}_2$, $\mathbf{e}_2$, $\mathbf{M}_2$, $\mathbf{e}_\partial$, and $\mathbf{f}_\partial$ one computes:
$$
\begin{array}{rl}
\diff{}{t} H^d &= - \mathbf{e}_2^\top ~ \mathbf{M}_2 \mathbf{f}_2 - \mathbf{e}_\partial^\top ~ \mathbf{M}_\partial \mathbf{f}_\partial, \\
&= - \begin{pmatrix} \mathbf{e}_S \\ \mathbf{e}_\sigma \end{pmatrix}^\top ~ 
\begin{bmatrix}
\mathbf{M}_S & \bm{0} \\
\bm{0} & \mathbf{M}_\sigma 
\end{bmatrix} \begin{pmatrix} \mathbf{f}_S \\ \mathbf{f}_\sigma \end{pmatrix} + \mathbf{u}_\partial^\top ~ \mathbf{M}_\partial \mathbf{y}_\partial, \\
&= \mathbf{u}_\partial^\top ~ \mathbf{M}_\partial \mathbf{y}_\partial,
\end{array}
$$
thanks to the constitutive relation~\eqref{eq:discreteEntropyProductionCR} coming from the irreversible entropy production.
\end{proof}

\begin{remark}
Fourier's law does not contribute to the power balance of the internal energy. Nevertheless, such a constitutive relation is needed for the problem to be well-defined.
\end{remark}


\begin{remark}
The methodology detailed so far is certainly not limited to the previous three examples. Indeed higher-order differential~\cite{brugnoli2019kirchhoff}, $\text{curl}$ operator for Maxwell's equations~\cite{PayMatHai20}, nonlinear system~\cite{CarMatLef19}, and different Hamiltonian choices can be handled as well. For instance, in the case of the heat equation, the entropy or the classical $L^2$-norm of the temperature can be alternatively considered as Hamiltonian functional~\cite{SerMatHai19b,SerMatHai19c}. In addition, mixed boundary conditions can be incorporated either by introducing Lagrange multipliers or by employing a virtual domain decomposition method \cite{brugnoli2020wc}. As already mentioned, dissipation (both in the domain and on the boundary) can also be considered in this strategy~\cite{SerMatHai19d,SerMatHai19a}. Hence a very wide class of (nonlinear) multiphysics systems can be discretized in a structure-preserving manner (with well-represented exchanges of energy between the subsystems).
\end{remark}

In the next section we present an ongoing project which has been initiated to prove the efficiency of the PFEM methodology, leveraging well-established and robust software tools for the finite element discretization of partial differential equations and time integration.

\section{SCRIMP: Simulation and ContRol of Interactions in Multi-Physics \label{sec:scrimp}}

In this section the main features related to the numerical simulation of pHs in the framework of the ongoing project named {\sc{SCRIMP}} (Simulation and ContRol of Interactions in Multi-Physics) are detailed. The aim is to provide a flexible prototype {\sc{Python}} code for the numerical simulation of pHs both for research and educational purposes. In addition to numerical experiments proposed later in Section \ref{sec:simus}, the reader is referred to interactive companion {\sc{Jupyter}} notebooks \cite{bhsv:20z} to learn how to numerically solve the model problems introduced in Section \ref{sec:pfem} with {\sc{SCRIMP}}. In the following, the key ideas behind {\sc{SCRIMP}} are mentioned and then a specific emphasis on both space and time discretizations is given. 

\subsection{\label{sec:scrimp:key} Key ideas behind {\sc{SCRIMP}}}

In short, the key ideas related to the design of {\sc{SCRIMP}} are provided:
\begin{itemize}
\item The {\sc{Python}} dynamic programming language has been selected due to its expressiveness and the availability of high-level interfaces to scientific computing software libraries \cite{lila:20}; 
\item {\sc{SCRIMP}} assumes to rely on open-source, external software for the finite dimensional discretization of partial differential equations;
\item {\sc{SCRIMP}} encapsulates the finite dimensional objects related to the finite element discretization in space (e.g. matrices) to deduce the resulting linear or nonlinear pHs in a generic pHODE/pHDAE form as proposed in \cite{beattie2018linear};
\item For multiphysics problems, this design offers the advantage that discretization in space may be handled by different software components depending on the discipline or on the modelling. The modularity and the object-oriented nature of {\sc{Python}} thus offer the flexibility to easily combine the different pHs to deduce the global interconnected system. This is much in line with the mathematical theory of pHs \cite{SchJel14}. Furthermore we note that interconnections of different systems (with e.g. the transformer or gyrator transformations  \cite{SchJel14}) can be easily incorporated. 
\end{itemize}
The design of {\sc{SCRIMP}} is based on procedural and object-oriented paradigms and thus follows the standard ideas governing most of the numerical PDE software. Whereas a detailed exposition of the design patterns of {\sc{SCRIMP}} and its performance will be published elsewhere, concrete illustrations of most of these key ideas can be found in the companion {\sc{Jupyter}} notebooks \cite{bhsv:20z}. The description of the current numerical methods related to space and time discretizations available in {\sc{SCRIMP}} is given.

\subsection{\label{sec:scrimp:space} Semi-discretization in space}

As outlined in Section \ref{sec:pfem}, PFEM relies on an abstract variational formulation written in appropriate finite element spaces. 

To perform the semi-discretization in space, we rely on {\sc{FEniCS}} \cite{AlnaesBlechta2015a}, an open-source {\sc{C++}} scientific software library that provides a high-level {\sc{Python}} interface. The {\sc{FEniCS}} Project is mainly based on a collection of software components targeting the automated solution of partial differential equations via the finite element method. Its core components notably include the Unified Form Language ({\sc{UFL}}) \cite{AlnaesEtAl2012}, the {\sc{FEniCS}} Form Compiler ({\sc{FFC}}) \cite{KirbyLogg2007a} and the finite element library {\sc{DOLFIN}} \cite{LoggWells2010a}, which contains various types of conforming finite element methods, e.g., nodal Lagrangian finite elements for grad-conforming approximations or non non-nodal finite elements (e.g., Raviart-Thomas spaces for div-conforming approximations) 
as well. These families of finite elements are notably required to tackle the discretization in space of our core problems.

A key point to facilitate the generic implementation of PFEM is the use of {\sc{UFL}}. {\sc{UFL}} is indeed an expressive domain-specific language for abstractly representing (finite element) variational formulations of differential equations. In particular, this language defines a syntax for the integration of variational forms over various domains. This simply leads to an expressive implementation that is close to the abstract mathematical formulations presented in Section \ref{sec:pfem}. The {\sc{FEniCS}} Form Compiler {\sc{FFC}} then generates specialized {\sc{C++}} code from the symbolic {\sc{UFL}} representation of variational forms and finite element spaces. The combination of these core elements makes {\sc{FEniCS}} a versatile and efficient software for the finite element approximation of partial differential equations as outlined in \cite{LoggMardalEtAl2012a}. Additionally, {\sc{FEniCS}} also provides an interface for state-of-the-art linear solvers and preconditioners from freely available third-party libraries such as {\sc{PETSc}} \cite{petsc-user-ref}. 
This last feature may be especially useful to handle the numerical simulation of large-scale pHs.

\subsection{\label{sec:scrimp:time} Time integration methods}

As outlined in Section \ref{sec:pfem}, the  semi-discretization in space of the resulting pHs leads to systems of either ordinary differential equations (ODE) or differential algebraic equations (DAE). Hence reliable and accurate time integration methods must be provided.

To offer a large panel of numerical methods, a high-level interface to well-established time integration libraries is provided in {\sc{SCRIMP}}. Concerning the numerical solution of ODEs, we provide light interfaces to the {\sc{Assimulo}} library \cite{Andersson2015} and to the {\sc{SciPy}} time integration method scipy.integrate.solve\_ivp\footnote{\scriptsize https://docs.scipy.org/doc/scipy/reference/generated/scipy.integrate.solve\_ivp.html\#scipy.integrate.solve\_ivp} that both include standard multistep and one-step methods for stiff and non-stiff ordinary differential equations given in explicit form $y^{'} = f(t, y)$ with $y(t_0) = y_0$ where $t_0$ and $y_0$ denote the initial time and initial condition, respectively. This formulation requires the solution of linear systems of equations involving sparse finite element mass matrices. State-of-the-art sparse direct solvers based on Gaussian factorization are used for that purpose. Through {\sc{Assimulo}}, the popular {\sc{CVODE}}\footnote{\scriptsize https://computing.llnl.gov/sites/default/files/public/cv\_guide.pdf} solver from {\sc{Sundials}} \cite{hindmarsh2005sundials} is also accessible. For nonstiff problems, {\sc{CVODE}} relies on the Adams-Moulton formulas, with the order varying between $1$ and $12$. For stiff problems, {\sc{CVODE}} includes schemes based on Backward Differentiation Formulas (BDFs) with order varying between $1$ and $5$. In addition, we also propose standalone implementations of symplectic time integration methods such as the second order accurate Stormer-Verlet method.

The interface to {\sc{Assimulo}} also allows one to handle the numerical solution of linear DAEs through the use of the {\sc{Sundials}} {\sc{IDA}} solver\footnote{https://computing.llnl.gov/projects/sundials/ida}. {\sc{IDA}} is a package for the solution of differential algebraic equation systems written in the form $F(t, y, y^{'})=0$ with $y(t_0) = y_0$. The integration method in {\sc{IDA}} is based on variable-order, variable-coefficient BDF in fixed-leading-coefficient form, where the method order varies between $1$ and $5$. We note that setting the initial conditions properly is of utmost importance for a DAE solver. To do so, we rely on the {\sc{IDA\_YA\_YDP\_INIT}} method to find consistent initial conditions for the time integration. We refer the reader to \cite[Section 2.3]{hindmarsh2005sundials} for additional details on {\sc{IDA}}. As standalone methods, we have considered the second-order accurate Stormer-Verlet and fourth-order accurate Runge-Kutta (RK4) methods for the solution of linear DAEs.

To the best of our knowledge, open-source libraries for the solution of general nonlinear differential algebraic equations with high-level {\sc{Python}} interfaces are not yet available. Hence a simple forward in time integration method for the solution of the nonlinear pHDAE related to the energy formulation of the heat equation problem has been provided; see \cite{serh:20} for illustrations and discussion. As a future direction, we plan to investigate the potential of the {\sc{PETSc}}'s time stepping library {\sc{TS}} \cite{abhyankar2018petsc} to be able to tackle the solution of large-scale pHDAE systems.

\subsection{\label{sec:scrimp:modred} Model reduction of port-Hamiltonian systems}

Structure-preserving model reduction is of significant importance for stability analysis, optimization or control of problems related to pHs. Hence structure-preserving model reduction algorithms have been implemented in {\sc{SCRIMP}}. In particular, the structure-preserving model reduction algorithm (Algorithm 1) proposed in \cite{chbg:16} has been selected in the pHODE case. We refer the reader to \cite{bhsv:20z} for an illustration, where the model reduction of the pHs related to the wave equation problem is considered. While for linear pHDAE systems consolidated methodologies have been proposed (see, e.g., \cite{eklm:18}), structure-preserving model reduction for general nonlinear differential algebraic
systems remains to be explored, to the best of our knowledge. This is a significant research direction to be considered within {\sc{SCRIMP}} in a near future. 
%
%

\section{Numerical simulations}
\label{sec:simus}

 In this section, PFEM is applied to the pHs presented in Section~\ref{sec:pfem}. We specifically learn
how to define and solve those problems with {\sc{SCRIMP}}. These tutorials introduce the methodology step-by-step and are supposed to be self-contained and independent from the others. We refer the reader to the companion {\sc{Jupyter}} notebooks \cite{bhsv:20z} for additional information.

\let\clearpage\relax

%
%

    \hypertarget{wave:anisotropic-heterogeneous-wave-equation}{%
\subsection{Anisotropic heterogeneous wave
equation}\label{wave:anisotropic-heterogeneous-wave-equation}}

    We first recall the continuous problem related to the anisotropic heterogeneous wave equation, enriched with internal and boundary damping, and tackle the
semi-discretization in space of the port-Hamiltonian system through the
PFEM methodology. This discretization leads to a pHODE formulation as explained in Section \ref{sec:wave}. After time discretization, we perform a numerical simulation to obtain
an approximation of the space-time solution. 

    \hypertarget{wave:problem-statement}{%
\subsubsection{Problem statement}\label{wave:problem-statement}}

We consider the two-dimensional heterogeneous anisotropic wave equation
with impedance boundary condition defined for all $(t\ge 0)$ as:
\begin{eqnarray*}
    \rho(\boldsymbol x)\,\displaystyle \frac{\partial^2}{\partial t^2} w(t,\boldsymbol x) - \text{div}\Big(\bm{\mathcal{T}}(\boldsymbol x)\cdot\textbf{grad} \ w(t,\boldsymbol x)\Big) 	&=& -\epsilon(\boldsymbol x) \, \partial_t w(t,\boldsymbol x), \quad \boldsymbol x \in \Omega, \\
    Z(\boldsymbol x)~(\bm{\mathcal{T}}(\boldsymbol x)\cdot\textbf{grad} \ w(t,\boldsymbol x)) \cdot {\bf{n}} + 
    {\partial_t} w(t,\boldsymbol x) & = & 0, \quad \boldsymbol x \in \partial \Omega, \\ 
    w(0, \boldsymbol x) & = & w_0(x), \quad \boldsymbol x \in \Omega, t=0 \\
    {\partial_t} w(0,\boldsymbol x) & = & w_1(x), \quad \boldsymbol x \in \Omega, t=0,
\end{eqnarray*}
with \(\Omega \subset \mathbb{R}^2\) an open bounded spatial domain with
Lipschitz-continuous boundary \(\partial \Omega\). We
consider here a rectangular shaped domain for \(\Omega\).
\(w(t,\boldsymbol x)\) denotes the deflection from the equilibrium
position at point $\boldsymbol x \in \Omega $ and time \(t\).
\(\rho \in L^{\infty}(\Omega)\) (positive and bounded from below)
denotes the mass density,
\(\bm{\mathcal{T}} \in L^{\infty}(\Omega)^{2\times 2}\) (symmetric
and coercive) the Young's elasticity modulus, $\epsilon$ a positive viscous damping parameter and \(Z(\boldsymbol x)\)
is the positive impedance function defined on \(\partial \Omega\), respectively.

    \hypertarget{wave:setup}{%
\subsubsection{Setup}\label{wave:setup}}

    We initialize here the Python object related to the \verb|Wave_2D| class of
{\sc{SCRIMP}}. This object will be used throughout this section.
    \begin{tcolorbox}[breakable, size=fbox, boxrule=1pt, pad at break*=1mm,colback=cellbackground, colframe=cellborder, enlarge top by=0.25em, enlarge bottom by=0.5em, enlarge top by=0.25em, enlarge bottom by=0.5em]
\begin{Verbatim}[commandchars=\\\{\}]
\PY{n}{W} \PY{o}{=} \PY{n}{SCRIMP}\PY{o}{.}\PY{n}{Wave\PYZus{}2D}\PY{p}{(}\PY{p}{)}
\end{Verbatim}
\end{tcolorbox}

    \hypertarget{wave:constants}{%
\subsubsection{Constants}\label{wave:constants}}

We define the constants related to the rectangular
domain \(\Omega\). The coordinates of the bottom left (\(x_0, y_0\)) and
top right (\(x_L, y_L\)) corners of the rectangle are required.

\begin{tcolorbox}[breakable, size=fbox, boxrule=1pt, pad at break*=1mm,colback=cellbackground, colframe=cellborder, enlarge top by=0.25em, enlarge bottom by=0.5em]
\begin{Verbatim}[commandchars=\\\{\}]
\PY{n}{x0}\PY{p}{,} \PY{n}{xL}\PY{p}{,} \PY{n}{y0}\PY{p}{,} \PY{n}{yL} \PY{o}{=} \PY{l+m+mf}{0.}\PY{p}{,} \PY{l+m+mf}{2.}\PY{p}{,} \PY{l+m+mf}{0.}\PY{p}{,} \PY{l+m+mf}{1.}
\end{Verbatim}
\end{tcolorbox}
We then define the time interval related to the time discretization.
\(t_i, t_f\) denote the initial and final time instants respectively.

\begin{tcolorbox}[breakable, size=fbox, boxrule=1pt, pad at break*=1mm,colback=cellbackground, colframe=cellborder, enlarge top by=0.25em, enlarge bottom by=0.5em]
\begin{Verbatim}[commandchars=\\\{\}]
\PY{n}{ti}\PY{p}{,} \PY{n}{tf}  \PY{o}{=} \PY{l+m+mf}{0.}\PY{p}{,} \PY{l+m+mf}{5.}
\end{Verbatim}
\end{tcolorbox}
We specify that we choose the {\sc{Assimulo}} external library to be used
later for the time integration of the resulting ODE and provide the value of the time step. This should be
considered as a reference value since adaptative methods in time can be
used later.

\begin{tcolorbox}[breakable, size=fbox, boxrule=1pt, pad at break*=1mm,colback=cellbackground, colframe=cellborder, enlarge top by=0.25em, enlarge bottom by=0.5em]
\begin{Verbatim}[commandchars=\\\{\}]
\PY{n}{dt}           \PY{o}{=} \PY{l+m+mf}{1.e\PYZhy{}3}
\PY{n}{ode\PYZus{}library}  \PY{o}{=} \PY{l+s+s1}{\PYZsq{}}\PY{l+s+s1}{ODE:Assimulo}\PY{l+s+s1}{\PYZsq{}}
\end{Verbatim}
\end{tcolorbox}

    \hypertarget{wave:fenics-expressions-definition}{%
\subsubsection{FEniCS expressions
definition}\label{wave:fenics-expressions-definition}}

    For the finite element discretization of the pHs, the {\sc{FEniCS}} library is used in the \verb|Wave_2D| class of {\sc{SCRIMP}}.
Hence to properly use {\sc{FEniCS}} expression definition, we provide the
definition of the different variables in {\sc{C++}} code given in strings. We
first specify the mass density as a function depending on the space
coordinates. Hence in this expression, $x{[}0{]}$ corresponds to the first
spatial variable and $x{[}1{]}$ to the second one, respectively.
\begin{tcolorbox}[breakable, size=fbox, boxrule=1pt, pad at break*=1mm,colback=cellbackground, colframe=cellborder, enlarge top by=0.25em, enlarge bottom by=0.5em]
\begin{Verbatim}[commandchars=\\\{\}]
\PY{n}{Rho}    \PY{o}{=} \PY{l+s+s1}{\PYZsq{}}\PY{l+s+s1}{x[0]*x[0] * (2.\PYZhy{}x[0])+ 1}\PY{l+s+s1}{\PYZsq{}}
\end{Verbatim}
\end{tcolorbox}
We specify the Young's elasticity modulus tensor. Three components are
only required due to the symmetry property of this tensor.

    \begin{tcolorbox}[breakable, size=fbox, boxrule=1pt, pad at break*=1mm,colback=cellbackground, colframe=cellborder, enlarge top by=0.25em, enlarge bottom by=0.5em]
\begin{Verbatim}[commandchars=\\\{\}]
\PY{n}{T11}    \PY{o}{=} \PY{l+s+s1}{\PYZsq{}}\PY{l+s+s1}{x[0]*x[0]+1}\PY{l+s+s1}{\PYZsq{}}
\PY{n}{T12}    \PY{o}{=} \PY{l+s+s1}{\PYZsq{}}\PY{l+s+s1}{x[1]}\PY{l+s+s1}{\PYZsq{}}
\PY{n}{T22}    \PY{o}{=} \PY{l+s+s1}{\PYZsq{}}\PY{l+s+s1}{x[0]+2}\PY{l+s+s1}{\PYZsq{}}
\end{Verbatim}
\end{tcolorbox}
We finally set the impedance function \(Z\) defined on the boundary
of the domain. Here a constant value is used on \(\partial \Omega\). We
also provide the viscous damping parameter (\verb|eps|).

    \begin{tcolorbox}[breakable, size=fbox, boxrule=1pt, pad at break*=1mm,colback=cellbackground, colframe=cellborder, enlarge top by=0.25em, enlarge bottom by=0.5em]
\begin{Verbatim}[commandchars=\\\{\}]
\PY{n}{Z}   \PY{o}{=} \PY{l+s+s1}{\PYZsq{}}\PY{l+s+s1}{0.1}\PY{l+s+s1}{\PYZsq{}}
\PY{n}{eps} \PY{o}{=} \PY{l+s+s1}{\PYZsq{}}\PY{l+s+s1}{25 * x[0] * (xL\PYZhy{}x[0]) * x[1] * (yL\PYZhy{}x[1])}\PY{l+s+s1}{\PYZsq{}}
\end{Verbatim}
\end{tcolorbox}
Finally we specify the initial conditions of the problem related to the energy variables and to the deflection.

    \begin{tcolorbox}[breakable, size=fbox, boxrule=1pt, pad at break*=1mm,colback=cellbackground, colframe=cellborder, enlarge top by=0.25em, enlarge bottom by=0.5em]
\begin{Verbatim}[commandchars=\\\{\}]
\PY{n}{Aq\PYZus{}0\PYZus{}1}\PY{o}{=} \PY{l+s+s1}{\PYZsq{}}\PY{l+s+s1}{0}\PY{l+s+s1}{\PYZsq{}}
\PY{n}{Aq\PYZus{}0\PYZus{}2}\PY{o}{=} \PY{l+s+s1}{\PYZsq{}}\PY{l+s+s1}{0}\PY{l+s+s1}{\PYZsq{}}
\PY{n}{Ap\PYZus{}0}  \PY{o}{=} \PY{l+s+s1}{\PYZsq{}}\PY{l+s+s1}{0}\PY{l+s+s1}{\PYZsq{}}
\PY{n}{W\PYZus{}0}   \PY{o}{=} \PY{l+s+s1}{\PYZsq{}}\PY{l+s+s1}{0}\PY{l+s+s1}{\PYZsq{}}
\end{Verbatim}
\end{tcolorbox}

    \hypertarget{wave:problem-at-the-continuous-level}{%
\subsubsection{Problem at the continuous
level}\label{wave:problem-at-the-continuous-level}}

    We are now able to completely define the problem at the continuous
level. We start by specifying that the computational domain \(\Omega\)
is of rectangular shape. To do so, we provide the
coordinates of the bottom left and top right corners to the \verb|Wave_2D|
object.

    \begin{tcolorbox}[breakable, size=fbox, boxrule=1pt, pad at break*=1mm,colback=cellbackground, colframe=cellborder, enlarge top by=0.25em, enlarge bottom by=0.5em]
\begin{Verbatim}[commandchars=\\\{\}]
\PY{n}{W}\PY{o}{.}\PY{n}{Set\PYZus{}Rectangular\PYZus{}Domain}\PY{p}{(}\PY{n}{x0}\PY{p}{,} \PY{n}{xL}\PY{p}{,} \PY{n}{y0}\PY{p}{,} \PY{n}{yL}\PY{p}{)}\PY{p}{;}
\end{Verbatim}
\end{tcolorbox}
\begin{remark}
General {\sc{Gmsh}} meshes can be imported by the user. However, for the time being, the library does not allow the treatment of mixed boundary conditions on generic meshes.
\end{remark}
We provide next the time integration interval.

    \begin{tcolorbox}[breakable, size=fbox, boxrule=1pt, pad at break*=1mm,colback=cellbackground, colframe=cellborder, enlarge top by=0.25em, enlarge bottom by=0.5em]
\begin{Verbatim}[commandchars=\\\{\}]
\PY{n}{W}\PY{o}{.}\PY{n}{Set\PYZus{}Initial\PYZus{}Final\PYZus{}Time}\PY{p}{(}\PY{n}{ti}\PY{p}{,} \PY{n}{tf}\PY{p}{)}\PY{p}{;}
\end{Verbatim}
\end{tcolorbox}
We then provide the physical parameters related to the wave equation:
the mass density, the Young's elasticity modulus tensor and the
impendance function, respectively.

    \begin{tcolorbox}[breakable, size=fbox, boxrule=1pt, pad at break*=1mm,colback=cellbackground, colframe=cellborder, enlarge top by=0.25em, enlarge bottom by=0.5em]
\begin{Verbatim}[commandchars=\\\{\}]
\PY{n}{W}\PY{o}{.}\PY{n}{Set\PYZus{}Physical\PYZus{}Parameters}\PY{p}{(}\PY{n}{Rho}\PY{p}{,} \PY{n}{T11}\PY{p}{,} \PY{n}{T12}\PY{p}{,} \PY{n}{T22}\PY{p}{)}\PY{p}{;}
\end{Verbatim}
\end{tcolorbox}
We then specify the complete modelling for the damping and thus provide
information related to the impedance function and viscous damping parameter, respectively.

    \begin{tcolorbox}[breakable, size=fbox, boxrule=1pt, pad at break*=1mm,colback=cellbackground, colframe=cellborder, enlarge top by=0.25em, enlarge bottom by=0.5em]
\begin{Verbatim}[commandchars=\\\{\}]
\PY{n}{W}\PY{o}{.}\PY{n}{Set\PYZus{}Damping}\PY{p}{(}\PY{n}{damp}\PY{o}{=}\PY{p}{[}\PY{l+s+s1}{\PYZsq{}}\PY{l+s+s1}{impedance}\PY{l+s+s1}{\PYZsq{}}\PY{p}{,} \PY{l+s+s1}{\PYZsq{}}\PY{l+s+s1}{fluid}\PY{l+s+s1}{\PYZsq{}}\PY{p}{]}\PY{p}{,} \PY{n}{Z}\PY{o}{=}\PY{n}{Z}\PY{p}{,} \PY{n}{eps}\PY{o}{=}\PY{n}{eps}\PY{p}{)}\PY{p}{;}
\end{Verbatim}
\end{tcolorbox}
The user has to provide the temporal and spatial parts of the boundary control function (\verb|Ub_tm0| and \verb|Ub_sp0|, respectively).

    \begin{tcolorbox}[breakable, size=fbox, boxrule=1pt, pad at break*=1mm,colback=cellbackground, colframe=cellborder, enlarge top by=0.25em, enlarge bottom by=0.5em]
\begin{Verbatim}[commandchars=\\\{\}]
\PY{n}{W}\PY{o}{.}\PY{n}{Set\PYZus{}Boundary\PYZus{}Control}\PY{p}{(}\PY{n}{Ub\PYZus{}tm0}\PY{o}{=}\PY{k}{lambda} \PY{n}{t}\PY{p}{:}  \PY{n}{np}\PY{o}{.}\PY{n}{sin}\PY{p}{(} \PY{l+m+mi}{2} \PY{o}{*} \PY{l+m+mi}{2}\PY{o}{*}\PY{n}{pi}\PY{o}{/}\PY{n}{tf} \PY{o}{*}\PY{n}{t}\PY{p}{)} \PY{o}{*} \PY{l+m+mi}{25} \PY{p}{,} \PY{n}{Ub\PYZus{}sp0}\PY{o}{=}\PY{l+s+s1}{\PYZsq{}}\PY{l+s+s1}{x[0] * x[1] * (1\PYZhy{}x[1])}\PY{l+s+s1}{\PYZsq{}}\PY{p}{)}\PY{p}{;}
\end{Verbatim}
\end{tcolorbox}
Finally we provide the initial conditions for the ODE.

    \begin{tcolorbox}[breakable, size=fbox, boxrule=1pt, pad at break*=1mm,colback=cellbackground, colframe=cellborder, enlarge top by=0.25em, enlarge bottom by=0.5em]
\begin{Verbatim}[commandchars=\\\{\}]
\PY{n}{W}\PY{o}{.}\PY{n}{Set\PYZus{}Initial\PYZus{}Data}\PY{p}{(}\PY{n}{Aq\PYZus{}0\PYZus{}1}\PY{o}{=}\PY{l+s+s1}{\PYZsq{}}\PY{l+s+s1}{0}\PY{l+s+s1}{\PYZsq{}}\PY{p}{,} \PY{n}{Aq\PYZus{}0\PYZus{}2}\PY{o}{=}\PY{l+s+s1}{\PYZsq{}}\PY{l+s+s1}{0}\PY{l+s+s1}{\PYZsq{}}\PY{p}{,} \PY{n}{Ap\PYZus{}0}\PY{o}{=}\PY{l+s+s1}{\PYZsq{}}\PY{l+s+s1}{0}\PY{l+s+s1}{\PYZsq{}}\PY{p}{,} \PY{n}{W\PYZus{}0}\PY{o}{=}\PY{l+s+s1}{\PYZsq{}}\PY{l+s+s1}{0}\PY{l+s+s1}{\PYZsq{}}\PY{p}{)}\PY{p}{;}
\end{Verbatim}
\end{tcolorbox}

    \hypertarget{wave:problem-at-the-discrete-level-in-space-and-time}{%
\subsubsection{Problem at the discrete level in space and
time}\label{wave:problem-at-the-discrete-level-in-space-and-time}}

    We start by selecting the computational mesh which is generated with
{\sc{Gmsh}}\footnote{https://gmsh.info/} and saved as a \verb|.xml| file. Here the parameter $rfn\_num$ corresponds to a mesh refinement
parameter.

    \begin{tcolorbox}[breakable, size=fbox, boxrule=1pt, pad at break*=1mm,colback=cellbackground, colframe=cellborder, enlarge top by=0.25em, enlarge bottom by=0.5em]
\begin{Verbatim}[commandchars=\\\{\}]
\PY{n}{W}\PY{o}{.}\PY{n}{Set\PYZus{}Gmsh\PYZus{}Mesh}\PY{p}{(}\PY{l+s+s1}{\PYZsq{}}\PY{l+s+s1}{rectangle.xml}\PY{l+s+s1}{\PYZsq{}}\PY{p}{,} \PY{n}{rfn\PYZus{}num}\PY{o}{=}\PY{l+m+mi}{2}\PY{p}{)}\PY{p}{;}
\end{Verbatim}
\end{tcolorbox}
To perform the discretization in space, we must first specify the
conforming finite element approximation spaces to be used (see \cite{HaiMatSer20}). Concerning
the energy variables associated with the strain, we select the
Raviart-Thomas finite element family known as \(RT_k\) 
consisting of vector functions with a continuous normal component across
the interfaces between the elements of a mesh. For the energy variables
associated with the linear momentum and the boundary variables, we
choose the classical \(P_k\) finite element approximation. The given
combination of parameters \verb|rt_order=0, p_order=1, b_order=1|
corresponds to the \(RT_0 P_1 P_1\) family.

    \begin{tcolorbox}[breakable, size=fbox, boxrule=1pt, pad at break*=1mm,colback=cellbackground, colframe=cellborder, enlarge top by=0.25em, enlarge bottom by=0.5em]
\begin{Verbatim}[commandchars=\\\{\}]
\PY{n}{W}\PY{o}{.}\PY{n}{Set\PYZus{}Finite\PYZus{}Element\PYZus{}Spaces}\PY{p}{(}\PY{n}{family\PYZus{}q}\PY{o}{=}\PY{l+s+s1}{\PYZsq{}}\PY{l+s+s1}{RT}\PY{l+s+s1}{\PYZsq{}}\PY{p}{,} \PY{n}{family\PYZus{}p}\PY{o}{=}\PY{l+s+s1}{\PYZsq{}}\PY{l+s+s1}{P}\PY{l+s+s1}{\PYZsq{}}\PY{p}{,} \PY{n}{family\PYZus{}b}\PY{o}{=}\PY{l+s+s1}{\PYZsq{}}\PY{l+s+s1}{P}\PY{l+s+s1}{\PYZsq{}}\PY{p}{,}\PY{n}{rq}\PY{o}{=}\PY{l+m+mi}{0}\PY{p}{,} \PY{n}{rp}\PY{o}{=}\PY{l+m+mi}{1}\PY{p}{,} \PY{n}{rb}\PY{o}{=}\PY{l+m+mi}{1}\PY{p}{)}\PY{p}{;}
\end{Verbatim}
\end{tcolorbox}
We then perform the semi-discretization in space of the weak formulation
with PFEM. At the end of this stage, the complete formulation of the
pHODE is obtained. The different matrices related to the pHODE system
are constructed in the Assembly method of the \verb|Wave_2D| class of {\sc{SCRIMP}}
and are directly accessible through the object of the \verb|Wave_2D| class. The finite element assembly relies on the
variational formulation of PFEM and exploits the level of abstraction
provided by the UFL used in {\sc{FEniCS}},
leading to a code that is close to the mathematical formulation. The divergence based 
formulation is selected leading to a pHODE system. In other words, the integration by parts will be performed on the second line of~\eqref{eq:pHsWave}.

\begin{tcolorbox}[breakable, size=fbox, boxrule=1pt, pad at break*=1mm,colback=cellbackground, colframe=cellborder, enlarge top by=0.25em, enlarge bottom by=0.5em]
\begin{Verbatim}[commandchars=\\\{\}]
\PY{n}{W}\PY{o}{.}\PY{n}{Assembly}\PY{p}{(}\PY{n}{formulation}\PY{o}{=}\PY{l+s+s1}{\PYZsq{}}\PY{l+s+s1}{Div}\PY{l+s+s1}{\PYZsq{}}\PY{p}{)}\PY{p}{;}
\end{Verbatim}
\end{tcolorbox}
To perform the time integration of the pHODE, we first need to
interpolate both the control function on the boundary and the initial
data on the appropriate finite element spaces.

    \begin{tcolorbox}[breakable, size=fbox, boxrule=1pt, pad at break*=1mm,colback=cellbackground, colframe=cellborder, enlarge top by=0.25em, enlarge bottom by=0.5em]
\begin{Verbatim}[commandchars=\\\{\}]
\PY{n}{W}\PY{o}{.}\PY{n}{Project\PYZus{}Boundary\PYZus{}Control}\PY{p}{(}\PY{p}{)}
\PY{n}{W}\PY{o}{.}\PY{n}{Project\PYZus{}Initial\PYZus{}Data}\PY{p}{(}\PY{p}{)}\PY{p}{;}
\end{Verbatim}
\end{tcolorbox}
Then we specify the parameters related to the time discretization.

\begin{tcolorbox}[breakable, size=fbox, boxrule=1pt, pad at break*=1mm,colback=cellbackground, colframe=cellborder, enlarge top by=0.25em, enlarge bottom by=0.5em]
\begin{Verbatim}[commandchars=\\\{\}]
\PY{n}{W}\PY{o}{.}\PY{n}{Set\PYZus{}Time\PYZus{}Setting}\PY{p}{(}\PY{n}{dt}\PY{p}{)}\PY{p}{;}
\end{Verbatim}
\end{tcolorbox}

    \hypertarget{wave:numerical-approximation-of-the-space-time-solution}{%
\subsubsection{Numerical approximation of the space-time
solution}\label{wave:numerical-approximation-of-the-space-time-solution}}

    We are now able to perform the time integration of the resulting pHODE system and
deduce the behaviour of both the energy variables and the Hamiltonian with respect to the time and space variables, respectively. Detailed
information from the {\sc{Assimulo}} library is included after time
integration.

    \begin{tcolorbox}[breakable, size=fbox, boxrule=1pt, pad at break*=1mm,colback=cellbackground, colframe=cellborder, enlarge top by=0.25em, enlarge bottom by=0.5em]
\begin{Verbatim}[commandchars=\\\{\}]
\PY{n}{A}\PY{p}{,} \PY{n}{Hamiltonian} \PY{o}{=} \PY{n}{W}\PY{o}{.}\PY{n}{Time\PYZus{}Integration}\PY{p}{(}\PY{n}{ode\PYZus{}library}\PY{p}{)}
\end{Verbatim}
\end{tcolorbox}

    \begin{Verbatim}[commandchars=\\\{\}]
ODE Integration using assimulo built-in functions:
Final Run Statistics: ---

 Number of steps                                 : 614
 Number of function evaluations                  : 800
 Number of Jacobian*vector evaluations           : 2977
 Number of function eval. due to Jacobian eval.  : 0
 Number of error test failures                   : 0
 Number of nonlinear iterations                  : 797
 Number of nonlinear convergence failures        : 51

Solver options:

 Solver                   : CVode
 Linear multistep method  : BDF
 Nonlinear solver         : Newton
 Linear solver type       : SPGMR
 Maximal order            : 3
 Tolerances (absolute)    : 1e-05
 Tolerances (relative)    : 1e-05

Simulation interval    : 0.0 - 5.0 seconds.
Elapsed simulation time: 0.9727537930002654 seconds.

    \end{Verbatim}

    \hypertarget{wave:post-processing}{%
\subsubsection{Post-processing}\label{wave:post-processing}}

    We represent the two-dimensional mesh with corresponding degrees of
freedom for each variable in Figure \ref{fig:wave:mesh}.

    \begin{tcolorbox}[breakable, size=fbox, boxrule=1pt, pad at break*=1mm,colback=cellbackground, colframe=cellborder, enlarge top by=0.25em, enlarge bottom by=0.5em]
\begin{Verbatim}[commandchars=\\\{\}]
\PY{n}{W}\PY{o}{.}\PY{n}{notebook} \PY{o}{=} \PY{k+kc}{True}
\PY{n}{W}\PY{o}{.}\PY{n}{Plot\PYZus{}Mesh\PYZus{}with\PYZus{}DOFs}\PY{p}{(}\PY{p}{)}    
\end{Verbatim}
\end{tcolorbox}

    \begin{figure}[!htbp]
    \centering
    \includegraphics[width=0.55\textwidth]{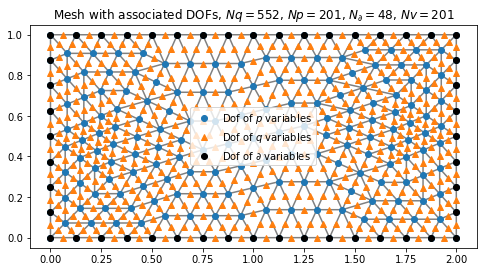}
    \caption{Two-dimensional triangular mesh with corresponding degrees of
freedom for each variable for the anisotropic wave problem with damping.}
    \label{fig:wave:mesh}
    \end{figure}
We plot the Hamiltonian function versus time in Figure \ref{fig:wave:ham}. Here $tspan$ represents the collection of discrete times due to the possibly
    adaptative time procedure used in the time integration library.

\begin{tcolorbox}[breakable, size=fbox, boxrule=1pt, pad at break*=1mm,colback=cellbackground, colframe=cellborder, enlarge top by=0.25em, enlarge bottom by=0.5em]
\begin{Verbatim}[commandchars=\\\{\}]
\PY{n}{W}\PY{o}{.}\PY{n}{Plot\PYZus{}Hamiltonian}\PY{p}{(}\PY{n}{W}\PY{o}{.}\PY{n}{tspan}\PY{p}{,}\PY{n}{Hamiltonian}\PY{p}{,} \PY{n}{marker}\PY{o}{=}\PY{l+s+s1}{\PYZsq{}}\PY{l+s+s1}{o}\PY{l+s+s1}{\PYZsq{}}\PY{p}{)}
\end{Verbatim}
\end{tcolorbox}

    \begin{figure}[!htbp]
    \centering
    \includegraphics[width=0.55\textwidth]{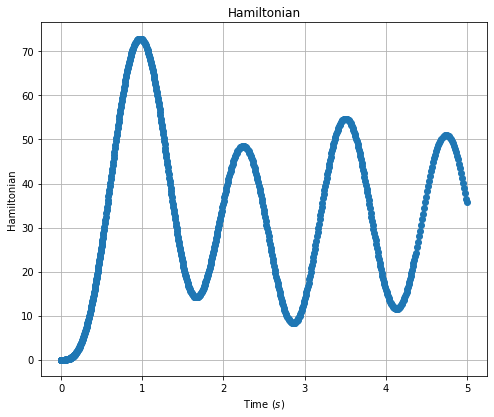}
    \caption{Hamiltonian function versus time for the anisotropic, heterogeneous and boundary controlled wave problem with damping.}
    \label{fig:wave:ham}
    \end{figure} 
The behaviour of the deflection is graphically represented at a
given time. Here we simply plot the deflection at the final time of the
simulation in Figure \ref{fig:wave:def}.

    \begin{tcolorbox}[breakable, size=fbox, boxrule=1pt, pad at break*=1mm,colback=cellbackground, colframe=cellborder, enlarge top by=0.25em, enlarge bottom by=0.5em]
\begin{Verbatim}[commandchars=\\\{\}]
\PY{n}{W}\PY{o}{.}\PY{n}{Plot\PYZus{}3D}\PY{p}{(}\PY{n}{W}\PY{o}{.}\PY{n}{Get\PYZus{}Deflection}\PY{p}{(}\PY{n}{A}\PY{p}{)}\PY{p}{,} \PY{n}{tf}\PY{p}{,} \PY{l+s+s1}{\PYZsq{}}\PY{l+s+s1}{Deflection at t=}\PY{l+s+s1}{\PYZsq{}}\PY{o}{+}\PY{n+nb}{str}\PY{p}{(}\PY{n}{tf}\PY{p}{)}\PY{p}{)}
\end{Verbatim}
\end{tcolorbox}

    \begin{figure}[!htbp]
    \centering
    \includegraphics[width=0.55\textwidth]{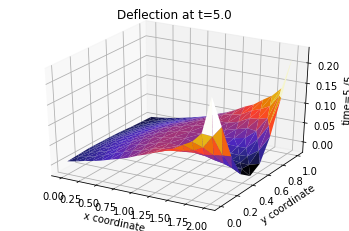}
    \caption{Deflection at the final time for the anisotropic, heterogeneous and boundary controlled wave problem with damping.}
    \label{fig:wave:def}
    \end{figure}
The related {\sc{Jupyter}} notebook \cite{bhsv:20z} further illustrates how to obtain a structure-preserving reduced model of this port-Hamiltonian system. After application of the model reduction algorithm proposed in \cite{chbg:16}, a pHODE of reduced size has to be integrated to obtain an approximate solution of the wave propagation problem. This is further illustrated on the simple application detailed in this section. In addition, a supplementary notebook illustrates the numerical simulation of the wave equation problem, when mixed boundary conditions (i.e. Dirichlet and Neumann conditions) on the boundary control function are imposed by Lagrange multipliers \cite{brugnoli2020wc}.

\hypertarget{min:the-mindlin-plate-problem}{%
\subsection{The Mindlin plate problem}\label{the-mindlin-plate-problem}}

    We first recall the considered continuous problem related to the Mindlin plate and tackle the semi-discretization in space of the pHs by PFEM. After transformation and time discretization, we perform a numerical simulation to obtain
an approximation of the space-time solution. As in Section \ref{wave:anisotropic-heterogeneous-wave-equation}, the procedure is described step-by-step and detailed explanations and numerical illustrations are provided. 

    \hypertarget{min:problem-statement}{%
\subsubsection{Problem statement}\label{problem-statement}}

Consider the Mindlin plate problem defined for all \(t\ge 0\) as:
\begin{equation}
\left\{\begin{aligned}
   \displaystyle \rho b \partial_{tt} w &= \mathrm{div}(\boldsymbol{q}), \quad \boldsymbol x \in \Omega = \{0, L_x\}\times \{0, L_y\},  \\
\displaystyle \frac{\rho b^3}{12} \partial_{tt}{\boldsymbol \theta} &= \boldsymbol{q} + \mathrm{Div}(\boldsymbol{M}) \\
\end{aligned}\right.
\end{equation}
with initial conditions:
\begin{equation}
\begin{aligned}
    w(0, \boldsymbol x) &= 0, \\
    {\partial_t} w(0,\boldsymbol x) &= w_1(x),  \\
\end{aligned} \qquad
\begin{aligned}
   \boldsymbol{\theta}(0, \boldsymbol x) & = \boldsymbol{0}, \\
  {\partial_t} \boldsymbol{\theta}(0,\boldsymbol x) &= \boldsymbol{0}, \\
\end{aligned} \qquad
\begin{aligned}
\boldsymbol x &\in \Omega, t=0,\\
\boldsymbol x &\in \Omega, t=0,\\
\end{aligned}
  \end{equation}
and boundary conditions:
\begin{equation}
\begin{aligned}
    w\vert_{\Gamma_D}&=u_D(t), \\
    \boldsymbol{q} \cdot \boldsymbol{n}\vert_{\Gamma_N}&=u_N(t), \\
\end{aligned} \qquad
\begin{aligned}
    \boldsymbol{\theta}\vert_{\Gamma_D}&=0,  \\ 
     \quad \boldsymbol{M} \cdot \boldsymbol{n}\vert_{\Gamma_N}&=0,  \\ 
\end{aligned} \qquad
\begin{aligned}
\Gamma_D&=\{x=0 \}, \\
\Gamma_N&=\{x=L_x, y=0, y=L_y\}.
\end{aligned}
\end{equation}
Mixed boundary conditions are considered in this example. The subsets
\(\Gamma_N, \; \Gamma_D\) represent the subsets of the boundary where
Neumann and Dirichlet conditions hold respectively. The Dirichlet
conditions are enforced using Lagrange multipliers. The PFEM discretization then leads
to a pHDAE, as explained in \cite{brugnoli2020wc} for the wave equation. The following expressions have been considered for the initial and boundary conditions:
\begin{equation}
    w_1 = xy, \qquad u_D = 0.01(\cos\left(2\pi \frac{t}{t_f}\right)-1), \qquad u_N = 10^5\sin\left(2\pi\frac{x}{L_x}\right)\left(1-\exp^{-10\frac{t}{t_f}}\right).
\end{equation}

    \hypertarget{min:setup}{%
\subsubsection{Setup}\label{setup}}

    We initialize here the {\sc{Python}} object related to the \verb|Mindlin| class of
{\sc{SCRIMP}}. This object will be used throughout this section.

    \begin{tcolorbox}[breakable, size=fbox, boxrule=1pt, pad at break*=1mm,colback=cellbackground, colframe=cellborder, enlarge top by=0.25em, enlarge bottom by=0.5em]
\begin{Verbatim}[commandchars=\\\{\}]
\PY{n}{Min} \PY{o}{=} \PY{n}{SCRIMP}\PY{o}{.}\PY{n}{Mindlin}\PY{p}{(}\PY{p}{)}
\end{Verbatim}
\end{tcolorbox}

    \hypertarget{min:constants}{%
\subsubsection{Constants}\label{constants}}

We define the constants related to the rectangular
domain \(\Omega\). The coordinates of the bottom left
\((x_0, y_0) =(0, 0)\) and the top right \((x_L, y_L) = (L_x, L_y)\)
corners of the rectangle are required.

    \begin{tcolorbox}[breakable, size=fbox, boxrule=1pt, pad at break*=1mm,colback=cellbackground, colframe=cellborder, enlarge top by=0.25em, enlarge bottom by=0.5em]
\begin{Verbatim}[commandchars=\\\{\}]
\PY{n}{x0}\PY{p}{,} \PY{n}{xL}\PY{p}{,} \PY{n}{y0}\PY{p}{,} \PY{n}{yL} \PY{o}{=} \PY{l+m+mf}{0.}\PY{p}{,} \PY{l+m+mf}{2.}\PY{p}{,} \PY{l+m+mf}{0.}\PY{p}{,} \PY{l+m+mf}{1.}
\end{Verbatim}
\end{tcolorbox}
As in the previous example, the time interval related to the time discretization is defined as follows:

    \begin{tcolorbox}[breakable, size=fbox, boxrule=1pt, pad at break*=1mm,colback=cellbackground, colframe=cellborder, enlarge top by=0.25em, enlarge bottom by=0.5em]
\begin{Verbatim}[commandchars=\\\{\}]
\PY{n}{ti}\PY{p}{,} \PY{n}{tf}  \PY{o}{=} \PY{l+m+mf}{0.}\PY{p}{,} \PY{l+m+mf}{0.01}
\end{Verbatim}
\end{tcolorbox}
A Runge-Kutta method for the time integration of the system is prescribed. This method is conditionally stable, so the time-step has to be set
accurately to avoid numerical instabilities.

\begin{tcolorbox}[breakable, size=fbox, boxrule=1pt, pad at break*=1mm,colback=cellbackground, colframe=cellborder, enlarge top by=0.25em, enlarge bottom by=0.5em]
\begin{Verbatim}[commandchars=\\\{\}]
\PY{n}{dt}           \PY{o}{=} \PY{l+m+mf}{1.e\PYZhy{}6}
\PY{n}{dae\PYZus{}library}  \PY{o}{=} \PY{l+s+s1}{\PYZsq{}}\PY{l+s+s1}{DAE:RK4\PYZus{}Augmented}\PY{l+s+s1}{\PYZsq{}}
\end{Verbatim}
\end{tcolorbox}

    \hypertarget{min:fenics-expressions-definition}{%
\subsubsection{{\sc{FEniCS}} expressions
definition}\label{fenics-expressions-definition}}

    The {\sc{FEniCS}} library is also used in the \verb|Mindlin| class of {\sc{SCRIMP}}.
The coefficients related to the physical parameters of the isotropic plate can
be provided as either real numbers or {\sc{FEniCS}} expressions.

    \begin{tcolorbox}[breakable, size=fbox, boxrule=1pt, pad at break*=1mm,colback=cellbackground, colframe=cellborder, enlarge top by=0.25em, enlarge bottom by=0.5em]
\begin{Verbatim}[commandchars=\\\{\}]
\PY{n}{E}   \PY{o}{=} \PY{l+m+mi}{7}\PY{o}{*}\PY{l+m+mi}{10}\PY{o}{*}\PY{o}{*}\PY{l+m+mi}{10}
\PY{n}{rho} \PY{o}{=} \PY{l+m+mi}{2700}
\PY{n}{nu}  \PY{o}{=} \PY{l+m+mf}{0.3}
\PY{n}{h}   \PY{o}{=} \PY{l+m+mf}{0.1}
\PY{n}{k}   \PY{o}{=} \PY{l+m+mi}{5}\PY{o}{/}\PY{l+m+mi}{6}
\end{Verbatim}
\end{tcolorbox}
Similarly the initial vertical condition
\(w_1\) can be defined as a string. It represents a {\sc{C++}} code that will be compiled by the {\sc{Dolfin}} library of {\sc{FEniCS}}.

    \begin{tcolorbox}[breakable, size=fbox, boxrule=1pt, pad at break*=1mm,colback=cellbackground, colframe=cellborder, enlarge top by=0.25em, enlarge bottom by=0.5em]
\begin{Verbatim}[commandchars=\\\{\}]
\PY{n}{ew\PYZus{}0} \PY{o}{=} \PY{l+s+s1}{\PYZsq{}}\PY{l+s+s1}{x[0]*x[1]}\PY{l+s+s1}{\PYZsq{}}
\end{Verbatim}
\end{tcolorbox}
This means that the initial velocity satisfies \(w_1= xy\). Note that
the initial condition has to be compatible with the boundary conditions. The other
initial conditions will be set to zero.

    \hypertarget{min:problem-at-the-continuous-level}{%
\subsubsection{Problem at the continuous
level}\label{problem-at-the-continuous-level}}

    We are now able to completely define the problem at the continuous
level. We start by specifying that the computational domain \(\Omega\)
is of rectangular shape. To define \(\Omega\), we provide the
coordinates of the bottom left and top right corners to the \verb|Mindlin|
object.

    \begin{tcolorbox}[breakable, size=fbox, boxrule=1pt, pad at break*=1mm,colback=cellbackground, colframe=cellborder, enlarge top by=0.25em, enlarge bottom by=0.5em]
\begin{Verbatim}[commandchars=\\\{\}]
\PY{n}{Min}\PY{o}{.}\PY{n}{Set\PYZus{}Rectangular\PYZus{}Domain}\PY{p}{(}\PY{n}{x0}\PY{p}{,} \PY{n}{xL}\PY{p}{,} \PY{n}{y0}\PY{p}{,} \PY{n}{yL}\PY{p}{)}\PY{p}{;}
\end{Verbatim}
\end{tcolorbox}
The time integration interval is then given.

    \begin{tcolorbox}[breakable, size=fbox, boxrule=1pt, pad at break*=1mm,colback=cellbackground, colframe=cellborder, enlarge top by=0.25em, enlarge bottom by=0.5em]
\begin{Verbatim}[commandchars=\\\{\}]
\PY{n}{Min}\PY{o}{.}\PY{n}{Set\PYZus{}Initial\PYZus{}Final\PYZus{}Time}\PY{p}{(}\PY{n}{ti}\PY{p}{,} \PY{n}{tf}\PY{p}{)}\PY{p}{;}
\end{Verbatim}
\end{tcolorbox}
The physical parameters related to the Mindlin plate are set.

    \begin{tcolorbox}[breakable, size=fbox, boxrule=1pt, pad at break*=1mm,colback=cellbackground, colframe=cellborder, enlarge top by=0.25em, enlarge bottom by=0.5em]
\begin{Verbatim}[commandchars=\\\{\}]
\PY{n}{Min}\PY{o}{.}\PY{n}{Set\PYZus{}Physical\PYZus{}Parameters}\PY{p}{(}\PY{n}{rho}\PY{p}{,} \PY{n}{h}\PY{p}{,} \PY{n}{E}\PY{p}{,} \PY{n}{nu}\PY{p}{,} \PY{n}{k}\PY{p}{,} \PY{n}{init\PYZus{}by\PYZus{}value}\PY{o}{=}\PY{k+kc}{True}\PY{p}{)}\PY{p}{;}
\end{Verbatim}
\end{tcolorbox}
Finally the initial conditions in terms of co-energy variables are also set.

    \begin{tcolorbox}[breakable, size=fbox, boxrule=1pt, pad at break*=1mm,colback=cellbackground, colframe=cellborder, enlarge top by=0.25em, enlarge bottom by=0.5em]
\begin{Verbatim}[commandchars=\\\{\}]
\PY{n}{Min}\PY{o}{.}\PY{n}{Set\PYZus{}Initial\PYZus{}Data}\PY{p}{(}\PY{n}{W\PYZus{}0}\PY{o}{=}\PY{l+s+s1}{\PYZsq{}}\PY{l+s+s1}{0}\PY{l+s+s1}{\PYZsq{}}\PY{p}{,} \PY{n}{Th1\PYZus{}0}\PY{o}{=}\PY{l+s+s1}{\PYZsq{}}\PY{l+s+s1}{0}\PY{l+s+s1}{\PYZsq{}}\PY{p}{,} \PY{n}{Th2\PYZus{}0}\PY{o}{=}\PY{l+s+s1}{\PYZsq{}}\PY{l+s+s1}{0}\PY{l+s+s1}{\PYZsq{}}\PY{p}{,}\PYZbs{}
                     \PY{n}{ew\PYZus{}0}\PY{o}{=}\PY{n}{ew\PYZus{}0}\PY{p}{,} \PY{n}{eth1\PYZus{}0}\PY{o}{=}\PY{l+s+s1}{\PYZsq{}}\PY{l+s+s1}{0}\PY{l+s+s1}{\PYZsq{}}\PY{p}{,} \PY{n}{eth2\PYZus{}0}\PY{o}{=}\PY{l+s+s1}{\PYZsq{}}\PY{l+s+s1}{0}\PY{l+s+s1}{\PYZsq{}}\PY{p}{,}\PYZbs{}
                     \PY{n}{Ekap11\PYZus{}0}\PY{o}{=}\PY{l+s+s1}{\PYZsq{}}\PY{l+s+s1}{0}\PY{l+s+s1}{\PYZsq{}}\PY{p}{,} \PY{n}{Ekap12\PYZus{}0}\PY{o}{=}\PY{l+s+s1}{\PYZsq{}}\PY{l+s+s1}{0}\PY{l+s+s1}{\PYZsq{}}\PY{p}{,} \PY{n}{Ekap22\PYZus{}0}\PY{o}{=}\PY{l+s+s1}{\PYZsq{}}\PY{l+s+s1}{0}\PY{l+s+s1}{\PYZsq{}}\PY{p}{,}\PYZbs{}
                     \PY{n}{egam1\PYZus{}0}\PY{o}{=}\PY{l+s+s1}{\PYZsq{}}\PY{l+s+s1}{0}\PY{l+s+s1}{\PYZsq{}}\PY{p}{,} \PY{n}{egam2\PYZus{}0}\PY{o}{=}\PY{l+s+s1}{\PYZsq{}}\PY{l+s+s1}{0}\PY{l+s+s1}{\PYZsq{}}\PY{p}{)}\PY{p}{;}
\end{Verbatim}
\end{tcolorbox}

    \hypertarget{min:problem-at-the-discrete-level-in-space-and-time}{%
\subsubsection{Problem at the discrete level in space and
time}\label{problem-at-the-discrete-level-in-space-and-time}}

    We start by selecting the computational mesh which is generated with
{\sc{FEniCS}} inner mesh utilities. The first parameter corresponds to a mesh refinement parameter.

    \begin{tcolorbox}[breakable, size=fbox, boxrule=1pt, pad at break*=1mm,colback=cellbackground, colframe=cellborder, enlarge top by=0.25em, enlarge bottom by=0.5em]
\begin{Verbatim}[commandchars=\\\{\}]
\PY{n}{Min}\PY{o}{.}\PY{n}{Generate\PYZus{}Mesh}\PY{p}{(}\PY{l+m+mi}{10}\PY{p}{,} \PY{n}{structured\PYZus{}mesh}\PY{o}{=}\PY{k+kc}{True}\PY{p}{)}\PY{p}{;}
\end{Verbatim}
\end{tcolorbox}
To perform the discretization in space, the conforming finite element approximation spaces to be used has to be specified. The finite
element for the linear and angular velocity are Lagrange polynomials of
order $r$. The momenta tensor and shear stress are chosen as Discontinuous
 Galerkin elements of order $r-1$ \cite{brug:20}. This choice of finite elements is similar to the one proposed in \cite{cohen2007mindlin}, but a simplicial mesh is used instead of a quadrilateral one. By default, the boundary variables are approximated as Lagrange polynomials of order $1$. This can be easily changed
through the option \verb|family_b|~for the family, and \verb|rb| for the degree.

    \begin{tcolorbox}[breakable, size=fbox, boxrule=1pt, pad at break*=1mm,colback=cellbackground, colframe=cellborder, enlarge top by=0.25em, enlarge bottom by=0.5em]
\begin{Verbatim}[commandchars=\\\{\}]
\PY{n}{Min}\PY{o}{.}\PY{n}{Set\PYZus{}Finite\PYZus{}Elements\PYZus{}Spaces}\PY{p}{(}\PY{n}{r}\PY{o}{=}\PY{l+m+mi}{1}\PY{p}{)}\PY{p}{;}
\end{Verbatim}
\end{tcolorbox}
We then perform the semi-discretization in space of the weak formulation
with PFEM. At the end of this stage, the complete formulation
of the pHDAE is obtained. The different matrices related to the pHDAE
system are constructed in the \verb|Assembly_Mixed_BC| method of the Mindlin
class of {\sc{SCRIMP}} and are directly accessible through the object of the
Mindlin class. The subsets named \verb|G1|, \verb|G2|, \verb|G3|, \verb|G4|, denote the left, bottom, right and top sides of the rectangle,
respectively.

\noindent In {\sc{SCRIMP}} the boundary control $\boldsymbol{u}_\partial$ is assumed to take the form: 
\begin{verbatim}
    Ub_tm0(t) * Ub_sp0(x) + Ub_tm1(t) + Ub_sp1(x)
\end{verbatim}
Its derivative $\dot{\boldsymbol{u}}_\partial$ is expressed as:
\begin{verbatim}
    Ub_tm0_dir(t) * Ub_sp0(x) + Ub_tm1_dir(t) 
\end{verbatim}

To integrate in time we need to provide the derivative of
the boundary condition. This information is provided by the variables
\(\verb|Ub_tm0_dir|, \; \verb|Ub_tm1_dir|\), respectively. This is needed to reduce the resulting DAE of index $2$ to index $1$.







    \begin{tcolorbox}[breakable, size=fbox, boxrule=1pt, pad at break*=1mm,colback=cellbackground, colframe=cellborder, enlarge top by=0.25em, enlarge bottom by=0.5em]
\begin{Verbatim}[commandchars=\\\{\}]
\PY{n}{Min}\PY{o}{.}\PY{n}{Set\PYZus{}Mixed\PYZus{}Boundaries}\PY{p}{(}\PY{n}{Dir}\PY{o}{=}\PY{p}{[}\PY{l+s+s1}{\PYZsq{}}\PY{l+s+s1}{G1}\PY{l+s+s1}{\PYZsq{}}\PY{p}{]}\PY{p}{,} \PY{n}{Nor}\PY{o}{=}\PY{p}{[}\PY{l+s+s1}{\PYZsq{}}\PY{l+s+s1}{G2}\PY{l+s+s1}{\PYZsq{}}\PY{p}{,} \PY{l+s+s1}{\PYZsq{}}\PY{l+s+s1}{G3}\PY{l+s+s1}{\PYZsq{}}\PY{p}{,} \PY{l+s+s1}{\PYZsq{}}\PY{l+s+s1}{G4}\PY{l+s+s1}{\PYZsq{}}\PY{p}{]}\PY{p}{)}
\PY{n}{Min}\PY{o}{.}\PY{n}{Assembly\PYZus{}Mixed\PYZus{}BC}\PY{p}{(}\PY{p}{)} 
\PY{n}{Min}\PY{o}{.}\PY{n}{Set\PYZus{}Mixed\PYZus{}BC\PYZus{}Normal}\PY{p}{(}\PY{n}{Ub\PYZus{}tm0}\PY{o}{=}\PY{k}{lambda} \PY{n}{t}\PY{p}{:} \PY{n}{np}\PY{o}{.}\PY{n}{array}\PY{p}{(}\PY{p}{[}\PY{p}{(}\PY{l+m+mi}{1} \PY{o}{\PYZhy{}} \PY{n}{np}\PY{o}{.}\PY{n}{exp}\PY{p}{(}\PY{o}{\PYZhy{}}\PY{n}{t}\PY{o}{/}\PY{n}{tf}\PY{p}{)} \PY{p}{)}\PY{p}{,}\PY{l+m+mi}{0}\PY{p}{,}\PY{l+m+mi}{0}\PY{p}{]}\PY{p}{)} \PY{p}{,}\PYZbs{}
                        \PY{n}{Ub\PYZus{}sp0}\PY{o}{=}\PY{p}{(}\PY{l+s+s1}{\PYZsq{}}\PY{l+s+s1}{100000*sin(2*pi/xL*x[0])}\PY{l+s+s1}{\PYZsq{}}\PY{p}{,} \PY{l+s+s1}{\PYZsq{}}\PY{l+s+s1}{0.}\PY{l+s+s1}{\PYZsq{}}\PY{p}{,} \PY{l+s+s1}{\PYZsq{}}\PY{l+s+s1}{0.}\PY{l+s+s1}{\PYZsq{}}\PY{p}{)}\PY{p}{)}
\PY{n}{amp}   \PY{o}{=} \PY{l+m+mf}{0.01}
\PY{n}{omega} \PY{o}{=} \PY{l+m+mi}{2}\PY{o}{*}\PY{n}{pi}\PY{o}{/}\PY{n}{tf}
\PY{n}{Min}\PY{o}{.}\PY{n}{Set\PYZus{}Mixed\PYZus{}BC\PYZus{}Dirichlet}\PY{p}{(}\PY{n}{Ub\PYZus{}tm0}\PY{o}{=}\PY{k}{lambda} \PY{n}{t} \PY{p}{:} \PY{n}{np}\PY{o}{.}\PY{n}{array}\PY{p}{(}\PY{p}{[}\PY{o}{\PYZhy{}}\PY{n}{amp}\PY{o}{*}\PY{n}{omega}\PY{o}{*}\PY{n}{np}\PY{o}{.}\PY{n}{sin}\PY{p}{(}\PY{n}{omega}\PY{o}{*}\PY{n}{t}\PY{p}{)}\PY{p}{,}\PY{l+m+mi}{0}\PY{p}{,}\PY{l+m+mi}{0}\PY{p}{]}\PY{p}{)}\PY{p}{,} \PY{n}{Ub\PYZus{}sp0}\PY{o}{=}\PY{p}{(}\PY{l+s+s1}{\PYZsq{}}\PY{l+s+s1}{1.}\PY{l+s+s1}{\PYZsq{}}\PY{p}{,} \PY{l+s+s1}{\PYZsq{}}\PY{l+s+s1}{0.}\PY{l+s+s1}{\PYZsq{}}\PY{p}{,}\PY{l+s+s1}{\PYZsq{}}\PY{l+s+s1}{0.}\PY{l+s+s1}{\PYZsq{}}\PY{p}{)}\PY{p}{,}\PYZbs{}
                           \PY{n}{Ub\PYZus{}tm0\PYZus{}dir}\PY{o}{=}\PY{k}{lambda} \PY{n}{t} \PY{p}{:} \PY{n}{np}\PY{o}{.}\PY{n}{array}\PY{p}{(}\PY{p}{[}\PY{o}{\PYZhy{}}\PY{n}{amp}\PY{o}{*}\PY{n}{omega}\PY{o}{*}\PY{o}{*}\PY{l+m+mi}{2}\PY{o}{*}\PY{n}{np}\PY{o}{.}\PY{n}{cos}\PY{p}{(}\PY{n}{omega}\PY{o}{*}\PY{n}{t}\PY{p}{)}\PY{p}{,}\PY{l+m+mi}{0}\PY{p}{,}\PY{l+m+mi}{0}\PY{p}{]}\PY{p}{)}\PY{p}{)}\PY{p}{;}
\end{Verbatim}
\end{tcolorbox}
To perform the time integration of the pHDAE, we first need to
interpolate the boundary control function and the initial data on the appropriate finite element spaces.

    \begin{tcolorbox}[breakable, size=fbox, boxrule=1pt, pad at break*=1mm,colback=cellbackground, colframe=cellborder, enlarge top by=0.25em, enlarge bottom by=0.5em]
\begin{Verbatim}[commandchars=\\\{\}]
\PY{n}{Min}\PY{o}{.}\PY{n}{Project\PYZus{}Boundary\PYZus{}Control}\PY{p}{(}\PY{p}{)}
\PY{n}{Min}\PY{o}{.}\PY{n}{Project\PYZus{}Initial\PYZus{}Data}\PY{p}{(}\PY{p}{)}\PY{p}{;}
\end{Verbatim}
\end{tcolorbox}
Finally the specification of the parameters related to the time discretization is made.

    \begin{tcolorbox}[breakable, size=fbox, boxrule=1pt, pad at break*=1mm,colback=cellbackground, colframe=cellborder, enlarge top by=0.25em, enlarge bottom by=0.5em]
\begin{Verbatim}[commandchars=\\\{\}]
\PY{n}{Min}\PY{o}{.}\PY{n}{Set\PYZus{}Time\PYZus{}Setting}\PY{p}{(}\PY{n}{dt}\PY{p}{)}\PY{p}{;}
\end{Verbatim}
\end{tcolorbox}

    \hypertarget{min:numerical-approximation-of-the-space-time-solution}{%
\subsubsection{Numerical approximation of the space-time
solution}\label{numerical-approximation-of-the-space-time-solution}}

    For the numerical approximation of the solution of the pHDAE system, the
algebraic condition is differentiated. The integrator \verb|'DAE:RK4_Augmented'| takes as input a pHDAE. Then, it exploits a projection method to express the Lagrange multiplier in terms of the unknown  \cite{benner2015time}, thus reducing the original DAE system into a purely ODE one. This allows employing standard ODE solvers for the time integration, as discussed in Section \ref{constants}.\\

    \begin{tcolorbox}[breakable, size=fbox, boxrule=1pt, pad at break*=1mm,colback=cellbackground, colframe=cellborder, enlarge top by=0.25em, enlarge bottom by=0.5em]
\begin{Verbatim}[commandchars=\\\{\}]
\PY{n}{A}\PY{p}{,} \PY{n}{Hamiltonian} \PY{o}{=} \PY{n}{Min}\PY{o}{.}\PY{n}{Time\PYZus{}Integration}\PY{p}{(}\PY{n}{dae\PYZus{}library}\PY{p}{)}
\end{Verbatim}
\end{tcolorbox}

    \hypertarget{min:post-processing}{%
\subsubsection{Post-processing}\label{post-processing}}


    


    
\begin{figure}[!htbp]
    \centering
    \includegraphics[width=0.55\textwidth]{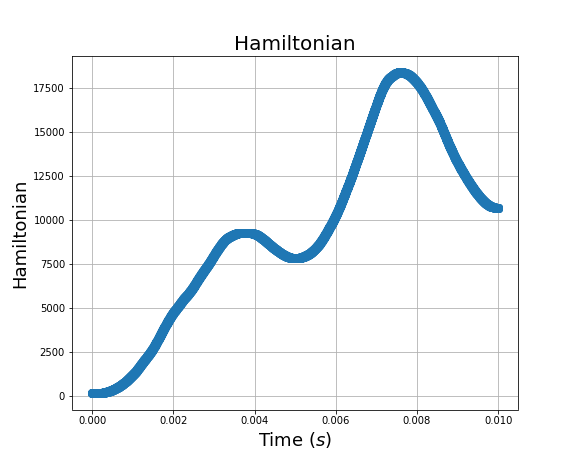}
    \caption{Hamiltonian versus time for the Mindlin plate problem.}
    \label{Hamiltonian:AB}
\end{figure} 

\begin{figure}[!htbp]
    \centering
    \begin{subfigure}[b]{0.46\columnwidth}%
    \centering
        \includegraphics[width=\textwidth]{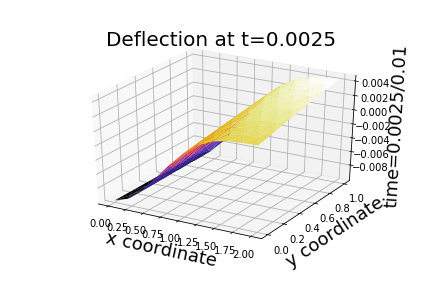}
        \subcaption{Deflection $w$ at $t=t_f/4$}%
    \end{subfigure}%
    \hspace{8pt}%
    \begin{subfigure}[b]{0.46\columnwidth}%
    \centering
        \includegraphics[width=\textwidth]{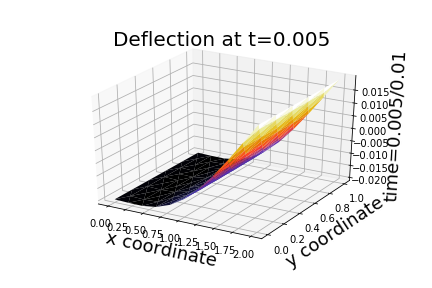}
        \subcaption{Deflection $w$ at $t=t_f/2$}%
    \end{subfigure} \\
    \begin{subfigure}[b]{0.46\columnwidth}%
    \centering
        \includegraphics[width=\textwidth]{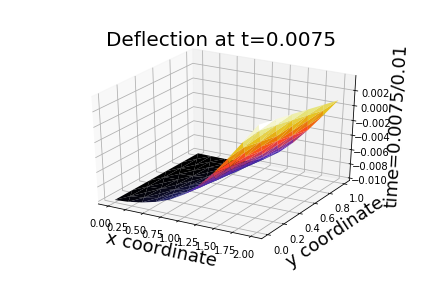}%
        \subcaption{Deflection $w$ at $t=3 t_f/4$}%
    \end{subfigure}%
    \hspace{8pt}%
    \begin{subfigure}[b]{0.46\columnwidth}%
    \centering
        \includegraphics[width=\textwidth]{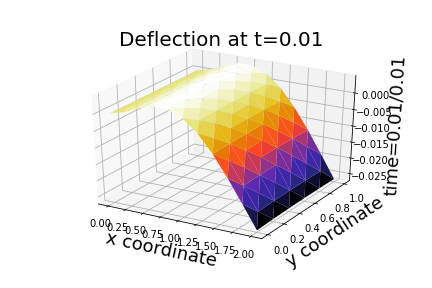}%
        \subcaption{Deflection $w$ at $t=t_f$}%
    \end{subfigure}%
    \caption{Snapshots of the vertical deflection of the Mindlin plate at different instants.}
    \label{Deflection:AB}
\end{figure}

Post-processing is performed similarly as in Section \ref{wave:post-processing}. Hence we omit the related {\sc{Python}} lines of code for sake of brevity. In Figure \ref{Hamiltonian:AB} the evolution of the Hamiltonian function is shown versus time. We note that the Dirichlet condition causes an increase in energy. In Figure 5 snapshots of the vertical deflection at different instants are shown. We remark that the Neumann boundary condition causes the plate to bend asymmetrically.

%
%

    \hypertarget{heath:anisotropic-heterogeneous-heat-equation}{%
\subsection{Anisotropic heterogeneous heat
equation}\label{heat:anisotropic-heterogeneous-heat-equation}}

    This third tutorial aims at illustrating PFEM to discretize the pHs presented in Section~\ref{sec:heat}, modelling the heat equation. We specifically learn how to define and solve this problem with {\sc{SCRIMP}}.
    We first define the continuous problem by using a specific
class of {\sc{SCRIMP}} related to the heat equation in two dimensions. Then we
tackle the discretization in space of the pHs
through PFEM. The discretization of the energy
formulation leads to a \emph{nonlinear} pHDAE formulation. After time discretization, we
perform a numerical simulation to obtain an approximation of the
space-time solution. Finally a simple post-processing is provided.

    \hypertarget{heath:problem-statement}{%
\subsubsection{Problem statement}\label{heat:problem-statement}}

We consider the two-dimensional heterogeneous anisotropic heat equation
defined for all \(t\ge 0\) as

\begin{eqnarray*}
    \rho(\boldsymbol x)\, C_V(\boldsymbol x) \displaystyle \frac{\partial}{\partial t} T(t,\boldsymbol x) &=& \text{div}\Big(\bm{\lambda}(\boldsymbol x)\cdot\textbf{grad} \ T(t,\boldsymbol x)\Big), \quad \boldsymbol x \in \Omega, \\
    T(t,\boldsymbol x) & = & v_{\partial}(t,\boldsymbol x) ,      \quad \boldsymbol x \in \partial \Omega, \\ 
    T(0, \boldsymbol x)             & = & T_0(x), \quad \boldsymbol x \in \Omega, t=0 \\
\end{eqnarray*}
with \(\Omega \subset \mathbb{R}^2\) an open bounded spatial domain with
Lipschitz-continuous boundary \(\partial \Omega\).
\(v_{\partial}(t,\boldsymbol x)\) represents the boundary control
function on the temperature (the notation $u$ is kept for the internal energy density). \(T(t,\boldsymbol x)\) denotes the temperature at point $\boldsymbol x
\in \Omega $ and time \(t\). \(\rho \in L^{\infty}(\Omega)\) (positive
and bounded from below) denotes the mass density,
\(\bm{\lambda} \in L^{\infty}(\Omega)^{2\times 2}\)
(symmetric and coercive) the thermal conductivity. In the following
\(\Omega\) is assumed to be of rectangular shape.

    \hypertarget{port-hamiltonian-formulation}{%
\subsubsection{Port-Hamiltonian formulation}\label{port-hamiltonian-formulation}}

We refer to \cite{SerMatHai19b,SerMatHai19c} for the
modeling and discretization of various port-Hamiltonian formulations of
this problem. The authors consider quadratic Lyapunov functional, entropy or internal
energy as Hamiltonian, respectively. We will consider the PFEM discretization of the internal energy functional
formulation as proposed in Section \ref{sec:heat}, which will lead to a nonlinear pHDAE. Our goal in this tutorial
is to show how a pHDAE system can be formulated and solved with
{\sc{SCRIMP}}. 

    \hypertarget{heath:setup}{%
\subsubsection{Setup}\label{heat:setup}}

    We initialize here the Python object related to the energy formulation
of the \verb|Heat_2D| class of {\sc{SCRIMP}}, that is assumed to be imported. This object will be used throughout
this tutorial. 
    \begin{tcolorbox}[breakable, size=fbox, boxrule=1pt, pad at break*=1mm,colback=cellbackground, colframe=cellborder, enlarge top by=0.25em, enlarge bottom by=0.5em]
\begin{Verbatim}[commandchars=\\\{\}]
\PY{n}{H} \PY{o}{=} \PY{n}{Energy}\PY{p}{(}\PY{p}{)}
\end{Verbatim}
\end{tcolorbox}
\verb|Energy| corresponds to a class inherited from the \verb|Heat_2D| base class. This base class contains implementations of the Lyapunov and entropy formulations as well. 
    \hypertarget{heath:constants}{%
\subsubsection{Constants}\label{heat:constants}}

The same lines of code as for the \verb|Wave_2D| and \verb|Mindlin| classes are used to define the constants related to the rectangular mesh.
    \begin{tcolorbox}[breakable, size=fbox, boxrule=1pt, pad at break*=1mm,colback=cellbackground, colframe=cellborder, enlarge top by=0.25em, enlarge bottom by=0.5em]
\begin{Verbatim}[commandchars=\\\{\}]
\PY{n}{x0}\PY{p}{,} \PY{n}{xL}\PY{p}{,} \PY{n}{y0}\PY{p}{,} \PY{n}{yL} \PY{o}{=} \PY{l+m+mf}{0.}\PY{p}{,} \PY{l+m+mf}{2.}\PY{p}{,} \PY{l+m+mf}{0.}\PY{p}{,} \PY{l+m+mf}{1.}
\end{Verbatim}
\end{tcolorbox}
The time interval related to the time discretization is specified similarly.

    \begin{tcolorbox}[breakable, size=fbox, boxrule=1pt, pad at break*=1mm,colback=cellbackground, colframe=cellborder, enlarge top by=0.25em, enlarge bottom by=0.5em]
\begin{Verbatim}[commandchars=\\\{\}]
\PY{n}{ti}\PY{p}{,} \PY{n}{tf}  \PY{o}{=} \PY{l+m+mf}{0.}\PY{p}{,} \PY{l+m+mf}{5.}
\end{Verbatim}
\end{tcolorbox}
We provide the time step for the time discretization of the pHDAE as well.

    \begin{tcolorbox}[breakable, size=fbox, boxrule=1pt, pad at break*=1mm,colback=cellbackground, colframe=cellborder, enlarge top by=0.25em, enlarge bottom by=0.5em]
\begin{Verbatim}[commandchars=\\\{\}]
\PY{n}{dt} \PY{o}{=} \PY{l+m+mf}{1.e\PYZhy{}3}
\end{Verbatim}
\end{tcolorbox}

    \hypertarget{heath:fenics-expressions-definition}{%
\subsubsection{FEniCS expressions
definition}\label{heat:fenics-expressions-definition}}

Using {\sc{FEniCS}} expressions, the physical parameters related to our model problem are defined.
    \begin{tcolorbox}[breakable, size=fbox, boxrule=1pt, pad at break*=1mm,colback=cellbackground, colframe=cellborder, enlarge top by=0.25em, enlarge bottom by=0.5em]
\begin{Verbatim}[commandchars=\\\{\}]
\PY{n}{rho}    \PY{o}{=} \PY{l+s+s1}{\PYZsq{}}\PY{l+s+s1}{x[0]*(xL\PYZhy{}x[0]) + x[1]*(yL\PYZhy{}x[1]) + 2}\PY{l+s+s1}{\PYZsq{}}
\PY{n}{Lambda11} \PY{o}{=} \PY{l+s+s1}{\PYZsq{}}\PY{l+s+s1}{5. + x[0]*x[1]}\PY{l+s+s1}{\PYZsq{}}
\PY{n}{Lambda12} \PY{o}{=} \PY{l+s+s1}{\PYZsq{}}\PY{l+s+s1}{(x[0]\PYZhy{}x[1])*(x[0]\PYZhy{}x[1])}\PY{l+s+s1}{\PYZsq{}}
\PY{n}{Lambda22} \PY{o}{=} \PY{l+s+s1}{\PYZsq{}}\PY{l+s+s1}{3.+x[1]/(x[0]+1)}\PY{l+s+s1}{\PYZsq{}}
\PY{n}{CV}      \PY{o}{=} \PY{l+s+s1}{\PYZsq{}}\PY{l+s+s1}{3.}\PY{l+s+s1}{\PYZsq{}}
\end{Verbatim}
\end{tcolorbox}
The initial conditions of the problem related to the
temperature and to the flow and effort variables are then given. The temperature
follows a Gaussian behaviour for which we specify related parameters.

    \begin{tcolorbox}[breakable, size=fbox, boxrule=1pt, pad at break*=1mm,colback=cellbackground, colframe=cellborder, enlarge top by=0.25em, enlarge bottom by=0.5em]
\begin{Verbatim}[commandchars=\\\{\}]
\PY{n}{ampl}\PY{p}{,} \PY{n}{sX}\PY{p}{,} \PY{n}{sY}\PY{p}{,} \PY{n}{X0}\PY{p}{,} \PY{n}{Y0}    \PY{o}{=} \PY{l+m+mi}{1000}\PY{p}{,} \PY{n}{xL}\PY{o}{/}\PY{l+m+mi}{6}\PY{p}{,} \PY{n}{yL}\PY{o}{/}\PY{l+m+mi}{6}\PY{p}{,} \PY{n}{xL}\PY{o}{/}\PY{l+m+mi}{2}\PY{p}{,} \PY{n}{yL}\PY{o}{/}\PY{l+m+mi}{2} 
\PY{n}{au\PYZus{}0} \PY{o}{=} \PY{l+s+s1}{\PYZsq{}}\PY{l+s+s1}{3000}\PY{l+s+s1}{\PYZsq{}}
\PY{n}{eu\PYZus{}0} \PY{o}{=} \PY{l+s+s1}{\PYZsq{}}\PY{l+s+s1}{ ampl * exp(\PYZhy{} pow( (x[0]\PYZhy{}X0)/sX, 2) \PYZhy{} pow( (x[1]\PYZhy{}Y0)/sY, 2) ) + 1000}\PY{l+s+s1}{\PYZsq{}}
\end{Verbatim}
\end{tcolorbox}
The spatial part of the boundary control function is defined next.

    \begin{tcolorbox}[breakable, size=fbox, boxrule=1pt, pad at break*=1mm,colback=cellbackground, colframe=cellborder, enlarge top by=0.25em, enlarge bottom by=0.5em]
\begin{Verbatim}[commandchars=\\\{\}]
\PY{n}{Ub\PYZus{}sp0}  \PY{o}{=} \PY{l+s+s1}{\PYZsq{}\PYZsq{}\PYZsq{}}\PY{l+s+s1}{ }
\PY{l+s+s1}{           ( abs(x[0]) \PYZlt{}= DOLFIN\PYZus{}EPS ?  1. * (yL\PYZhy{}x[1])*x[1] : 0 )}
\PY{l+s+s1}{        + ( abs(x[1])  \PYZlt{}= DOLFIN\PYZus{}EPS ?  1. * (xL\PYZhy{}x[0])*x[0] : 0 )            }
\PY{l+s+s1}{        + ( abs(xL \PYZhy{} x[0]) \PYZlt{}= DOLFIN\PYZus{}EPS ?  \PYZhy{}15. * (yL\PYZhy{}x[1])*x[1] : 0 )}
\PY{l+s+s1}{        + ( abs(yL \PYZhy{} x[1]) \PYZlt{}= DOLFIN\PYZus{}EPS  ? 1. * (xL\PYZhy{}x[0])*x[0] : 0 )      }
\PY{l+s+s1}{        }\PY{l+s+s1}{\PYZsq{}\PYZsq{}\PYZsq{}}
\end{Verbatim}
\end{tcolorbox}
Finally we define the time-dependent part of the boundary control
as a pure Python function. The whole boundary control function
is then given as the product of the two quantities  (\verb|Ub_sp0| and
\verb|Ub_tm0|, respectively).

    \begin{tcolorbox}[breakable, size=fbox, boxrule=1pt, pad at break*=1mm,colback=cellbackground, colframe=cellborder, enlarge top by=0.25em, enlarge bottom by=0.5em]
\begin{Verbatim}[commandchars=\\\{\}]
\PY{k}{def} \PY{n+nf}{Ub\PYZus{}tm0}\PY{p}{(}\PY{n}{t}\PY{p}{)}\PY{p}{:}
    \PY{k}{if} \PY{n}{t}\PY{o}{\PYZlt{}}\PY{o}{=}\PY{l+m+mi}{2}\PY{p}{:}
        \PY{k}{return} \PY{l+m+mi}{500} \PY{o}{*} \PY{n}{sin}\PY{p}{(}\PY{l+m+mi}{2} \PY{o}{*} \PY{n}{pi} \PY{o}{*} \PY{n}{t}\PY{p}{)} 
    \PY{k}{else}\PY{p}{:} \PY{k}{return} \PY{l+m+mi}{0}
\end{Verbatim}
\end{tcolorbox}

    \hypertarget{heath:problem-at-the-continuous-level}{%
\subsubsection{Problem at the continuous
level}\label{heat:problem-at-the-continuous-level}}

    We are now able to completely define the problem at the continuous
level.

    \begin{tcolorbox}[breakable, size=fbox, boxrule=1pt, pad at break*=1mm,colback=cellbackground, colframe=cellborder, enlarge top by=0.25em, enlarge bottom by=0.5em]
\begin{Verbatim}[commandchars=\\\{\}]
\PY{n}{H}\PY{o}{.}\PY{n}{Set\PYZus{}Rectangular\PYZus{}Domain}\PY{p}{(}\PY{n}{x0}\PY{p}{,} \PY{n}{xL}\PY{p}{,} \PY{n}{y0}\PY{p}{,} \PY{n}{yL}\PY{p}{)}\PY{p}{;}
\PY{n}{H}\PY{o}{.}\PY{n}{Set\PYZus{}Initial\PYZus{}Final\PYZus{}Time}\PY{p}{(}\PY{n}{initial\PYZus{}time}\PY{o}{=}\PY{n}{ti}\PY{p}{,} \PY{n}{final\PYZus{}time}\PY{o}{=}\PY{n}{tf}\PY{p}{)}\PY{p}{;}
\PY{n}{H}\PY{o}{.}\PY{n}{Set\PYZus{}Physical\PYZus{}Parameters}\PY{p}{(}\PY{n}{rho}\PY{o}{=}\PY{n}{rho}\PY{p}{,} \PY{n}{Lambda11}\PY{o}{=}\PY{n}{Lambda11}\PY{p}{,} \PY{n}{Lambda12}\PY{o}{=}\PY{n}{Lambda12}\PY{p}{,} \PY{n}{Lambda22}\PY{o}{=}\PY{n}{Lambda22}\PY{p}{,} \PY{n}{CV}\PY{o}{=}\PY{n}{CV}\PY{p}{)}\PY{p}{;}
\end{Verbatim}
\end{tcolorbox}

    \hypertarget{heath:problem-at-the-discrete-level-in-space-and-time}{%
\subsubsection{Problem at the discrete level in space and
time}\label{heat:problem-at-the-discrete-level-in-space-and-time}}

    The structure-preserving discretization of the
infinite-dimensional pHs with PFEM is
described in detail in \cite{SerMatHai19c}. This leads to the
pHDAE given in \eqref{eq:PFEMheat}. The definition of the system at the discrete level follows the same steps as for the two previous examples.

    \begin{tcolorbox}[breakable, size=fbox, boxrule=1pt, pad at break*=1mm,colback=cellbackground, colframe=cellborder, enlarge top by=0.25em, enlarge bottom by=0.5em]
\begin{Verbatim}[commandchars=\\\{\}]
\PY{n}{H}\PY{o}{.}\PY{n}{Set\PYZus{}Gmsh\PYZus{}Mesh}\PY{p}{(}\PY{n}{xmlfile}\PY{o}{=}\PY{l+s+s1}{\PYZsq{}}\PY{l+s+s1}{rectangle.xml}\PY{l+s+s1}{\PYZsq{}}\PY{p}{,} \PY{n}{rfn\PYZus{}num}\PY{o}{=}\PY{l+m+mi}{1}\PY{p}{)}\PY{p}{;}
\PY{n}{H}\PY{o}{.}\PY{n}{Set\PYZus{}Finite\PYZus{}Element\PYZus{}Spaces}\PY{p}{(}\PY{n}{family\PYZus{}q}\PY{o}{=}\PY{l+s+s1}{\PYZsq{}}\PY{l+s+s1}{RT}\PY{l+s+s1}{\PYZsq{}}\PY{p}{,} \PY{n}{family\PYZus{}p}\PY{o}{=}\PY{l+s+s1}{\PYZsq{}}\PY{l+s+s1}{P}\PY{l+s+s1}{\PYZsq{}}\PY{p}{,} \PY{n}{family\PYZus{}b}\PY{o}{=}\PY{l+s+s1}{\PYZsq{}}\PY{l+s+s1}{P}\PY{l+s+s1}{\PYZsq{}}\PY{p}{,}\PY{n}{rq}\PY{o}{=}\PY{l+m+mi}{0}\PY{p}{,} \PY{n}{rp}\PY{o}{=}\PY{l+m+mi}{1}\PY{p}{,} \PY{n}{rb}\PY{o}{=}\PY{l+m+mi}{1}\PY{p}{)}\PY{p}{;}
\PY{n}{H}\PY{o}{.}\PY{n}{Assembly}\PY{p}{(}\PY{p}{)}\PY{p}{;}
\end{Verbatim}
\end{tcolorbox}
To perform the time integration of the pHDAE, we first need to set
and interpolate the initial data and the boundary control function on
the appropriate finite element spaces. Then, the time step is specified.

\begin{tcolorbox}[breakable, size=fbox, boxrule=1pt, pad at break*=1mm,colback=cellbackground, colframe=cellborder, enlarge top by=0.25em, enlarge bottom by=0.5em]
\begin{Verbatim}[commandchars=\\\{\}]
\PY{n}{H}\PY{o}{.}\PY{n}{Set\PYZus{}Initial\PYZus{}Data}\PY{p}{(}\PY{n}{au\PYZus{}0}\PY{o}{=}\PY{n}{au\PYZus{}0}\PY{p}{,} \PY{n}{eu\PYZus{}0}\PY{o}{=}\PY{n}{eu\PYZus{}0}\PY{p}{,} \PY{n}{ampl}\PY{o}{=}\PY{n}{ampl}\PY{p}{,} \PY{n}{sX}\PY{o}{=}\PY{n}{sX}\PY{p}{,} \PY{n}{sY}\PY{o}{=}\PY{n}{sY}\PY{p}{,} \PY{n}{X0}\PY{o}{=}\PY{n}{X0}\PY{p}{,} \PY{n}{Y0}\PY{o}{=}\PY{n}{Y0}\PY{p}{)}\PY{p}{;}
\PY{n}{H}\PY{o}{.}\PY{n}{Project\PYZus{}Initial\PYZus{}Data}\PY{p}{(}\PY{p}{)}\PY{p}{;}
\PY{n}{H}\PY{o}{.}\PY{n}{Set\PYZus{}Boundary\PYZus{}Control}\PY{p}{(}\PY{n}{Ub\PYZus{}tm0}\PY{o}{=}\PY{n}{Ub\PYZus{}tm0}\PY{p}{,} \PY{n}{Ub\PYZus{}sp0}\PY{o}{=}\PY{n}{Ub\PYZus{}sp0}\PY{p}{,} \PY{n}{Ub\PYZus{}tm1}\PY{o}{=}\PY{k}{lambda} \PY{n}{t} \PY{p}{:}\PY{l+m+mi}{0}\PY{p}{,} \PY{n}{Ub\PYZus{}sp1}\PY{o}{=}\PY{l+s+s1}{\PYZsq{}}\PY{l+s+s1}{1000}\PY{l+s+s1}{\PYZsq{}}\PY{p}{)}\PY{p}{;}
\PY{n}{H}\PY{o}{.}\PY{n}{Project\PYZus{}Boundary\PYZus{}Control}\PY{p}{(}\PY{p}{)}\PY{p}{;}
\PY{n}{H}\PY{o}{.}\PY{n}{Set\PYZus{}Time\PYZus{}Setting}\PY{p}{(}\PY{n}{dt}\PY{p}{)}\PY{p}{;}
\end{Verbatim}
\end{tcolorbox}

    \hypertarget{numerical-approximation-of-the-space-time-solution}{%
\subsubsection{Numerical approximation of the space-time
solution}\label{heat:numerical-approximation-of-the-space-time-solution}}

    Now we perform the time integration of the resulting pHDAE system and
deduce the behaviour of the energy variables, the Hamiltonian with
respect to the time and space variables, respectively. For the time discretization, we employ a fully explicit scheme, presented in \cite{serh:20} (Algorithm 2 of Section 4.4) as a first attempt.

    \begin{tcolorbox}[breakable, size=fbox, boxrule=1pt, pad at break*=1mm,colback=cellbackground, colframe=cellborder, enlarge top by=0.25em, enlarge bottom by=0.5em]
\begin{Verbatim}[commandchars=\\\{\}]
\PY{n}{H}\PY{o}{.}\PY{n}{Set\PYZus{}Formulation}\PY{p}{(}\PY{l+s+s1}{\PYZsq{}}\PY{l+s+s1}{div}\PY{l+s+s1}{\PYZsq{}}\PY{p}{)}
\PY{n}{alpha_s}\PY{p}{,} \PY{n}{fS}\PY{p}{,} \PY{n}{fsig}\PY{p}{,} \PY{n}{es}\PY{p}{,} \PY{n}{eS}\PY{p}{,} \PY{n}{esig}\PY{p}{,} \PY{n}{Hamiltonian} \PY{o}{=} \PY{n}{H}\PY{o}{.}\PY{n}{Integration\PYZus{}DAE}\PY{p}{(}\PY{p}{)}\PY{p}{;}
\end{Verbatim}
\end{tcolorbox}

    \hypertarget{heath:post-processing}{%
\subsubsection{Post-processing}\label{heat:post-processing}}
    
\begin{figure}[!htbp]%
    \centering%
    \includegraphics[width=0.55\textwidth]{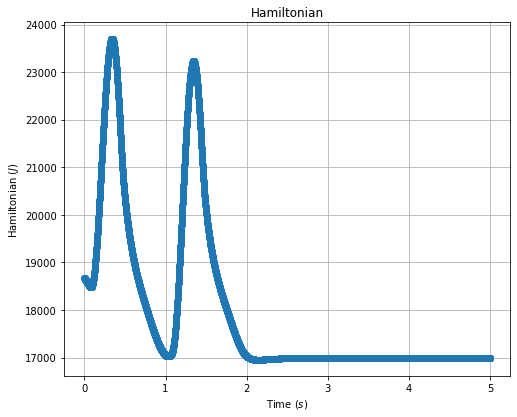}%
    \caption{Hamiltonian (internal energy) versus time for the heat equation.}%
    \label{fig:HamiltonianHeat}%
\end{figure}%
As an illustration, we plot the Hamiltonian function ({\it i.e.} the internal energy) versus time. 
The Hamiltonian function is constant after $2$ seconds, when the boundary control is switched off, as expected by the first law of thermodynamics.%

\section{Conclusions and perspectives}\label{sec:conclusions}

%
%

We have provided a general structure for the theoretical and numerical solution of infinite-dimensional port-Hamiltonian systems. This structure is particularly appealing since PFEM straightforwardly applies. Concerning the numerical solution, PFEM offers the advantage to leverage robust software components for the discretization of boundary controlled PDEs and time integration.

We have applied this strategy on abstract multidimensional linear hyperbolic and parabolic boundary controlled systems. We have notably shown that model problems based on the wave equation, Mindlin equation and heat equation fit within this unified theoretical framework.  Numerical simulations of infinite-dimensional pHs have been performed with the ongoing software project {\sc{SCRIMP}} that has been briefly introduced. Finally we have illustrated how to solve three case studies within this framework by carefully explaining the methodology, and have provided companion interactive {\sc Jupyter} notebooks.
%
%


Beside the generalization of the classes related to the heat and wave equation to the three-dimensional case, we plan to propose in {\sc{SCRIMP}} more advanced model problems based on the two-dimensional Shallow Water Equation (SWE) \cite{CarBruMatLef19,CarMatLef19}, the Kirchhoff model for thin plates \cite{brugnoli2019kirchhoff} and Maxwell's equations \cite{PayMatHai20}. Furthermore we will investigate both time integration methods that allow structure-preserving time discretization \cite{Kotyczka2018a} of finite dimensional pHs and more accurate time integrators for nonlinear pHDAE. In addition we plan to enrich the panel of structure-preserving model reduction algorithms to facilitate the simulation of large-scale port-Hamiltonian systems.  This is an essential prerequisite before first attempts related to control design.
Further developments foresee the comparisons with well-established algorithms for multi-physics problems leading to coupled systems of PDEs.
\section*{Acknowledgments}
This work is supported by the project ANR-16-CE92-0028,
entitled {\em Interconnected Infinite-Dimensional systems for Heterogeneous
Media}, INFIDHEM, financed by the French National Research Agency (ANR) and the Deutsche Forschungsgemeinschaft (DFG). Further information is available at {\url{https://websites.isae-supaero.fr/infidhem/the-project}}. \\
Moreover the authors would like to thank Michel Sala\"un and Denis Matignon for the fruitful and insightful discussions.

\bibliographystyle{plain}

\bibliography{main}

\newpage 

\end{document}